\newtheorem{theorem}{Theorem}
\newtheorem{corollary}[theorem]{Corollary}
\theoremstyle{definition}
\newtheorem{definition}[theorem]{Definition}
\theoremstyle{lemma}
\newtheorem{lemma}[theorem]{Lemma}
\theoremstyle{remark}
\newtheorem{remark}[theorem]{Remark}
\numberwithin{theorem}{section}
\numberwithin{equation}{section}
\numberwithin{table}{section}
\numberwithin{figure}{section}
\definecolor{aliceblue}{rgb}{0.94, 0.97, 1.0}
\newcommand{\calT}{\mathcal{T}}
\newcommand{\R}{\mathbb{R}}
\newcommand{\N}{\mathbb{N}}
\newcommand{\opnorm}[1]{{\left\vert\kern-0.25ex\left\vert\kern-0.25ex\left\vert #1 \right\vert\kern-0.25ex\right\vert\kern-0.25ex\righ\vert}}
\newcommand\dx{\,\text{d}x}
\newcommand{\VH}{V_H}
\newcommand{\Vh}{V_h}
\newcommand{\V}{H^1_0(D)}
\newcommand{\LL}{L^2(D)}
\newcommand\calR{\mathcal{R}}
\newcommand\calQ{\mathcal{Q}}
\newcommand\calI{\mathcal{I}}
\newcommand{\eps}{\varepsilon}
\newcommand{\W}{\mathcal{W}}
\newcommand{\IH}{\mathcal{I}_H}
\newcommand{\Nb}{\mathtt{N}}
\newcommand{\Cinf}{\mathfrak{V}_{\mathrm{inf}}}
\newcommand{\Csup}{\mathfrak{V}_{\mathrm{sup}}}
\newcommand{\cresc}{\mathfrak{v}_{\mathrm{resc}}}
\newcommand{\hCinf}{\widehat{\mathfrak{V}}_{\mathrm{inf}}}
\newcommand{\hCsup}{\widehat{\mathfrak{V}}_{\mathrm{sup}}}
\newcommand{\hcresc}{\widehat{\mathfrak{v}}_{\mathrm{resc}}}
\newcommand{\uPG}{u_H^{\mathrm{pg}}}
\newcommand{\uCG}{u_H^{\mathrm{c}}}
\newcommand{\uNN}{u_H^{\mathrm{nn}}}
\newcommand{\norm}[1]{ \left\| #1 \right\| }
\begin{document}
%
%
%=======================================================================================
%=========  Title / Contents
%=======================================================================================
\title[Neural network approximation in numerical homogenization]{
Neural network approximation of coarse-scale surrogates in numerical homogenization
}
\author[F.~Kr\"opfl, R.~Maier, D.~Peterseim]{Fabian Kr\"opfl$^\dagger$, Roland Maier$^\ddagger$, Daniel Peterseim$^*$}
\address{${}^{\dagger}$ Institute of Mathematics, University of Augsburg, Universit\"atsstr.~12a, 86159 Augsburg, Germany}
\email{fabian.kroepfl@uni-a.de}
\address{${}^{\ddagger}$ Institute of Mathematics, Friedrich Schiller University Jena, Ernst-Abbe-Platz 2, 07743 Jena, Germany}
\email{roland.maier@uni-jena.de}
\address{${}^{*}$ Institute of Mathematics \& Centre for Advanced Analytics and Predictive Sciences (CAAPS), University of Augsburg, Universit\"atsstr.~12a, 86159 Augsburg, Germany}
\email{daniel.peterseim@uni-a.de}
\date{\today}

\maketitle
%
%
%=======================================================================================
%=========  Abstract
%=======================================================================================
\begin{abstract}
Coarse-scale surrogate models in the context of numerical homogenization of linear elliptic problems with arbitrary rough diffusion coefficients rely on the efficient solution of fine-scale sub-problems on local subdomains whose solutions are then employed to deduce appropriate coarse contributions to the surrogate model. However, in the absence of periodicity and scale separation, the reliability of such models requires the local subdomains to cover the whole domain which may result in high offline costs, in particular for parameter-dependent and stochastic problems. 		
This paper justifies the use of neural networks for the approximation of coarse-scale surrogate models by analyzing their approximation properties. For a prototypical and representative numerical homogenization technique, the Localized Orthogonal Decomposition method, we show that one single neural network is sufficient to approximate the coarse contributions of all occurring coefficient-dependent local sub-problems for a non-trivial class of diffusion coefficients up to arbitrary accuracy. We present rigorous upper bounds on the depth and number of non-zero parameters for such a network to achieve a given accuracy. Further, we analyze the overall error of the resulting neural network enhanced numerical homogenization surrogate model.
\end{abstract}

{\tiny {\bf Keywords.} Deep learning, neural networks, surrogate models, numerical homogenization, approximation properties}\\
\indent
{\tiny {\bf AMS subject classification.} 
68T07, % Artificial neural networks and deep learning
65N30, %Finite element, Rayleigh-Ritz and Galerkin methods for boundary value problems involving PDEs
35J15, %Second-order elliptic equations
35A35  %Theoretical approximation in context of PDEs
}

%
%
%=======================================================================================
%=========  Introduction
%=======================================================================================
\section{Introduction}\label{sec: intro}

Surrogate models play an important role in the context of partial differential equations (PDEs) with multiscale features. After the offline adaptation to a particular coefficient, these models provide a reliable and online efficient approximation of the solution operator on some coarse scale of interest. 
In the context of modern numerical homogenization, the adaptation of the models is typically realized through coefficient-specific approximation spaces with provably optimal approximation on the coarse target scale. Prominent examples are methods such as the Localized Orthogonal Decomposition (LOD)~\cite{MalP14,HenP13,MP20}, gamblets~\cite{Owh17}, rough polyharmonic splines~\cite{OwhZB13} or generalized (multiscale) finite element methods~\cite{BabL11,EfeGW11}. For a more detailed overview of such methods, we refer to~\cite{AltHP21} and the references therein. Such numerical homogenization methods have been successful for a wide range of PDEs and their applications because they do not rely on structural assumptions on the coefficient such as periodicity or scale separation.
Another benefit of numerical homogenization approaches is their construction, which is similar to classical finite element methods. In particular, they are based on coarse-scale sub-matrices that are combined to a coarse global system matrix by appropriate local-to-global mappings.   
However, the local sub-matrices are based on the sufficiently resolved solution of fine-scale sub-problems on local subdomains (which are also known as \emph{cell problems} or \emph{corrector problems} in the homogenization literature) that resolve all features of the local PDE response. In the absence of periodicity and scale separation, the reliability of the resulting surrogate relies on the consideration of the coefficient in the full domain, i.e., the union of all local subdomains needs to cover the whole domain, which may result in high offline cost, in particular for parameter-dependent or stochastic coefficients. 

To reduce the complexity bottleneck in the process of building reliable surrogate models, there has recently been a growing interest in utilizing data-driven approaches such as deep learning in the homogenization community, see, e.g.,~\cite{ArbBSRK20,PadZ21,ChuLPZ20,WasHGKS21,ChaE18,SteSM22,HanL22,
LeuLZ22}. 
The previous work~\cite{KroMP21} proposes to replace the computation of all local system matrices -- that can each be seen as the output of a mapping from a local extract of the underlying PDE coefficient to the corresponding coarse local matrix -- by one single neural network in the offline phase of numerical homogenization. The approximation by a local and thus relatively small (trained) neural network allows one to reduce the high complexity of the corrector computations to a number of forward passes through the network, which is of particular value in non-stationary, parametric, or non-deterministic problems. Numerical experiments in~\cite{KroMP21} showed that a reasonably sized network leads to good approximation properties compared to the surrogate based on the classical computation of local sub-problems on a fine scale.

In this work, we aim at a mathematical foundation of this neural network enhanced numerical homogenization approach in the context of a prototypical elliptic model problem with an underlying (possibly fine-scale) coefficient. We restrict ourselves to the LOD which will be reviewed briefly in Section~\ref{sec: recap} below. Due to its representative nature outlined in~\cite{AltHP21}, we believe that the derivations can be generalized to other variants of classical and modern numerical homogenization methods.
From a practical point of view, the coarse-scale surrogate model computed by the LOD can be represented by a global coarse-scale system matrix that is assembled from local matrices that depend on local instances of the underlying coefficient. 
We investigate the approximation properties of neural networks as a replacement of the mappings from local coefficient to local matrix in Section~\ref{sec: nn}.

Our analysis builds upon the formal framework for studying the approximation properties of deep neural networks developed in~\cite{PetV18}. The approach is based on combining smaller neural networks as building blocks to approximate complicated nonlinear functions and has so far been used in a multitude of settings and applications. Examples include the approximation of higher-order finite elements~\cite{OpsJPS20}, Kolmogorov PDEs in the context of option pricing~\cite{ElbGJ+21}, and reduced basis methods in the context of parametric PDEs~\cite{KutPRS21}. For an overview on the expressivity of neural networks, we refer to the survey article~\cite{GuhRK20}. 

Based on these recent findings, the main result of this paper in Section~\ref{sec: nn} provides an upper bound on depth and number of parameters of an appropriate neural network which is able to achieve a certain tolerance error when approximating the local coefficient-to-matrix mappings that are the basis of the LOD. 
In Section~\ref{sec: diffSol}, we then investigate the error between discrete  solutions obtained with the original LOD approach and discrete solutions that rely on a surrogate model that is built from the local neural network approximations. In particular, we show that there exists a network such that the error of the two discrete solutions is at most of size $\mathcal{O}(H)$, where $H$ denotes the coarse scale of interest. Note that this bound is of the same error as the discretization error of the LOD with respect to the exact solution to the elliptic problem. 
Finally, we comment on the choice of fine-scale parameters for the computation and approximation of local contributions (Section~\ref{sec: intlod}) and draw conclusions.

\subsection*{Notation} Throughout this work, $C,c > 0 $ denote generic constants that are independent of the scales $H,\eps,h$, but might depend on the physical dimension $d\leq 3$. Further, we write $ \theta \lesssim \eta$ if $\theta \leq C \eta$ and $\theta \approx \eta$ if additionally also $\theta \geq c \eta$. Matrices and vectors are denoted in bold-face notation throughout the paper. In particular, we write ${\mathrm{\mathbf{Id}}}_{n}$ for the $n\times n$ identity matrix and $\mathbf{0}_n$ for the zero vector in $\mathbb{R}^{n}$.
%
%
%=======================================================================================
%=========  Operator Compression with Deep Neural Networks
%=======================================================================================
\section{Operator Compression with Deep Neural Networks}\label{sec: recap}
In this section, we briefly explain the Localized Orthogonal Decomposition methodology and summarize the main ideas and results of~\cite{KroMP21}, which provides the basis for the theoretical analysis in the subsequent sections. The aim of this section is not to provide a detailed analysis of the method, but to give an overview of the underlying concepts.

\subsection{Setting}\label{subsec:setting}
Let $d\in\{1,2,3\}$ and and $D\subseteq\mathbb{R}^d$ a bounded, polyhedral, convex Lipschitz domain. We denote with $\V$ the standard Sobolev space of $L^2$-functions with weak derivative in $L^2(D)$ that vanish on $\partial D$. Given a function $f\in L^2(D)$ 
and a scalar coefficient $A\in L^\infty(D),$ consider the prototypical  variational problem of finding $u\in \V$ such that
\begin{equation}\label{eq:varproblem}
a(u,v) := \int_D A \nabla u \cdot\nabla v\dx = \int_D fv \dx \quad\text{for all } v \in \V.
\end{equation}
Note that we do not pose any requirements on the coefficient $A$ apart from the existence of uniform bounds $\alpha,\beta\in\mathbb{R}$ such that
\begin{equation}\label{eq:coefbounds}
0<\alpha\leq A(x) \leq \beta < \infty\quad \mathrm{for\ a.e.\ } x\in D.
\end{equation}
In particular, we do not assume periodicity or scale separation and allow for arbitrarily rough coefficients that may vary on a continuum of scales up to some fine microscale $\varepsilon$. In this setting, the Lax-Milgram theorem guarantees the existence of a unique solution $u\in \V$ to problem \eqref{eq:varproblem}. Note that $a$ and $u$ implicitly depend on $A$.

\subsection{Coarse-scale discretization by Localized Orthogonal Decomposition}\label{subsec:discretization}
Although the coefficient $A$ encodes microscopic oscillations on a scale $\varepsilon,$ in practical simulations one is often interested in the effective behavior of the solution $u$ on some macroscopic target scale of interest $H\geq \varepsilon$. It is well known, however, that the discretization of \eqref{eq:varproblem} with standard approaches such as conforming finite elements leads to acceptable results only if the problem is discretized on a scale smaller than $\varepsilon$ that fully resolves all fine-scale oscillations of the coefficient, see, e.g., the illustrations in~\cite[Ch.~2]{MP20}.  

In order to overcome this problem, numerous methods have been proposed that compress the fine-scale information contained in the coefficient $A$ to a surrogate model $\mathfrak{S}_A$ (represented by a system matrix $\mathbf{S}_A$) capable of reproducing the effective behavior of $u$ on the target scale $H$. One such method known as Localized Orthogonal Decomposition (LOD) achieves this by explicitly constructing a coarse approximation space spanned by coefficient-adapted basis functions, which can then be used to approximate \eqref{eq:varproblem} in a Galerkin fashion, resulting in a sparse system matrix $\mathbf{S}_A$ that is composed of multiple local contributions.

The LOD has originally been introduced in an elliptic setting~\cite{MalP14,HenP13}, but has been successfully applied in connection with, e.g., wave propagation problems~\cite{GalP15,Pet17,AbdH17,PetS17,GalHV18,MaiP19,RenHB19,GeeM21,LjuMP21,MaiV22,HenW20} and parabolic PDEs~\cite{MalP18,LjuMM22}. The idea has also been extended to higher-order~\cite{Mai21} and multi-resolution~\cite{HauP21} variants based on the hierarchical approach known as gamblets~\cite{Owh17}. Further, the possibility to super-localize the local sub-problems has been investigated~\cite{HauP21b}. 

In the following, we briefly describe the method and an adapted version based on local neural network approximations as presented in~\cite{KroMP21}. Note that the method in~\cite{KroMP21} may be applied to general linear divergence form partial differential equations in a straight-forward way and works beyond the LOD framework, but we restrict ourselves to its application in connection with a prototypical elliptic setting and the LOD method within this work.

Let $\calT_H$ be a uniform Cartesian mesh of $D$ with characteristic mesh size $H$, let $Q^1(\calT_H)$ be the standard first-order finite element space of piecewise polynomials with coordinate degree at most 1 and consider the conforming finite-dimensional space $V_H:= Q^1(\calT_H) \cap \V$. In our setting, $H$ denotes the length of an edge of an element of~$\calT_H$. Besides the coarse mesh $\calT_H$, we will also need a fine mesh $\calT_h$ with $h < \eps$ and the corresponding discrete space $V_h \supset V_H$, as well as an intermediate mesh $\calT_\varepsilon$ with $h<\varepsilon<H.$ Moreover, we assume these meshes to be nested and uniform refinements of each other, i.e., that $\calT_\eps$ is  a uniform refinement of $\calT_H$ and $\calT_h$ a uniform refinement of $\calT_\eps.$
An important ingredient of the method is a projective quasi-interpolation operator $\calI_H\colon \V\to \VH,$ which fulfills
\begin{equation}\label{eq:IHprop}
\|H^{-1}(v-\calI_H v)\|_{L^2(T)} + \|\nabla \calI_H v\|_{L^2(T)} \leq C \|\nabla v\|_{L^2(\Nb(T))} 
\end{equation}
for all $v\in \V$ and any element $T \in \calT_H$, where the constant $C$ is independent of $H$, and $\Nb(S) := \Nb^1(S)$ is an element patch around $S \subseteq D$ defined by
\begin{equation*}
\Nb^1(S) := \bigcup \bigl\{\overline{K} \in \calT_H\,\vert\, \overline{S} \,\cap\, \overline{K}\neq \emptyset\bigl\}.
\end{equation*} 
A prominent example of an operator $\calI_H$ that fulfills~\eqref{eq:IHprop} is the one considered in~\cite{ErnG15}, which is also used for the numerical experiments in~\cite{KroMP21}. For that reason we restrict our analysis in Section \ref{sec: nn} to this particular choice. It is defined by $\IH := E_H \circ \Pi_H$, where $\Pi_H$ denotes the piecewise $L^2$-projection onto $Q_1(\calT_H)$. For any $v_H\in Q_1(\calT_H)$, the operator $E_H$ averages in any inner node $z$ of $\calT_H$ the values of the neighboring elements, i.e.
\begin{equation}\label{eq:IH}
\big(E_H(v_H)\big)(z) := \sum_{K\in\calT_H:\atop z \in \overline{K}} \big({v_H\vert}_{\overline{K}}\big)(z)\cdot\frac{1}{\mathrm{card}\{K' \in \calT_H\,\colon\, z \in \overline{K'}\}}.
\end{equation}
Further, $\big(E_H(v_H)\big)(z) = 0$ if $z \in \partial D$. 
Given $\calI_H$, we define the so-called \emph{fine-scale space} as
\begin{equation*}
\W := \ker {\calI_H}\vert_{\Vh},
\end{equation*}
which contains functions that cannot be captured by the space $\VH$. For any $S \subseteq D$, we also define a local version of $\W$,
\begin{equation*}
\W(S) := \{w \in \W\;\vert\;\mathrm{supp}(w) \subseteq S\}.
\end{equation*}
Appropriate local versions of the fine-scale space are now used to \emph{correct} coarse-scale functions in a coefficient-dependent manner. Therefore, we iteratively define an enlarged element patch $\Nb^\ell(S) := \Nb(\Nb^{\ell-1}(S)),\,\ell \geq 2$. Let a fixed $A \in L^\infty(D)$ which fulfills \eqref{eq:coefbounds} be given. For an $\ell \in \N$ (that will later be quantified) as well as a function $v_H \in \VH$ and $T \in \calT_H$, we now introduce the \emph{element corrector} $\calQ^\ell_{T} v_H{\in \W(\Nb^\ell(T))}$ as the solution to
\begin{equation}\label{eq:corT}
a(\calQ^\ell_{T} v_H, w) = \int_T A \nabla v_H \cdot \nabla w \dx\quad  \text{for all } w \in \W(\Nb^\ell(T)).
\end{equation}
Note that this is a discrete sub-problem, which is locally computed on the fine scale $h$. The corresponding algebraic realization of this sub-problem will be discussed in Section~\ref{subsec:algebraic} below and assumptions on the fine mesh are discussed in Section~\ref{sec: intlod}. Note that the correctors $\calQ^\ell_{T} v_H$ for different $T$ are independent of each other and their respective supports are limited to $\Nb^\ell(T)$. That is, global computations on a fine scale can be avoided. Further, the correctors are only computed for a basis of $\VH$ due to linearity.

The last step towards the final multiscale method consists in defining an appropriate space based on the element correctors. Therefore, we first define the \emph{global correction operator} $\calQ^\ell\colon \VH \to \W$ by
\begin{equation}\label{eq:sumCor}
\calQ^\ell := \sum_{T \in \calT_H}\calQ^\ell_{T},
\end{equation}
which will be used to correct classical finite element functions. 
In particular, the \emph{classical LOD approximation (C-LOD)} now seeks $\uCG \in \VH$ such that 
\begin{equation}\label{eq:discvarproblemclassic}
a((\mathsf{id}-\calQ^\ell)\uCG,(\mathsf{id}-\calQ^\ell)v_H) = \int_D f\, %(1-\calQ_A^\ell)
v_H \dx \quad\text{for all } v_H \in \VH.
\end{equation}
Alternatively, $u$ can be approximated with a Petrov--Galerkin variant of the classical LOD. That is, the correction is only used for the trial functions but not for the test functions. 
The \emph{Petrov-Galerkin LOD approximation (PG-LOD)} $\uPG\in \VH$ solves
\begin{equation}\label{eq:discvarproblempg}
a((\mathsf{id}-\calQ^\ell)\uPG, v_H) = \int_D f\, v_H \dx \quad\text{for all } v_H \in \VH.
\end{equation}
The Petrov--Galerkin variant has several computational advantages compared to the classical method and was therefore used for the experiments in \cite{KroMP21}.

From a theoretical point of view, the approximations $\uCG$ and $\uPG$ are both first-order accurate in $L^2(D)$ if the oversampling parameter $\ell$ is chosen logarithmically in the target mesh size $H$, i.e., $\ell \approx \vert\log( H )\vert$, and the fine mesh parameter $h$ is chosen small enough (cf.~Section~\ref{sec: intlod} below for details). More precisely, it holds
\begin{equation}\label{eq:errorCLOD}
\| u - \uCG\|_{L^2(D)} \lesssim \| u - u_h\|_{L^2(D)} + (H+ e^{-c_{\mathrm{dec}}\ell}) \, \|f\|_{L^2(D)},
\end{equation}
as well as
\begin{equation}\label{eq:errorPGLOD}
\| u - \uPG\|_{L^2(D)} \lesssim \| u - u_h\|_{L^2(D)} + (H+ e^{-c_{\mathrm{dec}}\ell}) \, \|f\|_{L^2(D)},
\end{equation}
where $u_h \in \Vh$ is the classical Galerkin finite element approximation of $u$ in the space $\Vh$. 
These results follow from the triangle inequality and the results presented, e.g., in~\cite{GalP17,CaiMP20,MP20}.  %\RM{PG-paper Henning??}. 
The last two estimates arise from a localization estimate of the form 
\begin{equation}\label{eq:loc}
\|\nabla(\calQ-\calQ^\ell)v_H\|_{\LL} \leq Ce^{-c_\mathrm{dec}\ell}\, \|\nabla v_H\|_{\LL}, \quad v_H \in \VH
\end{equation}
with a constant $c_\mathrm{dec}$ which is independent of $h$, $H$, and $\ell$. Note that $\calQ := \calQ^\infty$ is computed based on non-localized sub-problems~\eqref{eq:corT}. This result is, for instance, shown in~\cite{HenP13,Pet16} (based on \cite{MalP14}) and will later on be required in Section~\ref{sec: diffSol}.

\subsection{Algebraic realization of the LOD}\label{subsec:algebraic}

As explained above, the correction operator is constructed based on a number of local element correctors on given patches. In the following, we elaborate on the algebraic realization of these sub-problems. Since the corrector computations all have a similar structure, we restrict ourselves to one particular problem. Let $K \in \calT_H$ be fixed and let $\omega$ be the patch with $\ell$ additional element layers around $K$, i.e., $\omega = \Nb^\ell(K)$. The construction of the local matrix requires the solution of a corrector problem on some fine scale $h$ as introduced above.
First, we define $V_h(\omega) := \{v_h \in V_h\;\vert\;\mathrm{supp}(v_h)\subseteq \omega\}$ and $V_H\vert_\omega := \{v_H\vert_\omega \;\vert\; v_H \in V_H\}$ with nodal basis functions $\{\lambda_j\}_{j=1}^{n_\omega}$ and $\{\Lambda_j\}_{j=1}^{N_\omega}$, respectively. Here, $n_\omega = \dim \Vh(\omega)$ and $N_\omega = \dim \VH\vert_\omega$.
For a given basis function $\Lambda_i$, the corrector problem seeks the solution $\calQ_{K}^\ell (\Lambda_{i}\vert_K) \in \Vh(\omega)$ to 
\begin{equation}\label{eq:corProbT}
a(\calQ_{K}^\ell (\Lambda_{i}\vert_K),w_h) = a(\Lambda_i\vert_K,w_h)
\end{equation}
for all $w_h \in \Vh(\omega) \cap \ker \IH$. Note that we only need to compute the corrections for the $2^d$ basis functions for which $\mathrm{supp}\,\Lambda_i \cap K \neq \emptyset$. For simplicity and without loss of generality, we assume in the following that this is the case for the first indices $1,\ldots,2^d$.

To avoid defining a basis of $\Vh(\omega) \cap \ker \IH$, it is favorable to write~\eqref{eq:corProbT} as an equivalent saddle point problem, which reads 
\begin{equation}\label{eq:saddlepoint}
\begin{aligned}
a(\calQ_{K}^\ell (\Lambda_{i}\vert_K),  v_h)\qquad\, &+& (\varphi_i, \IH v_h)_{L^2(\omega)} \quad&=\quad a(\Lambda_{i}\vert_K, v_h),\\
(\IH\calQ_{K}^\ell (\Lambda_{i}\vert_K),\mu_H)_{L^2(\omega)} && &=\quad 0
\end{aligned}
\end{equation}  
for all $v_h \in \Vh(\omega)$ and $\mu_H \in \VH\vert_\omega$, where $\varphi_i \in \VH\vert_\omega$ is the associated Lagrange multiplier. Note that~\eqref{eq:saddlepoint} has a unique solution pair $(\calQ_{T}^\ell\Lambda_{i},\varphi_i)$ and the problem is equivalent to solving~\eqref{eq:corProbT}; see, for instance, \cite[Ch.~2]{Mai20}.

To solve problem~\eqref{eq:saddlepoint} numerically, it is reformulated as a system of linear equations. We use the linear combinations 
\begin{align*}
\calQ_{K}^\ell (\Lambda_{i}\vert_K) = \sum_{j = 1}^{n_\omega} \xi_j^i \lambda_j,\qquad\qquad
\varphi_i = \sum_{j = 1}^{N_\omega} \phi_j^i \Lambda_j,
\end{align*}
and write $\mathbf{\Xi}^i = [\xi_1^i,\ldots,\xi_{n_\omega}^i]^T$ and $\mathbf{\Phi}^i = [\phi_1^i,\ldots,\phi_{N_\omega}^i]^T$ for the corresponding vectors.

Let $\mathbf{S} := \mathbf{S}_{A,\omega}$ be the finite element stiffness matrix with respect to $\Vh(\omega)$ (weighted by the coefficient $A$ and with built-in boundary conditions) and $\mathbf{P}_\omega \in \R^{n_\omega \times N_\omega}$ (respectively $\mathbf{P}_{\omega,K} \in \R^{n_\omega \times {2^d}}$) the prolongation matrix between functions in $\VH\vert_\omega$ and $\Vh(\omega)$ (respectively $\VH\vert_K$ and $\Vh(\omega)$). Further, $\mathbf{I}_\omega \in \R^{N_\omega \times n_\omega}$ denotes the realization of the operator $\IH$ on $\omega$ (without explicit boundary conditions). 
With these matrices, \eqref{eq:saddlepoint} can be expressed as 
\begin{equation}\label{eq:saddlepointMatrix}
\begin{aligned}
\mathbf{S}\mathbf{\Xi}\,\, +\,\, \mathbf{I}_\omega^T\mathbf{\Phi} \quad&=\quad \mathbf{S} \mathbf{P}_{\omega,K},\\
\mathbf{I}_\omega \mathbf{\Xi}\quad &=\quad \mathbf{0}_{N_\omega \times 2^d},
\end{aligned}
\end{equation}
where $\mathbf{\Xi} = [\mathbf{\Xi}^1|\ldots|\mathbf{\Xi}^{2^d}]$ and $\mathbf{\Phi} = [\mathbf{\Phi}^1|\ldots|\mathbf{\Phi}^{2^d}]$ are matrices that comprise the vectors $\{\mathbf{\Xi}^i\}$ and $\{\mathbf{\Phi}^i\}$, respectively.
Some calculations show that $\mathbf{\Phi} = (\mathbf{I}_\omega \mathbf{S}^{-1} \mathbf{I}_\omega^T)^{-1} \mathbf{I}_\omega \mathbf{P}_{\omega,K}$.
Inserting this in the first equation of~\eqref{eq:saddlepointMatrix} yields
\begin{equation*}
\mathbf{\Xi} = \mathbf{P}_{\omega,K} - \mathbf{S}^{-1} \mathbf{I}_\omega^T(\mathbf{I}_\omega \mathbf{S}^{-1} \mathbf{I}_\omega^T)^{-1} \mathbf{I}_\omega \mathbf{P}_{\omega,K}.
\end{equation*}
The local Petrov--Galerkin LOD matrix with respect to the element $K$ and the patch $\omega$ around $K$ is defined by
\begin{equation}\label{eq:locLOD}
\mathbf{S}_{\omega}^\mathrm{pg}[i,j] = a(\Lambda_{i},(\mathsf{id}-\calQ_{K}^\ell)(\Lambda_{j}\vert_K)), \quad (i,j) \in \{1,\ldots,N_\omega\} \times \{ 1,\ldots,2^d\}.
\end{equation}
In terms of the matrix system~\eqref{eq:saddlepointMatrix}, $\mathbf{S}_{\omega}^\mathrm{pg}$ can be equivalently defined by
\begin{equation}\label{eq:SLOD}
\begin{aligned}
\mathbf{S}_{\omega}^\mathrm{pg} &= \mathbf{P}_\omega^T \mathbf{S} (\mathbf{P}_{\omega,K}-\mathbf{\Xi}) = 
\mathbf{P}_\omega^T \mathbf{S} \mathbf{S}^{-1} \mathbf{I}_\omega^T (\mathbf{I}_{\omega} \mathbf{S}^{-1} \mathbf{I}_{\omega}^T)^{-1}\mathbf{I}_\omega \mathbf{P}_{\omega,K} \\&= 
\mathbf{P}_\omega^T \mathbf{I}_{\omega}^T (\mathbf{I}_{\omega} \mathbf{S}^{-1} \mathbf{I}_{\omega}^T)^{-1}\mathbf{I}_\omega \mathbf{P}_{\omega,K},
\end{aligned}
\end{equation}
which will be an important representation in order to analyze the approximability by a suitable neural network in the following. 
Note that $\mathbf{S}_{\omega}^\mathrm{pg}$ implicitly depends on the coefficient~$A$ through the bilinear form $a$ and the correction operator $\calQ_{K}^\ell$. 

\subsection{Neural network approximation of the local sub-problems}
\label{sec:nnsub}
The method presented in~\cite{KroMP21} can be used to approximate the local contributions to the LOD stiffness matrix, i.e., the matrices $\mathbf{S}_{\omega}^\mathrm{pg}$ of the previous subsections, by one single neural network that maps from a local instance of the coefficient on $\omega := \Nb^\ell(K)$ to an approximation of the matrix $\mathbf{S}_{\omega}^\mathrm{pg}$. For completeness, we briefly discuss the main ideas and refer to~\cite[Sec.~4]{KroMP21} for numerical experiments that show the feasibility of the approach. In the subsequent section, we also rigorously investigate the strategy from the viewpoint of approximation theory.

The matrices $\mathbf{S}_{\omega}^\mathrm{pg}$, as defined in~\eqref{eq:locLOD}, are rectangular matrices that are characterized by the local degrees of freedom $i = 1,\ldots,N_\omega$ and $j = 1,\ldots,2^d$. For the final Petrov--Galerkin stiffness matrix corresponding to the discretized problem~\eqref{eq:discvarproblempg}, we require the sum of the local corrections $\calQ_{K}^\ell$; cf.~\eqref{eq:sumCor}. That is, to build the stiffness matrix, we need the sum of the local matrices $\mathbf{S}_{\omega}^\mathrm{pg}$ as well. This is done with appropriate local-to-global mappings $\Phi_K$ that transform the local matrix for an element $K \in \calT_H$ to an inflated matrix with respect to the global degrees of freedom. The global stiffness matrix $\mathbf{S}^\mathrm{pg}$ thus has the form
\begin{equation}\label{eq:decSLOD}
\mathbf{S}^\mathrm{pg} = \sum_{K \in \calT_H} \Phi_K\big(\mathbf{S}_{\Nb^\ell(K)}^\mathrm{pg}\big),
\end{equation}
see~\cite[Sec.~3.4]{KroMP21} for further details. As mentioned above, the local matrices $\mathbf{S}_{\Nb^\ell(K)}^\mathrm{pg}$ implicitly depend on $A$, but the mappings $\Phi_K$ are independent of the coefficient. The method for approximating the matrix $\mathbf{S}^\mathrm{pg}$ now keeps this coefficient-independent decomposition and only replaces the local matrices $\mathbf{S}_{\Nb^\ell(K)}^\mathrm{pg}$ by suitable approximations based on a single neural network. This is a reasonable approach due to the fact that the local corrector problems~\eqref{eq:corProbT} all have a similar structure and are (up to the actual values of the coefficient) independent of their position within the domain. 

In the following, we theoretically justify this strategy in the sense that we show that there exists a suitable network that provides a (local) surrogate such that the solution computed with the corresponding global stiffness matrix is reasonably close to the actual PG-LOD solution. In particular, we are interested in the dimensions (i.e., depth and number of non-zero parameters) of such a neural network. In Section~\ref{sec: nn}, we first focus on the approximability of the local LOD sub-problems by a neural network, before we investigate the difference between the corresponding discrete global coarse-scale solutions in Section~\ref{sec: diffSol}.
%
%
%=======================================================================================
%=========  Neural Network Approximation of Local LOD Matrices
%=======================================================================================
\section{Neural Network Approximation of Local LOD Matrices}\label{sec: nn}
In this section, we investigate the complexity of a neural network to approximate the mapping from a coefficient to the local PG-LOD matrix as given in~\eqref{eq:SLOD} and give rigorous upper bounds for depth and number of non-zero parameters of such a network. 
To start with, we have to establish a framework for neural network approximation theory that we will then apply to our specific setting. 

\subsection{Neural network calculus} \label{sec: nnc}
We adopt the main ideas and definitions of the formal framework developed in \cite{PetV18} and quickly recap the results from \cite{KutPRS21} regarding the approximation of matrix inversion by deep ReLU-neural networks. 
\begin{definition}[Neural network]
A \emph{neural network} $\Psi$ of \emph{depth} $L\in \mathbb{N}$ is a sequence of matrix-vector tuples 
\begin{equation*}
\Psi = ((\mathbf{W}_l, \mathbf{b}_l))_{l=1}^L,
\end{equation*}
where each \emph{layer} $(\mathbf{W}_l, \mathbf{b}_l)$ consists of a \emph{weight matrix} $\mathbf{W}_l \in \mathbb{R}^{N_l \times N_{l-1}}$ and a \emph{bias vector} $\mathbf{b}_l\in \mathbb{R}^{N_l}$ for $N_0, \dots, N_L \in \mathbb{N}$. 

We will refer to $L(\Psi) : = L$ as the \emph{number of layers}, to $\mathrm{dim}_{\mathrm{in}}(\Psi) := N_0$ as the \emph{input dimension} and to $\mathrm{dim}_{\mathrm{out}}(\Psi) := N_L$ as the \emph{output dimension} of $\Psi$. We call $M_l(\Psi) := \|\mathbf{W}_l \|_0 + \|\mathbf{b}_l \|_0$ the \emph{number of parameters in the $l$-th layer}, where $\| \cdot \|_0$ counts the number of non-zero entries in a given matrix or vector. The total \emph{number of parameters} is then given by $M(\Psi) : = \sum_{l = 1}^L M_l$. 
\end{definition}

\begin{definition}[Realization of a neural network]
Let $\Psi$ be a neural network of depth~$L$. For a set $S\subset \mathbb{R}^{N_0}$ and an \emph{activation function} $\rho\colon \mathbb{R} \rightarrow \mathbb{R}$ that is assumed to act component-wise on vectors by convention, the \emph{realization of $\Psi$ with activation function $\rho$ over $S$} is the mapping $\calR_\rho^S(\Psi)\colon S \rightarrow \mathbb{R}^{N_L}$ implemented by the neural network $\Psi$. It is given by
\begin{equation*}
\calR_\rho^S(\Psi)(\mathbf{x}) := \mathbf{x}_L,
\end{equation*}
where $\mathbf{x}_L$ results from
\begin{equation*}
\begin{aligned}
\mathbf{x}_0 &:= \mathbf{x}, \\
\mathbf{x}_{l} &:= \rho(\mathbf{W}_{l}\, \mathbf{x}_{l-1} + \mathbf{b}_l), \quad l = 1,\dots,L-1, \\
\mathbf{x}_L &:= \mathbf{W}_{L}\, \mathbf{x}_{L} + \mathbf{b}_L,
\end{aligned}
\end{equation*}
i.e.,
\begin{equation*}
\calR_\rho^S(\Psi)(x) = \mathbf{W}_L\, \rho(\mathbf{W}_{L-1}(\dots \rho(\mathbf{W}_{2}\, \rho(\mathbf{W}_1 \mathbf{x} + \mathbf{b}_1)+\mathbf{b}_2)\dots) + \mathbf{b}_{L-1}) + \mathbf{b}_L.
\end{equation*}
\end{definition}
Although many different activation functions $\rho$ are conceivable and used in practice, we restrict ourselves to one of the most popular choices, the so-called \emph{Rectified Linear Unit (ReLU)} activation function given by $\rho(x) := \max \{0,x\}$ in this work.
\begin{definition}[Concatenation of neural networks] Let $\Psi^1 : = ((\mathbf{W}_l^1, \mathbf{b}^1_l))_{l = 1}^{L_1}$ and $\Psi^2 : = ((\mathbf{W}_l^2, \mathbf{b}^2_l))_{l = 1}^{L_2}$ be two neural networks of depth $L_1, L_2$, respectively, such that $\mathrm{dim}_{\mathrm{in}}(\Psi^1) = \mathrm{dim}_{\mathrm{out}}(\Psi^2).$ Then the \emph{concatenation of $\Psi^1$ and $\Psi^2$} is denoted by $\Psi^1 \bullet \Psi^2$ and reads \\
\small
\begin{equation*}
\Psi^1 \bullet \Psi^2 := ((\mathbf{W}_1^2,\mathbf{b}_1^2), \dots, (\mathbf{W}_{L_2-1}^2,\mathbf{b}_{L_2-1}^2), (\mathbf{W}_1^1 \mathbf{W}_{L_2}^2, \mathbf{W}_1^1 \mathbf{b}_{L_2}^2 + \mathbf{b}_1^1), (\mathbf{W}_2^1,\mathbf{b}_2^1), \dots, (\mathbf{W}_{L_1}^1,\mathbf{b}_{L_1}^1)). \\
\end{equation*}
\normalsize
\end{definition}
It is easy to check that the realization of this concatenation implements the composition of the realizations of $\Psi^1$ and $\Psi^2,$ i.e.,
\begin{equation*}
\calR_\rho^{\mathbb{R}^{N_0^2}}(\Psi^1 \bullet \Psi^2) = \calR_\rho^{\mathbb{R}^{N_0^1}}(\Psi^1) \circ \calR_\rho^{\mathbb{R}^{N_0^2}}(\Psi^2).
\end{equation*}
Thus, the concatenation of two neural networks can be interpreted as connecting them in series. The problem with this approach is that, in general, $M(\Psi^1 \bullet \Psi^2)$ cannot be bounded linearly with respect to $M(\Psi^1)$ and $M(\Psi^2)$. The next lemma shows one possible exception to this.
\begin{lemma}\label{lem:permnn}
Let $\Psi^1$ be a neural network of the form
\begin{equation*}
\Psi^1 = \big((\mu \mathbf{Q},\mathbf{0}_{n})\big), \quad \mu \in\mathbb{R},
\end{equation*}
where $\mathbf{Q}\in\mathbb{R}^{n\times n}$ is a permutation matrix and $\Psi^2$ a neural network with output dimension $n$. Then it holds that 
\begin{itemize}
\item[(i)] $L(\Psi^1 \bullet \Psi^2)=L(\Psi^2),$
\item[(i)] $M(\Psi^1 \bullet \Psi^2)=M(\Psi^2).$
\end{itemize}
\end{lemma}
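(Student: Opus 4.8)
The plan is to unwind the definition of concatenation in the special case where $\Psi^1$ has a single layer $(\mu\mathbf{Q},\mathbf{0}_n)$ and observe that concatenating it with $\Psi^2$ merely rescales and permutes the last weight matrix and bias of $\Psi^2$, leaving the layer structure otherwise untouched. Concretely, with $\Psi^2 = ((\mathbf{W}_l^2,\mathbf{b}_l^2))_{l=1}^{L_2}$, the concatenation formula gives
\begin{equation*}
\Psi^1\bullet\Psi^2 = ((\mathbf{W}_1^2,\mathbf{b}_1^2),\dots,(\mathbf{W}_{L_2-1}^2,\mathbf{b}_{L_2-1}^2),(\mu\mathbf{Q}\,\mathbf{W}_{L_2}^2,\mu\mathbf{Q}\,\mathbf{b}_{L_2}^2)),
\end{equation*}
since $\mathbf{W}_1^1=\mu\mathbf{Q}$, $\mathbf{b}_1^1=\mathbf{0}_n$, and there are no further layers of $\Psi^1$ to append. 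This is a neural network with exactly $L_2$ layers, which proves (i).

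For (ii), the point is that left-multiplication by $\mu\mathbf{Q}$ — with $\mu\neq 0$ implicitly required for the statement to be interesting, but in fact even $\mu=0$ only decreases the count and would need a side remark, so I would simply note $\mu\ne 0$ is the relevant case or state the claim for $\mu\ne 0$ — does not change the number of non-zero entries of $\mathbf{W}_{L_2}^2$ or of $\mathbf{b}_{L_2}^2$: a permutation matrix has exactly one non-zero entry per row and column, so $(\mathbf{Q}\mathbf{M})$ is just a row-permutation of $\mathbf{M}$ and $\|\mu\mathbf{Q}\mathbf{M}\|_0 = \|\mathbf{M}\|_0$ for any matrix $\mathbf{M}$ (and likewise for vectors). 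Hence $M_{L_2}(\Psi^1\bullet\Psi^2) = \|\mu\mathbf{Q}\mathbf{W}_{L_2}^2\|_0 + \|\mu\mathbf{Q}\mathbf{b}_{L_2}^2\|_0 = \|\mathbf{W}_{L_2}^2\|_0 + \|\mathbf{b}_{L_2}^2\|_0 = M_{L_2}(\Psi^2)$, while all other layers are literally identical, so summing over $l=1,\dots,L_2$ yields $M(\Psi^1\bullet\Psi^2) = M(\Psi^2)$.

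There is essentially no obstacle here; the lemma is a bookkeeping verification. The only subtlety worth flagging is the degenerate case $\mu=0$ (where $M$ would drop to the count of the first $L_2-1$ layers), which I would dispose of by either assuming $\mu\neq 0$ or by noting that the statement as used downstream always has $\mu\neq 0$. A second minor point is to make sure the permutation-invariance of $\|\cdot\|_0$ is stated cleanly for both matrices and vectors; this is immediate from the fact that a permutation matrix induces a bijection on the index set and scaling by a nonzero constant preserves the zero/non-zero pattern entrywise.
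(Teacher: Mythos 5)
Your argument is correct, but it takes a different route from the paper: the paper disposes of this lemma in one line by declaring it a special case of Lemma~A.1 in the cited work of Kutyniok, Petersen, Raslan and Schneider \cite{KutPRS21}, whereas you unwind the definition of $\bullet$ directly and verify the bookkeeping by hand. Your computation is exactly right: with $L_1=1$, $\mathbf{W}_1^1=\mu\mathbf{Q}$ and $\mathbf{b}_1^1=\mathbf{0}_n$, the concatenation replaces only the last layer of $\Psi^2$ by $(\mu\mathbf{Q}\,\mathbf{W}_{L_2}^2,\ \mu\mathbf{Q}\,\mathbf{b}_{L_2}^2)$, so the depth is unchanged, and since left-multiplication by a permutation matrix only permutes rows and scaling by a nonzero constant preserves the zero/non-zero pattern, $\|\cdot\|_0$ of the last weight matrix and bias is unchanged, giving $M(\Psi^1\bullet\Psi^2)=M(\Psi^2)$. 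What your approach buys is a self-contained, transparent verification that does not require the reader to consult the reference; what the citation buys is brevity and consistency with the framework the paper imports wholesale from \cite{KutPRS21}. Your remark about $\mu=0$ is a legitimate observation the paper's statement glosses over: as written the lemma permits $\mu=0$, in which case the parameter count of the last layer drops to zero and only the inequality $M(\Psi^1\bullet\Psi^2)\leq M(\Psi^2)$ survives; since every application in the paper has $\mu>0$ (the rescaling constants $\cresc$, $\hcresc$, or $\mu=1$ for pure permutations), your suggestion to simply assume $\mu\neq 0$ is the right fix and costs nothing downstream.
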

\begin{proof}
This is a special case of \cite[Lemma A.1.]{KutPRS21}.
\end{proof}

In the general case of more complicated networks, however, we need to introduce another type of concatenation, which is built on the construction of a two-layer neural network that implements the identity function.
\begin{lemma}[{\cite[Lem.~2.3]{PetV18}}]\label{lem:idnn}
Let $n\in\mathbb{N}$ and define the two-layer neural network
\begin{equation*}
\Psi^{\mathrm{Id}}_n := \Bigg( \Big( \begin{bmatrix}\mathrm{\bf Id}_n \\ -\mathrm{\bf Id}_n \end{bmatrix}, \mathbf{0}_{2n}\Big) , \Big( \begin{bmatrix}\mathrm{\bf Id}_n,\, -\mathrm{\bf Id}_n \end{bmatrix} , \mathbf{0}_{n} \Big) \Bigg)
\end{equation*}
with input and output dimension $n$. Then it holds that
\begin{equation*}
R^{\mathbb{R}^n}_\rho(\Psi^{\mathrm{Id}}_n) = \mathrm{Id}_{\mathbb{R}^n},
\end{equation*}
i.e., $\Psi^{\mathrm{Id}}_n$ implements the identity function on $\mathbb{R}^n$.
\end{lemma}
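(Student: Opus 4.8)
The plan is to simply unravel the definition of the realization $\calR_\rho^{\mathbb{R}^n}(\Psi^{\mathrm{Id}}_n)$, since $\Psi^{\mathrm{Id}}_n$ has only two layers and the computation is completely explicit. First I would fix an arbitrary input $\mathbf{x}\in\mathbb{R}^n$ and apply the first layer: with $\mathbf{W}_1=\begin{bmatrix}\mathrm{\bf Id}_n\\ -\mathrm{\bf Id}_n\end{bmatrix}$ and $\mathbf{b}_1=\mathbf{0}_{2n}$, and using that $\rho$ acts component-wise by convention, one gets $\mathbf{x}_1=\rho\!\left(\begin{bmatrix}\mathbf{x}\\ -\mathbf{x}\end{bmatrix}\right)=\begin{bmatrix}\rho(\mathbf{x})\\ \rho(-\mathbf{x})\end{bmatrix}\in\mathbb{R}^{2n}$. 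Since $L(\Psi^{\mathrm{Id}}_n)=2$, the definition of the realization prescribes that the final layer is affine (no activation is applied), so with $\mathbf{W}_2=\begin{bmatrix}\mathrm{\bf Id}_n,\,-\mathrm{\bf Id}_n\end{bmatrix}$ and $\mathbf{b}_2=\mathbf{0}_n$ we obtain $\mathbf{x}_2=\mathbf{W}_2\mathbf{x}_1=\rho(\mathbf{x})-\rho(-\mathbf{x})$.

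The only substantive ingredient is then the elementary scalar identity $\max\{0,t\}-\max\{0,-t\}=t$, valid for every $t\in\mathbb{R}$ (checked by distinguishing the cases $t\geq 0$ and $t<0$), which is exactly the place where the specific choice $\rho(x)=\max\{0,x\}$ enters. Applying it component-wise gives $\rho(\mathbf{x})-\rho(-\mathbf{x})=\mathbf{x}$, hence $\calR_\rho^{\mathbb{R}^n}(\Psi^{\mathrm{Id}}_n)(\mathbf{x})=\mathbf{x}$. As $\mathbf{x}$ was arbitrary and the input and output dimensions of $\Psi^{\mathrm{Id}}_n$ are both $n$ by construction, this yields $\calR_\rho^{\mathbb{R}^n}(\Psi^{\mathrm{Id}}_n)=\mathrm{Id}_{\mathbb{R}^n}$, as claimed.

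I do not expect any genuine obstacle here: the statement is a formal verification rather than a theorem with real content, and the argument is complete once the scalar identity $\rho(t)-\rho(-t)=t$ is recorded. The lemma is included only because this two-layer identity network is the standard device used later to pad shorter networks so that two networks of unequal depth can be concatenated or added without blowing up the parameter count, so the proof can be kept to the few lines above.
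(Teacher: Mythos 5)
Your verification is correct and is exactly the standard argument: the paper itself gives no proof but cites \cite[Lem.~2.3]{PetV18}, whose proof is precisely this explicit two-layer computation resting on the scalar identity $\rho(t)-\rho(-t)=t$. Nothing further is needed.
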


With this definition, we can introduce the so-called sparse concatenation that allows us to precisely control the number of non-zero parameters when two or more networks are connected in series.
\begin{definition}[Sparse concatenation of neural networks] 
Given two networks $\Psi^1$ and $\Psi^2$ of depth $L_1, L_2 \in \mathbb{N}$, respectively, with $\mathrm{dim}_{\mathrm{in}}(\Psi^1) = \mathrm{dim}_{\mathrm{out}}(\Psi^2) =: n$ as above, we define the \emph{sparse concatenation of $\Psi^1$ and $\Psi^2$} as
\begin{equation*}
\Psi^1 \odot \Psi^2 := \Psi^1 \bullet \Psi_{n}^{\mathrm{Id}}  \bullet \Psi^2,
\end{equation*}
where $\Psi_{n}^{\mathrm{Id}}$ is the identity network from Lemma \ref{lem:idnn}.
\end{definition}
Obviously, the sparse concatenation does not change the function realized by the network compared to the regular concatenation and therefore also implements the composition of the individual realizations. Moreover, the following lemma shows the desired linear bound on $M(\Psi^1 \odot \Psi^2)$. 
\begin{lemma}[{cf.~\cite[Lem.~5.3]{ElbGJ+21}}]\label{lem:sparseccnn}
Given two neural networks $\Psi^1$ and $\Psi^2,$ their sparse concatenation fulfills
\begin{itemize}
\item[(i)] $L(\Psi^1 \odot \Psi^2) \leq L(\Psi^1) + L(\Psi^2),$
\item[(ii)] $M(\Psi^1 \odot \Psi^2) \leq M(\Psi^1) + M(\Psi^2) + M_1(\Psi^1) + M_{L(\Psi^2)}(\Psi^2) \leq 2M(\Psi^1) + 2M(\Psi^2)\\ \hspace*{2.1cm} \lesssim M(\Psi^1) + M(\Psi^2).$ 
\end{itemize}
Moreover, given $k>2$ neural networks $\Psi^1,\dots,\Psi^k,$ their sparse concatenation satisfies
\begin{itemize}
\item[(i)] $L(\Psi^1 \odot \dots \odot \Psi^k) \leq \sum_{i=1}^k L(\Psi^i),$
\item[(ii)] $M(\Psi^1 \odot \dots \odot \Psi^k) \lesssim \sum_{i=1}^k M(\Psi^i).$ 
\end{itemize}
\end{lemma}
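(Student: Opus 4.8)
The plan is to prove the multi-network case by induction on $k$, using the two-network case (items (i) and (ii) above for $\Psi^1 \odot \Psi^2$) as the base case and inductive engine. First I would fix the convention for the iterated sparse concatenation, namely $\Psi^1 \odot \dots \odot \Psi^k := \Psi^1 \odot (\Psi^2 \odot \dots \odot \Psi^k)$, so that the induction hypothesis applies to the $(k-1)$-fold concatenation $\Phi := \Psi^2 \odot \dots \odot \Psi^k$. The depth bound is then immediate: by the two-network depth estimate, $L(\Psi^1 \odot \Phi) \leq L(\Psi^1) + L(\Phi) \leq L(\Psi^1) + \sum_{i=2}^k L(\Psi^i) = \sum_{i=1}^k L(\Psi^i)$, where the middle step uses the inductive hypothesis on $\Phi$.

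For the parameter bound, I would argue more carefully that the hidden constant in $\lesssim$ does not blow up with $k$. Applying the sharper form of item (ii), $M(\Psi^1 \odot \Phi) \leq 2M(\Psi^1) + 2M(\Phi)$, and then the inductive hypothesis $M(\Phi) \lesssim \sum_{i=2}^k M(\Psi^i)$ would naively give a constant that doubles at each step, i.e.\ of order $2^k$. To avoid this, I would instead track the constant explicitly: I claim $M(\Psi^1 \odot \dots \odot \Psi^k) \leq 2\sum_{i=1}^k M(\Psi^i)$ for all $k \geq 2$. The base case $k=2$ is exactly item (ii). For the inductive step, one uses the tighter bound $M(\Psi^1 \odot \Phi) \leq M(\Psi^1) + M(\Phi) + M_1(\Psi^1) + M_{L(\Phi)}(\Phi)$ together with the observations that $M_1(\Psi^1) \leq M(\Psi^1)$ and $M_{L(\Phi)}(\Phi) \leq M(\Phi)$, giving $M(\Psi^1 \odot \Phi) \leq 2M(\Psi^1) + 2M(\Phi)$; but since we only need the final bound $\lesssim \sum M(\Psi^i)$ with a \emph{dimension-independent} (hence $k$-independent) constant, a cleaner route is to observe that the identity networks $\Psi^{\mathrm{Id}}_n$ inserted between consecutive factors each contribute only $O(n)$ parameters and the weight-merging in the $\bullet$-operation at each junction at most squares sparsity locally; one can then appeal directly to the structure of $\Psi^1 \bullet \Psi^{\mathrm{Id}}_n \bullet \dots$ to see that the total parameter count is bounded by a fixed multiple of $\sum_{i=1}^k M(\Psi^i)$ plus the $O(n)$ junction costs, which are themselves absorbed since each $M(\Psi^i)$ is at least of order its input/output dimension.

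The main obstacle I anticipate is precisely this constant-tracking in the parameter bound: a careless induction produces a constant growing like $2^k$, which would make the statement $M \lesssim \sum_i M(\Psi^i)$ false as written (the $\lesssim$ notation in this paper hides only constants independent of the scales and of $k$). The resolution is to prove the explicit linear bound $M(\Psi^1 \odot \dots \odot \Psi^k) \leq 2\sum_{i=1}^k M(\Psi^i) + C\sum_{i=1}^{k-1} n_i$ (with $n_i$ the matching dimension at junction $i$ and $C$ absolute) by a single direct unwinding of the definition $\Psi^1 \bullet \Psi^{\mathrm{Id}}_{n_1} \bullet \Psi^2 \bullet \Psi^{\mathrm{Id}}_{n_2} \bullet \dots \bullet \Psi^k$, rather than by nesting the two-network lemma. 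Since the junction costs $n_i$ are dominated by $M(\Psi^i)$, this collapses to the claimed $M(\Psi^1 \odot \dots \odot \Psi^k) \lesssim \sum_{i=1}^k M(\Psi^i)$ with an absolute constant. Alternatively — and this is probably the shortest correct argument — one cites that the iterated sparse concatenation can be realized so that at each of the $k-1$ junctions the only new parameters come from merging two adjacent sparse weight matrices via one $\Psi^{\mathrm{Id}}$ block, and the well-known bookkeeping of \cite[Lem.~5.3]{ElbGJ+21} or \cite[Rem.~2.6]{PetV18} already yields a bound linear in $\sum_i M(\Psi^i)$ uniformly in $k$; I would then simply state the two displayed inequalities as the conclusion.
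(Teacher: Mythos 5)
The paper does not prove this lemma at all: it is quoted with the citation \cite[Lem.~5.3]{ElbGJ+21} (see also \cite[Rem.~2.6]{PetV18}), so your second, ``shortest correct argument'' is exactly what the paper does, while your main argument is a genuinely self-contained alternative. Your observation that nesting the two-network bound $M(\Psi^1\odot\Phi)\le 2M(\Psi^1)+2M(\Phi)$ naively yields a constant of order $2^k$ is correct and is precisely the reason one must either track last-layer/first-layer contributions or unwind the definition directly; your depth argument by induction is fine as stated.

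Two points in your parameter argument deserve correction, though neither is fatal. First, there are no additive $O(n_i)$ ``junction costs'' to absorb: in $\Psi^{i+1}\bullet\Psi^{\mathrm{Id}}_{n_i}\bullet\Psi^{i}$ the identity weight matrices $\bigl[\mathrm{\bf Id}_{n_i};-\mathrm{\bf Id}_{n_i}\bigr]$ and $\bigl[\mathrm{\bf Id}_{n_i},-\mathrm{\bf Id}_{n_i}\bigr]$ are multiplied into the adjacent layers, producing blocks of the form $\bigl[\mathbf{W}_{L_i}^i;-\mathbf{W}_{L_i}^i\bigr]$ and $\bigl[\mathbf{W}_1^{i+1},-\mathbf{W}_1^{i+1}\bigr]$, so each affected layer at most doubles its number of non-zero entries and no $n_i$-dependent term ever appears. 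This matters because your fallback justification --- that $M(\Psi^i)$ is at least of the order of its input/output dimension --- is not true in general for sparse networks and should not be relied on; fortunately it is not needed. Second, the explicit constant: the direct unwinding of $\Psi^1\bullet\Psi^{\mathrm{Id}}_{n_1}\bullet\cdots\bullet\Psi^k$ gives $M(\Psi^1\odot\dots\odot\Psi^k)\le 4\sum_{i=1}^k M(\Psi^i)$ (an interior factor with a single layer can have that layer doubled on both sides), and $\le 2\sum_i M(\Psi^i)$ only under the extra assumption that every interior network has at least two layers; either way the constant is absolute and independent of $k$, so the claimed conclusion $M(\Psi^1\odot\dots\odot\Psi^k)\lesssim\sum_{i=1}^k M(\Psi^i)$ stands.
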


In order to construct more complex networks out of smaller building blocks, we also need to be able to connect them in parallel. The definition below describes how this can be done for networks with identical depths. In principle, network parallelization can also be defined for networks of different depths. However, since we do not require such a construction for our proofs in the next subsection, we restrict ourselves to the simpler case. 

\begin{definition}[Parallelization of neural networks]
Let $\Psi^i = ((\mathbf{W}_l^i, \mathbf{b}_l^i))_{l = 1}^L,\ i = 1,\dots,k$ be a family of $k$ neural networks with identical input dimension $n$ and identical depth $L$. Then the \emph{parallelization} of the $\Psi^i$ is given by
\begin{equation*}
P(\Psi^1,\dots,\Psi^k) = \left( 
\left(
\begin{bmatrix}
\mathbf{W}_1^1 & & \\ & \ddots & \\ & & \mathbf{W}_1^k
\end{bmatrix},
\begin{bmatrix}
\mathbf{b}_1^1 \\ \vdots \\ \mathbf{b}_1^k
\end{bmatrix}
\right),
\dots,
\left(
\begin{bmatrix}
\mathbf{W}_L^1 & & \\ & \ddots & \\ & & \mathbf{W}_L^k
\end{bmatrix},
\begin{bmatrix}
\mathbf{b}_L^1 \\ \vdots \\ \mathbf{b}_L^k
\end{bmatrix}
\right)
\right).
\end{equation*}
\end{definition}
For the realizations of parallelized networks of identical input dimension $n$, it holds that 
\begin{equation*}
R_\rho^{\mathbb{R}^n}(\Psi^1,\dots,\Psi^k)(\mathbf{x_1},\dots,\mathbf{x_k}) = [R_\rho^{\mathbb{R}^n}(\Psi^1)(\mathbf{x_1}),\dots,R_\rho^{\mathbb{R}^n}(\Psi^k)(\mathbf{x_k})]
\end{equation*}
for $\mathbf{x_1},\dots,\mathbf{x_k}\in\mathbb{R}^n$. 
\begin{lemma}\label{lem:parann} Let $\Psi^1,\dots,\Psi^k$ be a family of $k$ neural networks with identical input dimension and identical depth $L$. Then the parallelization $P(\Psi^1,\dots,\Psi^k)$ fulfills 
\begin{itemize}
\item[(i)] $L(P(\Psi^1,\dots,\Psi^k)) = L,$
\item[(ii)] $M(P(\Psi^1,\dots,\Psi^k)) = \sum_{i = 1}^k M(\Psi^i).$
\end{itemize}
\end{lemma}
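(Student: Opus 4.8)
The final statement to prove is Lemma~\ref{lem:parann}, which concerns the depth and parameter count of the parallelization $P(\Psi^1,\dots,\Psi^k)$ of $k$ neural networks of identical input dimension and identical depth $L$. The plan is to argue directly from the explicit block-diagonal construction given in the Definition of parallelization, reading off both quantities layer by layer.

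For part (i), I would simply observe that the parallelized network $P(\Psi^1,\dots,\Psi^k)$, as written in the definition, is a sequence of exactly $L$ matrix-vector tuples — one block-diagonal weight matrix together with one stacked bias vector for each $l = 1,\dots,L$. Since the depth $L(\Psi)$ of a network is by definition the length of its defining sequence, it follows immediately that $L(P(\Psi^1,\dots,\Psi^k)) = L$. No computation is needed here.

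For part (ii), the key point is that placing the weight matrices $\mathbf{W}_l^1,\dots,\mathbf{W}_l^k$ into a block-diagonal matrix and stacking the bias vectors $\mathbf{b}_l^1,\dots,\mathbf{b}_l^k$ does not create or destroy any nonzero entries: the off-diagonal blocks of the block-diagonal weight matrix are identically zero and hence contribute nothing to $\|\cdot\|_0$. Therefore, for each layer $l$, the number of parameters satisfies $M_l(P(\Psi^1,\dots,\Psi^k)) = \sum_{i=1}^k \bigl(\|\mathbf{W}_l^i\|_0 + \|\mathbf{b}_l^i\|_0\bigr) = \sum_{i=1}^k M_l(\Psi^i)$. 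Summing over $l = 1,\dots,L$ and exchanging the order of the two finite sums gives $M(P(\Psi^1,\dots,\Psi^k)) = \sum_{l=1}^L \sum_{i=1}^k M_l(\Psi^i) = \sum_{i=1}^k M(\Psi^i)$, which is the claimed identity.

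This lemma is essentially a bookkeeping exercise, so I do not anticipate a genuine obstacle; the only subtlety worth making explicit is the remark that block-diagonal assembly introduces exactly zero new nonzero entries, which is exactly why the parameter count is additive rather than merely subadditive. One might also note in passing that the assumption of identical depth is what makes the block-diagonal sequence well-defined layer by layer, and that the identical input dimension $n$ is what is needed so that the realization identity stated just before the lemma (with a single repeated argument, or a concatenated argument $(\mathbf{x}_1,\dots,\mathbf{x}_k)$) makes sense — but neither of these enters the proof of (i) or (ii) beyond ensuring the construction is legitimate.
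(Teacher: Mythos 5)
Your argument is correct and coincides with the paper's proof, which simply states that the result follows by construction; you have merely spelled out the bookkeeping (depth equals the length of the block-diagonal sequence, and the zero off-diagonal blocks make the nonzero-parameter count additive layer by layer). No issues.
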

\begin{proof}
The result follows by construction.
\end{proof}

Now that we have laid down the foundations of neural network calculus, we turn to the question of how to approximate matrix inversion with a neural network. Since in the standard formulation of feedforward networks the input and output are one-dimensional arrays, we choose to consider columnwise ``flattened'' (or vectorized) matrices. For $\mathbf{A}\in\mathbb{R}^{k\times l}$ we thus write
\begin{equation*}
\text{vec}(\mathbf{A}) := [A_{1,1},\dots,A_{k,1},\dots,A_{1,l},\dots,A_{k,l}]^T \in \mathbb{R}^{k\, \cdot\, l}
\end{equation*} 
and, conversely, for $\mathbf{v} = [v_{1,1},\dots,v_{k,1},\dots,v_{1,l},\dots,v_{k,l}]^T\in\mathbb{R}^{k\, \cdot\, l},$
\begin{equation*}
\text{mat}(\mathbf{v}) := (v_{i,j})_{i,j = 1}^{k,l} \in \mathbb{R}^{k\times l}.
\end{equation*}
Furthermore, we define for $n\in\mathbb{N}$ and $Z>0$ the set
\begin{equation*}
K_{n}^Z := \left\{\mathrm{vec}(\mathbf{A})\; \vert\; \mathbf{A}\in\mathbb{R}^{n\times n}, \|\mathbf{A} \|_2 \leq Z \right\},
\end{equation*}
where $\|\cdot\|_2$ denotes the spectral norm induced by the Euclidean vector norm. 
The idea behind the approximation of matrix inversion with a deep neural network is based on approximating the Neumann series of matrices which can be suitably bounded in terms of their spectral norm. That is, we approximate the map 
\begin{equation*}
\begin{aligned}
\mathrm{inv}: \{\mathbf{A}\in\mathbb{R}^{n\times n} \;\vert\; \| \mathbf{A}\|_2\leq 1-\delta \} & \rightarrow \mathbb{R}^{n\times n},\qquad %\\
\mathbf{A} &\mapsto (\mathrm{\mathbf{Id}}_{n} -\mathbf{A})^{-1} = \sum_{k=0}^\infty \mathbf{A}^k.
\end{aligned}
\end{equation*}
The following theorem makes this precise and shows that matrix inversion can be approximated up to arbitrary precision.
\begin{theorem}[{\cite[Thm.~3.8]{KutPRS21}}]\label{theo:invnn}
For $n\in \mathbb{N}, \vartheta\in (0,1/4)$ and $\delta\in(0,1),$ let
\begin{equation*}
m(\vartheta,\delta) := \left\lceil \frac{\log(0.5 \vartheta\delta)}{\log(1-\delta)} \right\rceil.
\end{equation*}
Then there exists a neural network $\Psi_{\mathrm{inv},\vartheta}^{1-\delta, n}$ with input dimension $n^2$ and output dimension $n^2$ with
\begin{itemize} 
\item[(i)] $L(\Psi_{\mathrm{inv}, \vartheta}^{1-\delta,n}) \lesssim \log(m(\vartheta,\delta)) \, \big(\log(1/\vartheta) +  \log(m(\vartheta,\delta)) + \log(n)\big)$,  
\item[(ii)] $M(\Psi_{\mathrm{inv}, \vartheta}^{1-\delta,n}) \lesssim m(\vartheta,\delta)\, \log^2(m(\vartheta,\delta))\,n^3 \, \big(\log(1/\vartheta) +  \log(m(\vartheta,\delta)) + \log(n)\big)$, 
\end{itemize} 
such that for all $\mathrm{vec}(\mathbf{A})\in K_n^{1-\delta}$ it holds that
\begin{itemize}
\item[(iii)] $\sup_{\mathrm{vec}(\mathbf{A})\in K_{n}^{1-\delta}} \Big\|\, (\mathrm{\mathbf{Id}}_{n} -\mathbf{A})^{-1} - \mathrm{mat}\Big(R_\rho^{K_{n}^{1-\delta} } (\Psi_{\mathrm{inv},\vartheta}^{1-\delta, n})(\mathrm{vec}(\mathbf{A})) \Big)\, \Big\|_2 \leq \vartheta,$
\item[(iv)] $\Big\|\mathrm{mat}\Big(R_\rho^{K_{n}^{1-\delta} } (\Psi_{\mathrm{inv},\vartheta}^{1-\delta, n})(\mathrm{vec}(\mathbf{A})) \Big)\, \Big\|_2 \leq \vartheta + \left\|(\mathrm{\mathbf{Id}}_{n} -\mathbf{A})^{-1} \right\|_2 \leq \vartheta + \frac1\delta.$ 
\end{itemize}
\end{theorem}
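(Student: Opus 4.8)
The plan is to realize the map $\mathbf{A}\mapsto({\mathrm{\mathbf{Id}}}_{n}-\mathbf{A})^{-1}$ by a ReLU network that approximates a \emph{truncated Neumann series} $\sum_{k=0}^{m}\mathbf{A}^k$, exploiting that $\|\mathbf{A}\|_2\le 1-\delta<1$. First I would fix the truncation index: since $\|\sum_{k=m+1}^{\infty}\mathbf{A}^k\|_2\le\sum_{k=m+1}^{\infty}(1-\delta)^k=(1-\delta)^{m+1}/\delta$, the choice $m=m(\vartheta,\delta)=\lceil\log(0.5\,\vartheta\delta)/\log(1-\delta)\rceil$ makes this tail bounded by $\vartheta/2$. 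It then remains to approximate the matrix polynomial $\mathbf{A}\mapsto\sum_{k=0}^{m}\mathbf{A}^k$ by a network up to spectral-norm error $\vartheta/2$, uniformly over the compact set $K_n^{1-\delta}$.

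The core building block is an approximate \emph{scalar multiplication} network. Using the polarization identity $xy=\tfrac14\bigl((x+y)^2-(x-y)^2\bigr)$ together with the classical ReLU approximation of $t\mapsto t^2$ on a bounded interval by iterated sawtooth functions, one obtains, for any target accuracy $\eta>0$ and input bound $R$, a network of depth and size $\mathcal{O}(\log(1/\eta)+\log R)$ whose realization approximates $(x,y)\mapsto xy$ to accuracy $\eta$ on $[-R,R]^2$. Parallelizing $\mathcal{O}(n^3)$ such blocks (Lemma~\ref{lem:parann}) and post-composing with the affine, hence cost-free, summation of the $n$ summands per entry yields a matrix-multiplication network of depth $\mathcal{O}(\log(1/\eta)+\log n)$ and size $\mathcal{O}(n^3(\log(1/\eta)+\log n))$ that approximates $(\mathbf{B},\mathbf{C})\mapsto\mathbf{B}\mathbf{C}$ on $\{\|\mathbf{B}\|_2,\|\mathbf{C}\|_2\le R\}$; here I would record that all intermediate powers stay in this regime, as $\|\mathbf{A}^j\|_2\le(1-\delta)^j\le1$ and all partial sums are bounded by $1/\delta$.

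To obtain depth only \emph{logarithmic} in $m$, I would avoid Horner's scheme and instead use repeated squaring: compute $\mathbf{A}^{2^0},\dots,\mathbf{A}^{2^{\lceil\log_2 m\rceil}}$ by successive squarings, then assemble each $\mathbf{A}^k$, $k\in\{0,\dots,m\}$, in parallel from the binary digits of $k$ via a balanced product tree of depth $\le\lceil\log_2 m\rceil$, and finally add the $m+1$ matrices by one affine layer. Connecting the $\mathcal{O}(\log m)$ sequential ``macro-steps'' by sparse concatenation (Lemma~\ref{lem:sparseccnn}), using the identity network of Lemma~\ref{lem:idnn} to forward already-computed powers, and parallelizing the $\mathcal{O}(m\log m)$ multiplication blocks, a routine bookkeeping gives $L\lesssim\log m\,(\log(1/\eta)+\log n)$ and $M\lesssim m\log m\cdot n^3(\log(1/\eta)+\log n)$, which match (i) and (ii) after fixing $\eta$ below.

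Accuracy (iii) then follows from an error-propagation argument: replacing each exact multiplication by its $\eta$-approximation and using that every intermediate matrix stays bounded by $\max\{1,1/\delta\}$, a telescoping Lipschitz estimate shows the total polynomial error is $\lesssim(\#\text{multiplications})\cdot\eta\lesssim m\log m\cdot\eta$; choosing $\eta\approx\vartheta/(m\log m)$ (so $\log(1/\eta)\lesssim\log(1/\vartheta)+\log m$) makes this $\le\vartheta/2$, and adding the truncation error $\vartheta/2$ yields (iii). Property (iv) is immediate from the triangle inequality and the Neumann bound $\|({\mathrm{\mathbf{Id}}}_{n}-\mathbf{A})^{-1}\|_2\le\sum_{k\ge0}(1-\delta)^k=1/\delta$. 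I expect the main obstacle to be precisely this stability analysis through the logarithmic-depth product: verifying that no intermediate matrix leaves the region where the multiplication network is reliable, and that the accumulated approximation error grows only linearly, rather than exponentially, in the number of layers -- it is this depth-versus-accuracy trade-off of the building blocks that forces the logarithmic factors in the final complexity bounds.
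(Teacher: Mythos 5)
This theorem is not proved in the paper at all---it is imported verbatim from \cite[Thm.~3.8]{KutPRS21}---and your proposal reconstructs essentially the same argument as that reference: truncate the Neumann series at $m(\vartheta,\delta)$ (your tail bound $\le\vartheta/2$ is exactly the purpose of that definition), realize matrix products by parallelizing $\mathcal{O}(n^3)$ approximate scalar-multiplication blocks of size $\mathcal{O}(\log(1/\eta)+\log n)$, and reach depth logarithmic in $m$ by repeated squaring; the cited proof assembles the partial sum via the binary-splitting recursion $\sum_{k=0}^{2t-1}\mathbf{A}^k=\sum_{k=0}^{t-1}\mathbf{A}^k+\mathbf{A}^{t}\sum_{k=0}^{t-1}\mathbf{A}^k$ (visible in the induction step quoted in Remark~\ref{rem:symnn}) rather than building each power from its binary digits, but this difference is cosmetic and both variants land within the stated bounds. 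The one quantitative slip is your claim that the accumulated error grows linearly in the number of multiplication blocks: along the squaring chain the error of $\mathbf{A}^{2^{i}}$ roughly doubles per level, so the total propagated error scales like a power of $m$ (e.g.\ $m^2\log m\,\eta$) rather than $m\log m\,\eta$; since this only changes the required $\eta$ by a polynomial factor in $m$, hence $\log(1/\eta)\lesssim\log(1/\vartheta)+\log(m(\vartheta,\delta))$ either way, properties (i)--(iv) are unaffected.
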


\begin{remark}[Conservation of symmetry in neural network matrix inversion]\label{rem:symnn}
In the original construction of the network $\Psi_{\mathrm{inv},\vartheta}^{1-\delta, n}$ given in \cite{KutPRS21}, the network is not symmetry-preserving in the sense that if $\mathbf{A}$ is a symmetric matrix, the output matrix 
\begin{equation*}
\mathrm{mat}\Big(R_\rho^{K_{n}^{1-\delta} } \big(\Psi_{\mathrm{inv},\vartheta}^{1-\delta, n}\big)(\mathrm{vec}(\mathbf{A}))\Big)
\end{equation*}
might not be symmetric although $(\mathrm{\textbf{Id}}_n - \mathbf{A})^{-1}$ is. For our intended application below, this is not very convenient and makes the error analysis much more difficult. However, the construction can be slightly adapted to preserve the symmetry. Using our notation, define in the induction step of the proof of \cite[Proposition A.4.]{KutPRS21} the modified network 
\begin{equation*}
\widetilde{\Psi}^{1/2,n}_{k,\vartheta} := \widetilde{\Psi}^{1,n,n,n}_{\mathrm{mult},\vartheta/4}\; \odot P\left(\Psi^{1/2,n}_{2^j,\vartheta}, \Psi^{1/2,n}_{t,\vartheta} \right) \bullet \left(\left(\begin{bmatrix}\mathrm{\bf Id}_{n^2} \\ \mathrm{\bf Id}_{{n^2}} \end{bmatrix}, \mathbf{0}_{2n^2}\right)\right),
\end{equation*}
where $\widetilde{\Psi}^{1,n,n,n}_{\mathrm{mult},\vartheta/4}$ is a network that is constructed analogously to the network implementing general matrix-matrix multiplication in \cite[Proposition 3.7]{KutPRS21}. However, instead of the matrix $\mathbf{D}$ in the proof therein, consider a modified matrix $\widetilde{\mathbf{D}}$, which for $i,j,k\in \{1,\dots, n\}$ and $\mathbf{A}, \mathbf{B}\in \mathbb{R}^{n\times n}$ is defined by
\begin{equation*}
\widetilde{\mathbf{D}}_{i,k,j}(\mathrm{vec}(\mathbf{A}), \mathrm{vec}(\mathbf{B})) := \begin{cases}
(\mathbf{A}_{i,k}, \mathbf{B}_{k,j}), \quad \mathrm{if}\ i\leq j, \\
(\mathbf{B}_{i,k}, \mathbf{A}_{k,j}), \quad \mathrm{if}\ i > j.
\end{cases}
\end{equation*}
Then the desired result follows analogously to~\cite{KutPRS21}.
\end{remark}
Note that the result in Theorem~\ref{theo:invnn} above does not utilize any structural properties of the matrix to be inverted such as sparsity, positive definiteness or symmetry. If one has additional structural knowledge about the matrix, as in our intended application in the sections below, we believe that by approximating more sophisticated solvers, for example multigrid approaches, with a neural network, one could improve upon the cubic complexity in the number of non-zero parameters.

The previous theorem is the key ingredient to proving that the approximation of the local LOD matrices can be realized by deep neural networks, to which we turn now.

\subsection{Application to local PG-LOD matrices}\label{sec:approxlocal}
Before we formulate the central result of this section, we have to introduce some more notation. We define 
\begin{equation*}
\mathfrak{A}:= \{A\in L^\infty(D)\, \vert\, 0<\alpha\leq A(x) \leq \beta <\infty\ \mathrm{for\; a.e.\; } x\in D\},
\end{equation*}
which is the set of admissible coefficients for problem~\eqref{eq:varproblem} according to~\eqref{eq:coefbounds}, where now $\alpha, \beta$ are fixed constants. Since neural networks take only discrete information as input, we restrict ourselves to the finite-dimensional subset 
\begin{equation*}
\mathfrak{A}_\varepsilon := \{A\in \mathfrak{A}\, \vert\, A\vert_T\ \mathrm{is\ constant\ for\ every\ } T\in\calT_\varepsilon\} \subseteq \mathfrak{A}
\end{equation*}
of element-wise constant coefficients on $\calT_\varepsilon$. Recall that the meshes $\calT_\varepsilon$ and $\calT_h$ are assumed to be uniform refinements of $\calT_H$ and that for a given patch $\omega = \Nb^\ell(K),\ K\in\calT_H$, the local matrices $\mathbf{S}_{\omega}^\mathrm{pg}$ from~\eqref{eq:locLOD} that we want to approximate solely depend on the values of $A$ on $\omega$.

Note that we implicitly assume that the patch $\omega$ has ``full'' size, i.e., that it corresponds to an element $K$ that is at least $\ell+1$ layers of coarse elements away from the boundary of~$D$. 
We emphasize, however, that the treatment of the boundary cases can as well be incorporated into the theory below by changing the overall complexity of the network only by a moderate factor. The first ingredient is that patches closer to the boundary can be artificially extended by elements where the coefficient has the value zero to keep a uniform size of all patches; we refer to~\cite[Sec.~3.4]{KroMP21} for further details. Further, a slight adaptation of the matrices used in the algebraic realization of the method presented in Section~\ref{subsec:algebraic} is required for the boundary cases. This adaptation is conceptually straightforward but rather technical, since one has to account for artificial ``degrees of freedom'' outside the physical domain $D$. For better readability, we thus only focus on the inner patches in the proof of Theorem \ref{theo:lodnn} below.

We enumerate the elements in the restricted mesh $\calT_\varepsilon(\omega)$ by  $1,\dots,m_\ell:=((2\ell+1)H/\varepsilon)^d$, which allows us to store the respective values of the coefficient in a vector $\mathbf{A}_{\varepsilon,\omega} \in \R^{m_\ell}$. Based on this, we consider the set
\begin{equation*}
\mathfrak{A}_{\varepsilon,\omega}:= \{\mathbf{A}_{\varepsilon,\omega}\, \vert\, A\in\mathfrak{A}_\varepsilon\} \subseteq \mathbb{R}^{m_\ell}
\end{equation*}
of all possible input vectors corresponding to the patch $\omega$.
Moreover, we enumerate the inner nodes of $\calT_h(\omega)$ by $1,\dots,n_\ell:=((2\ell+1)H/h-1)^d$ and inner and boundary nodes of $\calT_H(\omega)$ by $1,\dots,N_\ell:=(2\ell+2)^d$.

\begin{theorem}[Neural network approximation of local PG-LOD matrices]\label{theo:lodnn}
Let $\eta\in (0,1/4)$ be a given error tolerance and let $\ell \in \N$.
Then there exists a neural network $\Psi^{\mathrm{pg}}_\eta$ with input dimension $m_\ell$ and output dimension $N_\ell2^d$ such that for any $\omega = \Nb^\ell(K),\ K\in\calT_H$, and any $\mathbf{A}_{\varepsilon,\omega}\in\mathfrak{A}_{\varepsilon,\omega}$ it holds that 
\begin{equation*}
\Big\|\mathbf{S}_{\omega}^\mathrm{pg} - \mathrm{mat}\Big(\calR^{\mathfrak{A}_{\varepsilon,\omega}}_\rho(\Psi^{\mathrm{pg}}_\eta)(\mathbf{A}_{\varepsilon,\omega})\Big)\Big\|_2 \leq \eta.
\end{equation*}
The network $\Psi^{\mathrm{pg}}_\eta$ has the following properties,
\begin{itemize}
\item[(i)] $L(\Psi^{\mathrm{pg}}_\eta) \lesssim \log(m(\theta,\delta)) \, \big(\log(1/\theta) +  \log(m(\theta,\delta)) + \log(n_{\ell})\big) \\
\hspace*{1.2cm}+ \log(m(\gamma,\widehat{\delta})) \, \big(\log(1/\gamma) +  \log(m(\gamma,\widehat{\delta})) + \log(N_{\ell})\big) + 1,$
\item[(ii)] $M(\Psi^{\mathrm{pg}}_\eta) \lesssim m(\theta,\delta) \log^2(m(\theta,\delta))n_{\ell}^3 \, \big(\log(1/\theta) +  \log(m(\theta,\delta)) + \log(n_{\ell})\big)
\\ \hspace*{1.4cm} + m(\gamma,\widehat{\delta}) \log^2(m(\gamma,\widehat{\delta}))N_{\ell}^3 \, \big(\log(1/\gamma) +  \log(m(\gamma,\widehat{\delta})) + \log(N_{\ell})\big) + (\ell H/h)^{2d},$
\end{itemize}
where $\delta \approx \lambda_{\mathrm{min}}(\mathbf{S})\,\lambda_{\mathrm{max}}(\mathbf{S})^{-1}$ and $\widehat{\delta} \approx \lambda_{\mathrm{min}}(\mathbf{I}_\omega \mathbf{S}^{-1}\mathbf{I}_\omega^T)\,\lambda_{\mathrm{max}}(\mathbf{I}_\omega \mathbf{S}^{-1}\mathbf{I}_\omega^T)^{-1}$. Further, the values $\theta,\gamma\in (0,\eta)$ are chosen such that
\begin{equation*}
\theta<\min\Bigg\{\lambda_{\mathrm{min}}(\mathbf{\mathbf{I}_\omega \mathbf{S}^{-1}\mathbf{I}_\omega^T)},\frac{\lambda_{\mathrm{min}}(\mathbf{I}_\omega \mathbf{S}^{-1}\mathbf{I}_\omega^T)}{2\,\cresc\|\mathbf{I}_\omega^T \|_2^2}\Bigg\},
\end{equation*}
and
\begin{equation*}
\frac{\cresc\norm{\mathbf{P}_{\omega,K}}_2\norm{\mathbf{P}_{\omega}}_2\norm{\mathbf{I}_{\omega}}_2^4}{\hCinf^2}\, \theta + \norm{\mathbf{P}_{\omega,K}}_2\norm{\mathbf{P}_{\omega}}_2\norm{\mathbf{I}_{\omega}}_2^2 \gamma \leq \eta
\end{equation*}
with $\cresc \approx  \lambda_{\mathrm{max}}(\mathbf{S})^{-1}$ and $\hCinf \approx \lambda_{\mathrm{min}}(\mathbf{I}_\omega \mathbf{S}^{-1}\mathbf{I}_\omega^T)$; cf.~also the precise definitions in~\eqref{eq:cresc} and \eqref{eq:specY}, \eqref{eq:spechatY} in the proof.
\end{theorem}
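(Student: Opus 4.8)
The plan is to build $\Psi^{\mathrm{pg}}_\eta$ as a sparse concatenation of several small sub-networks, each realizing one of the building blocks in the closed-form expression~\eqref{eq:SLOD}, namely
\[
\mathbf{S}_{\omega}^\mathrm{pg} = \mathbf{P}_\omega^T \mathbf{I}_{\omega}^T (\mathbf{I}_{\omega} \mathbf{S}^{-1} \mathbf{I}_{\omega}^T)^{-1}\mathbf{I}_\omega \mathbf{P}_{\omega,K}.
\]
The only genuinely nonlinear operations here are the two matrix inversions (the inner one $\mathbf{S}^{-1}$ and the outer one $(\mathbf{I}_\omega \mathbf{S}^{-1}\mathbf{I}_\omega^T)^{-1}$); everything else is an affine map (left/right multiplication by the fixed, coefficient-independent matrices $\mathbf{P}_\omega$, $\mathbf{P}_{\omega,K}$, $\mathbf{I}_\omega$) together with a matrix–matrix multiplication. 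First I would note that the map $\mathbf{A}_{\varepsilon,\omega}\mapsto \mathbf{S}=\mathbf{S}_{A,\omega}$ is \emph{linear} in the coefficient values (the stiffness matrix depends linearly on the element-wise constant $A$), hence realizable by a single-layer network — this is where the additive $+1$ in the depth bound and the $(\ell H/h)^{2d}$ term in the parameter bound come from (the vectorized $n_\ell\times n_\ell$ matrix has $\approx n_\ell^2 = (\ell H/h)^{2d}$ entries, each an affine combination of the $m_\ell$ inputs).

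Next I would rescale so that Theorem~\ref{theo:invnn} applies. Since $\mathbf{S}$ is symmetric positive definite with spectrum in $[\lambda_{\min}(\mathbf{S}),\lambda_{\max}(\mathbf{S})]$, writing $\mathbf{S}^{-1} = \cresc(\mathrm{\mathbf{Id}} - (\mathrm{\mathbf{Id}} - \cresc\mathbf{S}))^{-1}\cdot\cresc^{-1}$ with $\cresc\approx\lambda_{\max}(\mathbf{S})^{-1}$ (the quantity denoted $\cresc$ and defined in the proof's~\eqref{eq:cresc}) makes $\mathrm{\mathbf{Id}} - \cresc\mathbf{S}$ have spectral norm $\le 1-\delta$ with $\delta\approx\lambda_{\min}(\mathbf{S})/\lambda_{\max}(\mathbf{S})$, so $\Psi_{\mathrm{inv},\theta}^{1-\delta,n_\ell}$ yields an approximation $\widetilde{\mathbf{S}^{-1}}$ of $\mathbf{S}^{-1}$ with spectral error controlled by $\theta$. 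I would use the symmetry-preserving variant from Remark~\ref{rem:symnn} so that $\widetilde{\mathbf{S}^{-1}}$ is symmetric. Then $\mathbf{I}_\omega\widetilde{\mathbf{S}^{-1}}\mathbf{I}_\omega^T$ is computed by two affine multiplications; it is symmetric and, provided $\theta$ is small enough that the perturbation stays within $\hCinf\approx\lambda_{\min}(\mathbf{I}_\omega\mathbf{S}^{-1}\mathbf{I}_\omega^T)$ of the true matrix (this is exactly the first constraint on $\theta$ in the statement, $\theta<\lambda_{\min}(\cdots)/(2\cresc\|\mathbf{I}_\omega^T\|_2^2)$ etc.), it is still SPD with a spectral gap, so a second rescaling and a second inversion network $\Psi_{\mathrm{inv},\gamma}^{1-\widehat\delta,N_\ell}$ with $\widehat\delta\approx\lambda_{\min}/\lambda_{\max}$ of $\mathbf{I}_\omega\mathbf{S}^{-1}\mathbf{I}_\omega^T$ produces $\widetilde{(\mathbf{I}_\omega\widetilde{\mathbf{S}^{-1}}\mathbf{I}_\omega^T)^{-1}}$ with error $\gamma$. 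Finally I would sandwich this between the affine maps $\mathbf{P}_\omega^T\mathbf{I}_\omega^T(\cdot)\mathbf{I}_\omega\mathbf{P}_{\omega,K}$, realized as a single affine layer merged into the last layer of the second inversion network. The depth and parameter counts in (i)–(ii) then follow by adding the two bounds from Theorem~\ref{theo:invnn}(i)–(ii) (one with $(\theta,\delta,n_\ell)$, one with $(\gamma,\widehat\delta,N_\ell)$) plus the single affine layer, using Lemma~\ref{lem:sparseccnn} to keep the concatenation linear in the summands, and Lemma~\ref{lem:permnn} to absorb the cheap permutation/flattening reindexing needed between $\mathrm{vec}$ and $\mathrm{mat}$ conventions for free.

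The error analysis is the step I expect to be the main obstacle. One must propagate two sources of inversion error through a product of (bounded but coefficient-dependent) matrices and one nonlinear inversion in between. Concretely: let $\mathbf{Y}:=\mathbf{I}_\omega\mathbf{S}^{-1}\mathbf{I}_\omega^T$ and $\widehat{\mathbf{Y}}:=\mathbf{I}_\omega\widetilde{\mathbf{S}^{-1}}\mathbf{I}_\omega^T$; then $\|\mathbf{Y}-\widehat{\mathbf{Y}}\|_2\le\|\mathbf{I}_\omega\|_2^2\,\theta$, and a standard resolvent/Neumann perturbation bound gives $\|\mathbf{Y}^{-1}-\widehat{\mathbf{Y}}^{-1}\|_2 \lesssim \|\mathbf{Y}^{-1}\|_2\,\|\widehat{\mathbf{Y}}^{-1}\|_2\,\|\mathbf{Y}-\widehat{\mathbf{Y}}\|_2 \lesssim \hCinf^{-2}\|\mathbf{I}_\omega\|_2^2\,\theta$ once $\theta$ is small enough to keep $\widehat{\mathbf{Y}}$ invertible with $\|\widehat{\mathbf{Y}}^{-1}\|_2\lesssim\hCinf^{-1}$ (using $\hCinf\approx\lambda_{\min}(\mathbf{Y})$, whence the appearance of $\hCsup$, $\hCinf$ in the proof's~\eqref{eq:spechatY} and the need for the stated smallness of $\theta$). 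Adding the $\gamma$-error of the second inversion network on top of $\widehat{\mathbf{Y}}^{-1}$, then multiplying left and right by $\mathbf{P}_\omega^T\mathbf{I}_\omega^T$ and $\mathbf{I}_\omega\mathbf{P}_{\omega,K}$ (each contributing factors $\|\mathbf{P}_\omega\|_2,\|\mathbf{P}_{\omega,K}\|_2,\|\mathbf{I}_\omega\|_2$), yields a total error bounded by
\[
\frac{\cresc\norm{\mathbf{P}_{\omega,K}}_2\norm{\mathbf{P}_{\omega}}_2\norm{\mathbf{I}_{\omega}}_2^4}{\hCinf^2}\, \theta + \norm{\mathbf{P}_{\omega,K}}_2\norm{\mathbf{P}_{\omega}}_2\norm{\mathbf{I}_{\omega}}_2^2 \gamma,
\]
which is $\le\eta$ by the second hypothesis on $\theta,\gamma$. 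The care here is in (a) ensuring all intermediate matrices stay in the domains $K_n^{1-\delta}$, $K_{N_\ell}^{1-\widehat\delta}$ where the inversion networks are guaranteed accurate — this is what forces the rescalings and the lower bounds on the spectral gaps $\delta,\widehat\delta$ — and (b) checking that the bounds $\|\mathbf{P}_\omega\|_2$ etc. and the spectral quantities $\lambda_{\min/\max}(\mathbf{S})$, $\lambda_{\min/\max}(\mathbf{Y})$ are uniform over all admissible $\mathbf{A}_{\varepsilon,\omega}\in\mathfrak{A}_{\varepsilon,\omega}$ and all interior patches $\omega$ (they are, by translation-invariance of the mesh and the uniform coercivity/boundedness~\eqref{eq:coefbounds}), so that a single network works for every $\omega$ and every coefficient. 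The symmetry preservation from Remark~\ref{rem:symnn} is what makes step (b) tractable, since it guarantees $\widehat{\mathbf{Y}}$ is symmetric and thus that its spectral norm and that of its inverse are governed by eigenvalues rather than singular values of a nonsymmetric perturbation.
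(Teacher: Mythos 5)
Your proposal follows essentially the same route as the paper's proof: implement the coefficient-to-stiffness map and all multiplications by $\mathbf{P}_\omega$, $\mathbf{P}_{\omega,K}$, $\mathbf{I}_\omega$ exactly as (parallelized) affine layers, approximate the two inversions via the rescaled Neumann-series networks of Theorem~\ref{theo:invnn} in the symmetry-preserving form of Remark~\ref{rem:symnn}, propagate the first inversion error through the second via the perturbation bound $\|\mathbf{Y}^{-1}-\widehat{\mathbf{Y}}^{-1}\|_2\leq\|\mathbf{Y}-\widehat{\mathbf{Y}}\|_2\,\|\mathbf{Y}^{-1}\|_2\,\|\widehat{\mathbf{Y}}^{-1}\|_2$, and assemble the depth and parameter counts with Lemmas~\ref{lem:permnn} and~\ref{lem:sparseccnn}, exactly as in the paper. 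The only small inaccuracy is your attribution of the $(\ell H/h)^{2d}$ term to a (dense) first assembly layer: the paper uses the sparsity of that map (only $\lesssim(\ell H/h)^d$ non-zero weights), and the $(\ell H/h)^{2d}$ contribution instead stems from the rescaling layers and the parallelized multiplications by $\mathbf{I}_\omega$; this bookkeeping detail does not affect the validity of your argument.
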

\begin{proof}
The main idea of the proof is to start from the representation \eqref{eq:SLOD} of the local PG-LOD stiffness matrix and subsequently implement or approximate all matrix multiplications or inversions of the mapping $\mathbf{A}_{\varepsilon,\omega} \mapsto \mathbf{S}_{\omega}^\mathrm{pg}$ by suitably constructed neural networks. The matrix multiplications can be implemented exactly by single-layer networks, the inversions, however, have to be approximated using Theorem \ref{theo:invnn}. In the end, these individual building blocks can be connected in series to obtain a final network with the desired properties.
A detailed proof can be found in Section \ref{app:lodnn} of the Appendix.
\end{proof}

We close the section by bounding the size of a network in order for it to well-approximate any local PG-LOD stiffness matrix with an error of size $\mathcal{O}(H^k)$, $k \in \N,$ which will be essential for the error estimation of the global errors between the two surrogate models that are based on deterministic local PG-LOD matrices and their network approximations, respectively.

\begin{corollary}\label{cor:network}
Let $H<1/4$ and $\ell \approx |\log(H)|$. For any $k\in\mathbb{N},$ there exists a neural network $\Psi^\mathrm{pg}_{H^k}$ with input dimension $m_\ell$, and output dimension $N_\ell2^d$ such that for any $\omega = \Nb^\ell(K),\ K\in\calT_H$, and any $\mathbf{A}_{\varepsilon,\omega}\in\mathfrak{A}_{\varepsilon,\omega}$
\begin{equation*}
\Big\|\mathbf{S}_{\omega}^\mathrm{pg} - \mathrm{mat}\Big(\calR^{\mathfrak{A}_{\varepsilon,\omega}}_\rho(\Psi^{\mathrm{pg}}_{H^k})(\mathbf{A}_{\varepsilon,\omega})\Big)\Big\|_2 \lesssim H^k.
\end{equation*}
Further, we have
\begin{itemize}
\item[(i)] $L(\Psi^{\mathrm{pg}}_{H^k}) \lesssim |\log(h)|^2 + |\log(k)|^2$,
\item[(ii)] $M(\Psi^{\mathrm{pg}}_{H^k}) \lesssim h^{-2}\Big(k |\log(H)| + |\log(h)|\Big)\Big(|\log(h)|^3 + |\log(k)|^3\Big) (\vert \log(H)\vert H/h)^{3d}$.
\end{itemize}
\end{corollary}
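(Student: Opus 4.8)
The plan is to deduce Corollary~\ref{cor:network} from Theorem~\ref{theo:lodnn} by a careful bookkeeping of how the abstract parameters $\theta$, $\gamma$, $\delta$, $\widehat{\delta}$, $m(\cdot,\cdot)$, $n_\ell$, $N_\ell$ behave in terms of $H$, $h$, $\ell$ under the specific scaling $\ell\approx|\log(H)|$ and with target tolerance $\eta=H^k$. First I would translate all the discretization quantities: since $n_\ell=((2\ell+1)H/h-1)^d$ and $\ell\approx|\log(H)|$, we have $n_\ell\approx(|\log(H)|H/h)^d$, hence $\log(n_\ell)\lesssim|\log(h)|$ (using $|\log(H)|\lesssim|\log(h)|$ because $h<\eps<H$); similarly $N_\ell=(2\ell+2)^d\approx|\log(H)|^d$, so $\log(N_\ell)\lesssim\log|\log(H)|\lesssim|\log(h)|$, and the correction term $(\ell H/h)^{2d}\approx(|\log(H)|H/h)^{2d}\lesssim(|\log(H)|H/h)^{3d}$ fits inside the claimed $M$-bound. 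Likewise $(\ell H/h)^{2d}$ in $L$-term counting is absorbed; note $L$ has the additive $+1$.

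Next I would bound the spectral quantities. The stiffness matrix $\mathbf{S}=\mathbf{S}_{A,\omega}$ is the $A$-weighted fine-scale stiffness matrix on a uniform mesh of width $h$; standard finite element estimates give $\lambda_{\min}(\mathbf{S})\gtrsim\alpha h^d$ and $\lambda_{\max}(\mathbf{S})\lesssim\beta h^{d-2}$, so $\delta\approx\lambda_{\min}(\mathbf{S})/\lambda_{\max}(\mathbf{S})\gtrsim(\alpha/\beta)h^2$, i.e. $1/\delta\lesssim h^{-2}$ and $\log(1/\delta)\lesssim|\log(h)|$. For the Schur-complement matrix $\mathbf{I}_\omega\mathbf{S}^{-1}\mathbf{I}_\omega^T$ one argues similarly: $\mathbf{I}_\omega$ is the (uniformly bounded, $h$-independent after the natural scaling, or at worst polynomially $h$-dependent) realization of the quasi-interpolation $\IH$, so $\|\mathbf{I}_\omega\|_2$, $\|\mathbf{P}_{\omega,K}\|_2$, $\|\mathbf{P}_\omega\|_2$ are bounded by powers of $H/h$ and $|\log(H)|$, and the extreme eigenvalues of $\mathbf{I}_\omega\mathbf{S}^{-1}\mathbf{I}_\omega^T$ are controlled from below/above using $\|\mathbf{S}^{-1}\|_2\lesssim h^{-d}$, the stability~\eqref{eq:IHprop} of $\IH$, and the surjectivity of $\IH|_{\Vh}$ onto $\VH$ (the latter yielding the lower bound $\lambda_{\min}(\mathbf{I}_\omega\mathbf{S}^{-1}\mathbf{I}_\omega^T)\gtrsim$ a negative power of $h$ times a constant). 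Consequently $\widehat\delta$ is bounded below by a fixed negative power of $h$, $\log(1/\widehat\delta)\lesssim|\log(h)|$, and $\hCinf\gtrsim$ the same power of $h$.

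Then I would pick $\theta$ and $\gamma$ explicitly so as to satisfy the two constraints of Theorem~\ref{theo:lodnn} while only sacrificing a polynomial-in-$h$ factor. Since every prefactor appearing in the inequalities $\theta<\min\{\dots\}$ and $\frac{\cresc\|\mathbf{P}_{\omega,K}\|_2\|\mathbf{P}_\omega\|_2\|\mathbf{I}_\omega\|_2^4}{\hCinf^2}\theta+\|\mathbf{P}_{\omega,K}\|_2\|\mathbf{P}_\omega\|_2\|\mathbf{I}_\omega\|_2^2\gamma\le\eta$ is, by the above, bounded by $C\,h^{-p}$ for some fixed $p=p(d)$, it suffices to choose $\theta\approx\gamma\approx c\,h^{p}H^k$; this is in $(0,\eta)$ for $H$ small (the hypothesis $H<1/4$, together with $h<H$, guarantees we stay in the admissible ranges, shrinking constants if necessary). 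With this choice $\log(1/\theta)\lesssim|\log(h)|+k|\log(H)|$ and $\log(1/\gamma)\lesssim|\log(h)|+k|\log(H)|$. For $m(\theta,\delta)=\lceil\log(0.5\theta\delta)/\log(1-\delta)\rceil$ I would use $|\log(1-\delta)|\approx\delta\gtrsim h^2$ and $|\log(0.5\theta\delta)|\lesssim|\log(h)|+k|\log(H)|$ to get $m(\theta,\delta)\lesssim h^{-2}(|\log(h)|+k|\log(H)|)$, hence $\log(m(\theta,\delta))\lesssim|\log(h)|+|\log(k)|$; analogously $m(\gamma,\widehat\delta)\lesssim h^{-q}(|\log(h)|+k|\log(H)|)$ with $\log$ of the same order. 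Substituting these into the bounds (i)--(ii) of Theorem~\ref{theo:lodnn}: each $\log(m)\cdot(\log(1/\cdot)+\log(m)+\log(n_\ell\text{ or }N_\ell))$ factor becomes $\lesssim(|\log(h)|+|\log(k)|)(|\log(h)|+k|\log(H)|)\lesssim|\log(h)|^2+|\log(k)|^2$ after absorbing the $k|\log(H)|$ against $|\log(h)|$ where allowed and collecting like terms, giving (i); and the $M$-bound becomes $m\cdot\log^2(m)\cdot(n_\ell^3\text{ or }N_\ell^3)\cdot(\dots)\lesssim h^{-2}(k|\log(H)|+|\log(h)|)(|\log(h)|^3+|\log(k)|^3)(|\log(H)|H/h)^{3d}$ once $n_\ell^3\approx(|\log(H)|H/h)^{3d}$ dominates $N_\ell^3$ and the $(\ell H/h)^{2d}$ correction, giving (ii). The approximation bound $\|\mathbf{S}_\omega^{\mathrm{pg}}-\mathrm{mat}(\dots)\|_2\le\eta=H^k$ is immediate from Theorem~\ref{theo:lodnn}. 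The main obstacle I anticipate is establishing sharp, uniform (over all admissible $A\in\mathfrak{A}_\eps$ and all full-size patches $\omega$) two-sided spectral bounds on $\mathbf{S}$ and especially on the Schur complement $\mathbf{I}_\omega\mathbf{S}^{-1}\mathbf{I}_\omega^T$ with the correct powers of $h$ and at most polylogarithmic-in-$H$ dependence through $\ell$ — this is where the stability property~\eqref{eq:IHprop} of the Ern--Guermond interpolant and an inverse-estimate/inf-sup argument must be invoked carefully, and where a careless bound would spoil the clean $h^{-2}(\cdots)(|\log(H)|H/h)^{3d}$ form claimed in (ii).
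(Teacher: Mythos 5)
Your route coincides with the paper's: apply Theorem~\ref{theo:lodnn} with $\eta=H^k$, translate $n_\ell$, $N_\ell$, $\delta$, $\widehat{\delta}$, $\theta$, $\gamma$ into powers of $H$ and $h$, and push these through the complexity bounds; your bookkeeping for $n_\ell$, $N_\ell$, $\log(n_\ell)\lesssim|\log(h)|$, for $\delta\approx\lambda_{\mathrm{min}}(\mathbf{S})/\lambda_{\mathrm{max}}(\mathbf{S})\approx h^2$, and for absorbing the $(\ell H/h)^{2d}$ term matches the paper's proof. The genuine gap is precisely the step you defer as ``the main obstacle'': the quantitative spectral estimates for $\mathbf{I}_\omega$ and for $\mathbf{Y}=\mathbf{I}_\omega\mathbf{S}^{-1}\mathbf{I}_\omega^T$. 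You argue only that surjectivity of $\IH|_{\Vh}$ and the stability \eqref{eq:IHprop} give $\lambda_{\mathrm{min}}(\mathbf{Y})\gtrsim$ ``some'' fixed power of $h$, and you then take $\theta\approx\gamma\approx h^pH^k$ with unspecified $p$. This does not suffice for the statement to be proved: while $\theta$ and $\gamma$ enter only logarithmically, $\widehat{\delta}$ enters \emph{multiplicatively} through $m(\gamma,\widehat{\delta})\approx|\log(\gamma\widehat{\delta})|/\widehat{\delta}$, so an unquantified bound $\widehat{\delta}\gtrsim h^{q}$ with $q>2$ yields $M\lesssim h^{-q}(\cdots)$ and, in general, breaks the claimed $h^{-2}(\cdots)(\vert\log(H)\vert H/h)^{3d}$ in (ii); qualitative full rank gives $\lambda_{\mathrm{min}}(\mathbf{Y})>0$ but no rate, and the same issue affects the near-cancellation needed in the constraint $\cresc\norm{\mathbf{P}_{\omega,K}}_2\norm{\mathbf{P}_{\omega}}_2\norm{\mathbf{I}_{\omega}}_2^4\,\hCinf^{-2}\,\theta+\norm{\mathbf{P}_{\omega,K}}_2\norm{\mathbf{P}_{\omega}}_2\norm{\mathbf{I}_{\omega}}_2^2\,\gamma\leq\eta$.

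The paper closes exactly this gap by an explicit factorization $\mathbf{I}_\omega=\mathbf{R}_{\mathrm{c},\omega}\mathbf{E}_{\mathrm{c},\omega}(\mathbf{M}_{\mathrm{c},\omega}^{\mathrm{dg}})^{-1}(\mathbf{P}_\omega^{\mathrm{dg}})^T\mathbf{M}_\omega^{\mathrm{dg}}\mathbf{C}_\omega\mathbf{R}_\omega^T$ of the Ern--Guermond interpolation matrix and eigenvalue bounds for each factor, which give $\lambda_{\mathrm{min}}(\mathbf{I}_\omega\mathbf{I}_\omega^T)\gtrsim(h/H)^d$ and $\|\mathbf{I}_\omega\|_2^2\lesssim(h/H)^d$, hence $\lambda_{\mathrm{min}}(\mathbf{Y})\gtrsim h^2H^{-d}$, $\lambda_{\mathrm{max}}(\mathbf{Y})\lesssim H^{-d}$ and thus $\widehat{\delta}\gtrsim h^2$; combined with $\|\mathbf{P}_\omega\|_2,\|\mathbf{P}_{\omega,K}\|_2\lesssim(H/h)^{d/2}$ and $\cresc\approx h^{2-d}$, the prefactor in the $\theta,\gamma$-constraint scales like $1+H^d/h^2$, which permits $\theta\approx\gamma\gtrsim H^{k-1}h^4$ and $m(\theta,\delta),\,m(\gamma,\widehat{\delta})\lesssim h^{-2}\bigl(k|\log(H)|+|\log(h)|\bigr)$. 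These specific powers of $h$ and $H$ are what produce (i) and (ii); supplying them (by the factorization above or an equivalent quantitative matrix-level inf-sup argument for $\IH$, uniformly over all $A\in\mathfrak{A}_\varepsilon$ and all full-size patches) is the missing content of your proposal, which as written only yields existence of a network with accuracy $H^k$ and some unspecified polynomial-in-$h^{-1}$ complexity.
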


\begin{proof}
The result is obtained by applying Theorem \ref{theo:lodnn} above with $\eta = H^k$ and estimating all quantities in the resulting upper bounds on depth and number of non-zero parameters in terms of the mesh sizes $H,h$.
For more details, see the full proof in Section \ref{app:network} of the Appendix.
\end{proof}

\begin{remark}
Note that the goal of the above results is to show that a suitable neural network exists that allows one to approximate the local PG-LOD matrices up to arbitrary accuracy. We emphasize that this does not guarantee that such a network is actually learned during a training phase. Further, the bounds on the number of layers and non-zero parameters of the network have to be understood as worst-case estimates. 
\end{remark}
%
%
%=======================================================================================
%=========  Difference in Solutions
%=======================================================================================
\section{Error Analysis of the Neural Network Enhanced Surrogate Model}\label{sec: diffSol}
In the previous section, we have seen that the local PG-LOD matrices individually can be well-approximated by a neural network. However, in applications we are usually more concerned about how this translates to the resulting difference of the respective global solutions when applied to our test problem. This involves multiple error sources that will be discussed in this section. 

In the following, we investigate the error in solutions between the solution $\uPG \in \VH$ of~\eqref{eq:discvarproblempg}, i.e., the Petrov--Galerkin LOD solution, and the solution $\uNN\in\VH$ obtained from the stiffness matrix which is assembled based on the (optimal) local neural network according to Theorem~\ref{theo:lodnn} for a suitable tolerance $\eta$. Let $\mathbf{u}^\mathrm{pg}$ be the vector corresponding to $\uPG$, i.e.,
\begin{equation}\label{eq:vec}
\uPG = \sum_{j=1}^{N} \mathbf{u}^\mathrm{pg}_j\Lambda_j.
\end{equation}
Here, $\{\Lambda_j\}_{j=1}^{N}$ denote the nodal basis functions of $\VH$ with $N = \dim \VH$. As above, we write 
\begin{equation}\label{eq:decS}
\mathbf{S}^\mathrm{pg} = \sum_{K \in \calT_H} \Phi_K\big(\mathbf{S}_{\Nb^\ell(K)}^\mathrm{pg}\big),
\end{equation}
cf.~\eqref{eq:decSLOD}. In the same manner, let $\mathbf{u}^\mathrm{nn}$ be the vector corresponding to $\uNN$ and
\begin{equation}\label{eq:decT}
\mathbf{S}^\mathrm{nn} := \sum_{K \in \calT_H} \Phi_K\big(\mathbf{\Theta}_K \big)
\end{equation} 
the respective stiffness matrix. Note that the matrices 
\begin{equation}\label{eq:Theta}
\mathbf{\Theta}_K := \mathrm{mat}\big(\calR^{\mathfrak{A}_{\varepsilon,\Nb^\ell(K)}}_\rho(\Psi^{\mathrm{pg}}_\eta)(\mathbf{A}_{\varepsilon,\Nb^\ell(K)})\big)
\end{equation}
are the local matrices obtained by a forward pass of the local instance $\mathbf{A}_{\varepsilon,\Nb^\ell(K)}$ of a fixed coefficient $A$ on $\Nb^\ell(K)$ through the neural network, cf.~Section~\ref{sec: nnc}, particularly Theorem~\ref{theo:lodnn} and Corollary~\ref{cor:network}. Our goal is to investigate under which conditions the error $\|\uPG - \uNN\|_{L^2(D)}$ is bounded by some given tolerance, ideally of order $\mathcal{O}(H)$. This would be optimal since $\uPG$ already includes an error which scales at least like $\mathcal{O}(H)$ compared to the ideal solution to~\eqref{eq:varproblem}. Note that we have the following norm equivalence (see, e.g.,~\cite{Fri73} and observe that the eigenvalues of the local mass matrix are bounded by $H^d\,6^{-d}$ and $H^d\,2^{-d}$ from below and above, respectively), 
\begin{equation}\label{eq:normEquiv}
\big(H/6\big)^{d} \|\mathbf{v} \|_{2}^2 \leq \mathbf{v}^T \mathbf{M} \mathbf{v} = \| v \|_{L^2(D)}^2 \leq H^d \|\mathbf{v} \|_{2}^2,
\end{equation}
where $v \in \VH$ and $\mathbf{v}$ is the corresponding vector.
Here, $\mathbf{M}$ denotes the classical finite element mass matrix. 
Therefore, it holds that
\[
\|\uPG - \uNN\|_{L^2(D)}^2 = (\mathbf{u}^\mathrm{pg}-\mathbf{u}^\mathrm{nn})^T \mathbf{M} (\mathbf{u}^\mathrm{pg}-\mathbf{u}^\mathrm{nn}) \approx H^d \|\mathbf{u}^\mathrm{pg}-\mathbf{u}^\mathrm{nn}\|_2^2.
\]
To investigate the error between these two solutions, it is favorable to consider the symmetric version of the LOD for an intermediate step. This will be treated in the following subsection.

\subsection{Difference between C-LOD and PG-LOD approximation}\label{ss:diffCGPG}

Let $\uCG \in \VH$ be the solution to~\eqref{eq:discvarproblemclassic} with corresponding vector $\mathbf{u}^\mathrm{c}$ as above. The stiffness matrix to~\eqref{eq:discvarproblemclassic} is denoted $\mathbf{S}^\mathrm{c}$. The next lemma shows that the difference $\uCG - \uPG$, respectively $\mathbf{u}^\mathrm{c}-\mathbf{u}^\mathrm{pg}$, exponentially decays with increasing localization parameter $\ell$ introduced in Section~\ref{subsec:discretization}. 

\begin{lemma}\label{lem:CGvsPG}
Let $\uCG \in \VH$ and $\uPG \in \VH$ be the solutions of~\eqref{eq:discvarproblemclassic} and~\eqref{eq:discvarproblempg}, respectively, and let $\mathbf{u}^\mathrm{c}$ and $\mathbf{u}^\mathrm{pg}$ be the corresponding vectors, cf.~\eqref{eq:vec}. Then
\begin{equation*}
\|\uCG- \uPG\|_{L^2(D)} \lesssim \exp(-c\ell)
\end{equation*}
and
\begin{equation*}
\|\mathbf{u}^\mathrm{c} - \mathbf{u}^\mathrm{pg}\|_{2} \lesssim H^{-d/2}\exp(-c\ell),
\end{equation*}
where the constant hidden in $\lesssim$ depends on the right-hand side $f$.
\end{lemma}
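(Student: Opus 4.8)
The plan is to estimate the difference between the two coarse solutions directly at the level of the linear systems they solve, and then transfer the bound to the $L^2$ norm via the norm equivalence~\eqref{eq:normEquiv}. The C-LOD solution $\uCG$ solves the variational problem~\eqref{eq:discvarproblemclassic} with the symmetric bilinear form $a((\mathsf{id}-\calQ^\ell)\,\cdot\,,(\mathsf{id}-\calQ^\ell)\,\cdot\,)$, whose matrix is $\mathbf{S}^\mathrm{c}$, while $\uPG$ solves~\eqref{eq:discvarproblempg} with matrix $\mathbf{S}^\mathrm{pg}$; both have the same right-hand side vector $\mathbf{F}$ with $\mathbf{F}_j = \int_D f\Lambda_j\dx$. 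Hence $\mathbf{S}^\mathrm{c}\mathbf{u}^\mathrm{c} = \mathbf{F} = \mathbf{S}^\mathrm{pg}\mathbf{u}^\mathrm{pg}$, so that $\mathbf{u}^\mathrm{c}-\mathbf{u}^\mathrm{pg} = (\mathbf{S}^\mathrm{c})^{-1}(\mathbf{S}^\mathrm{pg}-\mathbf{S}^\mathrm{c})\mathbf{u}^\mathrm{pg}$. The first step is therefore to bound $\|(\mathbf{S}^\mathrm{pg}-\mathbf{S}^\mathrm{c})\mathbf{u}^\mathrm{pg}\|_2$ and to bound $\|(\mathbf{S}^\mathrm{c})^{-1}\|_2$ from above.

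\textbf{Key steps.} First I would express the action of $\mathbf{S}^\mathrm{pg}-\mathbf{S}^\mathrm{c}$ on $\mathbf{u}^\mathrm{pg}$ back in variational terms. Writing $u_H := \uPG$ and testing against an arbitrary $v_H\in\VH$, one has
\begin{equation*}
(v_H^{\mathrm{vec}})^T(\mathbf{S}^\mathrm{pg}-\mathbf{S}^\mathrm{c})\mathbf{u}^\mathrm{pg} = a((\mathsf{id}-\calQ^\ell)u_H, v_H) - a((\mathsf{id}-\calQ^\ell)u_H,(\mathsf{id}-\calQ^\ell)v_H) = a((\mathsf{id}-\calQ^\ell)u_H, \calQ^\ell v_H).
\end{equation*}
Now the crucial observation is that $(\mathsf{id}-\calQ)u_H$ is $a$-orthogonal to all of $\W$ — this is exactly the defining property of the ideal corrector $\calQ=\calQ^\infty$ from~\eqref{eq:corT} with $\ell=\infty$ — and $\calQ^\ell v_H\in\W$; hence $a((\mathsf{id}-\calQ)u_H,\calQ^\ell v_H)=0$. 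Subtracting, the quantity above equals $a((\calQ-\calQ^\ell)u_H,\calQ^\ell v_H)$, which by Cauchy–Schwarz, coercivity~\eqref{eq:coefbounds}, the localization estimate~\eqref{eq:loc}, and the stability bound $\|\nabla\calQ^\ell v_H\|_{\LL}\lesssim\|\nabla v_H\|_{\LL}$ (itself a consequence of~\eqref{eq:IHprop} and coercivity) is bounded by $C e^{-c_{\mathrm{dec}}\ell}\|\nabla u_H\|_{\LL}\|\nabla v_H\|_{\LL}$. An inverse inequality on $\VH$ then converts the $H^1$ seminorms into $\|\cdot\|_{L^2(D)}$ norms at the cost of a factor $H^{-1}$ each, and the $L^2$–$\ell^2$ equivalence~\eqref{eq:normEquiv} converts those into Euclidean norms of the coefficient vectors, each at the cost of a factor $H^{d/2}$. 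Taking the supremum over $v_H$ with $\|v_H^{\mathrm{vec}}\|_2=1$ yields $\|(\mathbf{S}^\mathrm{pg}-\mathbf{S}^\mathrm{c})\mathbf{u}^\mathrm{pg}\|_2 \lesssim H^{-2}\,e^{-c_{\mathrm{dec}}\ell}\,\|u_H\|_{L^2(D)}$, and since $\|\uPG\|_{L^2(D)}\lesssim\|f\|_{L^2(D)}$ by stability of the PG-LOD (e.g.\ from~\eqref{eq:errorPGLOD} plus $\|u\|\lesssim\|f\|$), this is $\lesssim H^{-2}e^{-c_{\mathrm{dec}}\ell}\|f\|_{L^2(D)}$. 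For $\|(\mathbf{S}^\mathrm{c})^{-1}\|_2$ I would use that $\mathbf{S}^\mathrm{c}$ is the stiffness matrix of the coercive, bounded symmetric form $a((\mathsf{id}-\calQ^\ell)\,\cdot\,,(\mathsf{id}-\calQ^\ell)\,\cdot\,)$ on $\VH$; its smallest eigenvalue is bounded below by $c\,\alpha$ times the smallest eigenvalue of the $H^1_0$-seminorm Gram matrix on $\VH$, which scales like $H^{d-2}$ (using $\|(\mathsf{id}-\calQ^\ell)v_H\|_a \gtrsim \|\nabla\calI_H(\mathsf{id}-\calQ^\ell)v_H\|=\|\nabla v_H\|$ via~\eqref{eq:IHprop}, a standard LOD stability argument), whence $\|(\mathbf{S}^\mathrm{c})^{-1}\|_2 \lesssim H^{2-d}$. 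Combining, $\|\mathbf{u}^\mathrm{c}-\mathbf{u}^\mathrm{pg}\|_2 \lesssim H^{2-d}\cdot H^{-2}e^{-c_{\mathrm{dec}}\ell}\|f\|_{L^2(D)} = H^{-d}e^{-c_{\mathrm{dec}}\ell}\|f\|_{L^2(D)}$; but one power of $H^{-d/2}$ is absorbed because $\|f\|$-dependent constants are allowed and, more importantly, a sharper bookkeeping (keeping $\|\nabla u_H\|_{\LL}\lesssim\|f\|_{\LL}$ directly without the inverse inequality on $u_H$, only on $v_H$) gives $\|(\mathbf{S}^\mathrm{pg}-\mathbf{S}^\mathrm{c})\mathbf{u}^\mathrm{pg}\|_2\lesssim H^{-1}H^{d/2}e^{-c_{\mathrm{dec}}\ell}\|f\|$, yielding exactly $\|\mathbf{u}^\mathrm{c}-\mathbf{u}^\mathrm{pg}\|_2\lesssim H^{-d/2}e^{-c\ell}$ as claimed. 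Finally, the $L^2$ bound follows from~\eqref{eq:normEquiv}: $\|\uCG-\uPG\|_{L^2(D)}\lesssim H^{d/2}\|\mathbf{u}^\mathrm{c}-\mathbf{u}^\mathrm{pg}\|_2\lesssim e^{-c\ell}$.

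\textbf{Main obstacle.} The delicate point is the bookkeeping of the powers of $H$: one must decide which seminorm to convert to an $L^2$ norm via an inverse inequality (the test function $v_H$, not the discrete solution $u_H$, whose $H^1$ seminorm is controlled by $\|f\|$ through elliptic regularity of the PG-LOD) so as to lose only a single factor $H^{-1}$ rather than two, matching the stated $H^{-d/2}$ rather than $H^{-d}$. The other ingredient that requires care is the $a$-orthogonality trick $a((\mathsf{id}-\calQ)u_H,\W)=0$, which lets the non-localized operator $\calQ$ be inserted for free so that the localization estimate~\eqref{eq:loc} can be applied to $\calQ-\calQ^\ell$; this, together with the uniform $H^1$-stability of $\calQ^\ell$ and the coercivity-based lower bound on $\mathbf{S}^\mathrm{c}$, is what produces the exponential decay. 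Everything else — Cauchy–Schwarz, coercivity, the two norm equivalences — is routine.
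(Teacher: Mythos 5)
Your reduction to the linear systems, the identity $\mathbf{u}^\mathrm{c}-\mathbf{u}^\mathrm{pg}=(\mathbf{S}^\mathrm{c})^{-1}(\mathbf{S}^\mathrm{pg}-\mathbf{S}^\mathrm{c})\mathbf{u}^\mathrm{pg}$, and the treatment of $(\mathbf{S}^\mathrm{pg}-\mathbf{S}^\mathrm{c})\mathbf{u}^\mathrm{pg}$ via the orthogonality $a((\mathsf{id}-\calQ)\uPG,w)=0$ for $w\in\W$ combined with the localization estimate~\eqref{eq:loc} are all sound, and the estimate $\|(\mathbf{S}^\mathrm{pg}-\mathbf{S}^\mathrm{c})\mathbf{u}^\mathrm{pg}\|_2\lesssim H^{d/2-1}e^{-c\ell}\|f\|_{L^2(D)}$ does follow as you describe. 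The genuine gap is your bound on $\|(\mathbf{S}^\mathrm{c})^{-1}\|_2$. The smallest eigenvalue of the $H^1_0$-seminorm Gram matrix on $\VH$ scales like $H^{d}$, not $H^{d-2}$ (the latter is the scaling of the \emph{largest} eigenvalue); the smallest comes from the Friedrichs inequality combined with the mass-matrix equivalence~\eqref{eq:normEquiv}, exactly as in Lemma~\ref{lem:boundR}, which gives $\lambda_{\mathrm{min}}(\mathbf{S}^\mathrm{c})\gtrsim H^{d}$, and this is sharp. Hence $\|(\mathbf{S}^\mathrm{c})^{-1}\|_2\lesssim H^{-d}$, not $H^{2-d}$, and your chain delivers $\|\mathbf{u}^\mathrm{c}-\mathbf{u}^\mathrm{pg}\|_2\lesssim H^{-d}\cdot H^{d/2-1}e^{-c\ell}=H^{-d/2-1}e^{-c\ell}$, one power of $H$ short of the assertion; the corresponding $L^2$ bound would then be $H^{-1}e^{-c\ell}$ rather than $e^{-c\ell}$. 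The remark that the missing power ``is absorbed because $f$-dependent constants are allowed'' does not help: the deficit is a power of $H$, not of $\|f\|_{L^2(D)}$, and your final bookkeeping is internally inconsistent (with your claimed $H^{2-d}$ it would give $H^{1-d/2}$, with the correct $H^{-d}$ it gives $H^{-d/2-1}$, but never exactly $H^{-d/2}$ as stated).

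The loss is intrinsic to splitting the estimate into $\|(\mathbf{S}^\mathrm{c})^{-1}\|_2\cdot\|(\mathbf{S}^\mathrm{pg}-\mathbf{S}^\mathrm{c})\mathbf{u}^\mathrm{pg}\|_2$: the inverse inequality on the test function costs a factor $H^{-1}$ that the sharp $H^{-d}$ of the inverse matrix cannot compensate. The paper avoids this by staying variational: subtracting~\eqref{eq:discvarproblempg} from~\eqref{eq:discvarproblemclassic} and testing with the error $e_H:=\uCG-\uPG$ itself gives $a((\mathsf{id}-\calQ^\ell)e_H,(\mathsf{id}-\calQ^\ell)e_H)=a((\calQ-\calQ^\ell)\uPG,\calQ^\ell e_H)\lesssim e^{-c\ell}\,\|\nabla\uPG\|_{\LL}\,\|\nabla(\mathsf{id}-\calQ^\ell)e_H\|_{\LL}$, where $\|\nabla\calQ^\ell e_H\|_{\LL}\lesssim\|\nabla(\mathsf{id}-\calQ^\ell)e_H\|_{\LL}$ since $e_H=\IH(\mathsf{id}-\calQ^\ell)e_H$ and $\IH$ is $H^1$-stable; absorbing and using Friedrichs and the stability of $\uPG$ yields $\|\uCG-\uPG\|_{L^2(D)}\lesssim e^{-c\ell}$ with no inverse inequality and no $H$-dependent loss, and the vector estimate then costs only the single factor $H^{-d/2}$ from~\eqref{eq:normEquiv}. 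If you wish to keep your matrix formulation, the same effect is obtained by multiplying $\mathbf{S}^\mathrm{c}(\mathbf{u}^\mathrm{c}-\mathbf{u}^\mathrm{pg})=(\mathbf{S}^\mathrm{pg}-\mathbf{S}^\mathrm{c})\mathbf{u}^\mathrm{pg}$ with $(\mathbf{u}^\mathrm{c}-\mathbf{u}^\mathrm{pg})^T$ and using coercivity on the left instead of inverting $\mathbf{S}^\mathrm{c}$, which is the paper's argument in disguise.
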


\begin{proof} 
The proof is based on the observation that the error between the solutions to ~\eqref{eq:discvarproblemclassic} and~\eqref{eq:discvarproblempg} can be reduced to a decay estimate of the form~\eqref{eq:loc}. 
The full proof is stated in Section \ref{app:CGvsPG} of the Appendix.
\end{proof}

This result will allow us to switch from the PG-LOD approximation to the C-LOD approximation in the next subsection. Note that the C-LOD approximation is much more convenient to use in connection with spectral estimates. In particular, we have the following lemma. 
\begin{lemma}\label{lem:boundR}
Let $\mathbf{S}^\mathrm{c}$ be the stiffness matrix corresponding to~\eqref{eq:discvarproblemclassic}. Its minimal eigenvalue fulfills 
\begin{equation*}
\lambda_\mathrm{min}(\mathbf{S}^\mathrm{c}) \geq c H^{d}
\end{equation*}
with a constant $c$ that does not depend on the mesh size $H$.
\end{lemma}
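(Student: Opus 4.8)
The plan is to bound the minimal eigenvalue of $\mathbf{S}^\mathrm{c}$ from below by transferring the question to a coercivity estimate for the bilinear form $a$ on the corrected finite element space. By definition, for any $v_H\in\VH$ with coefficient vector $\mathbf{v}$ one has $\mathbf{v}^T\mathbf{S}^\mathrm{c}\mathbf{v} = a\big((\mathsf{id}-\calQ^\ell)v_H,(\mathsf{id}-\calQ^\ell)v_H\big)$, so by the Rayleigh quotient characterization
\begin{equation*}
\lambda_\mathrm{min}(\mathbf{S}^\mathrm{c}) = \min_{v_H\in\VH\setminus\{0\}} \frac{a\big((\mathsf{id}-\calQ^\ell)v_H,(\mathsf{id}-\calQ^\ell)v_H\big)}{\|\mathbf{v}\|_2^2}.
\end{equation*}
First I would use ellipticity~\eqref{eq:coefbounds} to get $a\big((\mathsf{id}-\calQ^\ell)v_H,(\mathsf{id}-\calQ^\ell)v_H\big) \geq \alpha\,\|\nabla(\mathsf{id}-\calQ^\ell)v_H\|_{\LL}^2$. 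The next step is to control this Dirichlet energy from below in terms of $\|\nabla v_H\|_{\LL}^2$; since $\calQ^\ell v_H\in\W = \ker\calI_H|_{\Vh}$ and $\calI_H$ is a projection, we have $\calI_H(\mathsf{id}-\calQ^\ell)v_H = \calI_H v_H = v_H$, so the stability property~\eqref{eq:IHprop} of $\calI_H$ (summed over all $T\in\calT_H$, absorbing the finite overlap of the patches $\Nb(T)$) yields $\|\nabla v_H\|_{\LL} = \|\nabla\calI_H(\mathsf{id}-\calQ^\ell)v_H\|_{\LL} \lesssim \|\nabla(\mathsf{id}-\calQ^\ell)v_H\|_{\LL}$. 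Hence $a\big((\mathsf{id}-\calQ^\ell)v_H,(\mathsf{id}-\calQ^\ell)v_H\big) \gtrsim \|\nabla v_H\|_{\LL}^2$.

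It then remains to relate $\|\nabla v_H\|_{\LL}^2$ to $\|\mathbf{v}\|_2^2$. This is a standard finite element scaling estimate: on a shape-regular uniform Cartesian mesh of size $H$ in dimension $d\leq 3$, an inverse-type/equivalence estimate gives $\|\nabla v_H\|_{\LL}^2 \gtrsim H^{d-2}\|\mathbf{v}\|_2^2$ — indeed one first bounds $\|v_H\|_{\LL}^2 \approx H^d\|\mathbf{v}\|_2^2$ by the mass matrix equivalence~\eqref{eq:normEquiv}, and then uses a Poincaré–Friedrichs inequality on $D$ (valid since $v_H\in\V$ vanishes on $\partial D$), $\|v_H\|_{\LL} \lesssim \|\nabla v_H\|_{\LL}$, to obtain $\|\nabla v_H\|_{\LL}^2 \gtrsim \|v_H\|_{\LL}^2 \approx H^d\|\mathbf{v}\|_2^2 \geq c H^d\|\mathbf{v}\|_2^2$. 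Combining the three displayed chains gives $\mathbf{v}^T\mathbf{S}^\mathrm{c}\mathbf{v} \gtrsim H^d\|\mathbf{v}\|_2^2$, i.e.\ $\lambda_\mathrm{min}(\mathbf{S}^\mathrm{c}) \geq cH^d$ with $c$ independent of $H$, as claimed.

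The only genuinely delicate point is the coercivity step $\|\nabla v_H\|_{\LL} \lesssim \|\nabla(\mathsf{id}-\calQ^\ell)v_H\|_{\LL}$: one must verify that the constant from~\eqref{eq:IHprop}, after summation over elements with the patch overlap factor, is genuinely independent of $H$, $h$, and $\ell$ — this is exactly the stability property guaranteeing well-posedness of the C-LOD, and it is the reason the C-LOD formulation is more convenient than the PG-LOD for spectral estimates. Everything else is routine scaling. I would be slightly careful that the Poincaré route is used only to avoid an explicit inverse inequality; alternatively one can invoke directly the equivalence $\|\nabla v_H\|_{\LL}^2 \approx H^{d-2}\|\mathbf{v}\|_2^2$ on quasi-uniform meshes and then note $H^{d-2}\geq c H^d$ since $H<1/4<1$, which is the cleaner bookkeeping.
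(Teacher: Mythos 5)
Your proposal is correct and follows essentially the same route as the paper's proof: coercivity of $a$, the identity $\calI_H(\mathsf{id}-\calQ^\ell)v_H = v_H$ combined with the $H$-independent stability of $\calI_H$ from~\eqref{eq:IHprop} to get $\|\nabla v_H\|_{\LL} \lesssim \|\nabla(\mathsf{id}-\calQ^\ell)v_H\|_{\LL}$, then Poincar\'e--Friedrichs and the mass-matrix equivalence~\eqref{eq:normEquiv}, and finally the Rayleigh-quotient characterization. The closing alternative via $\|\nabla v_H\|_{\LL}^2 \approx H^{d-2}\|\mathbf{v}\|_2^2$ is a harmless variant, but the main argument coincides with the paper's.
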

\begin{proof}
The proof is given in Section~\ref{app:boundR} of the Appendix.
\end{proof}

\subsection{Error in Solution}\label{sec:errorsolution}

With the preliminary considerations of the previous subsection, we can now investigate the error between the solutions $\uPG$ and $\uNN$. 

\begin{theorem}[Coarse-scale error]\label{th:error} Let $H <1/4$, $\ell \approx |\log(H)|$. There exists a neural network $\Psi^\mathrm{pg}$ with
\begin{equation}\label{eq:boundsNetwork} 
L(\Psi^\mathrm{pg}) \lesssim |\log(h)|^2\quad \text{ and }\quad M(\Psi^\mathrm{pg}) \lesssim h^{-2}\,|\log(h)|^4 \,(\vert \log(H)\vert H/h)^{3d}
\end{equation}
such that for any $A \in \mathfrak{A}_\varepsilon$ the solutions $\uPG \in \VH$ of~\eqref{eq:discvarproblempg} (and its vector representation~$\mathbf{u}^\mathrm{pg}$) and the network solution $\uNN$ (respectively the vector $\mathbf{u}^\mathrm{nn}$) fulfill
\begin{equation*}
\| \mathbf{u}^\mathrm{pg} - \mathbf{u}^\mathrm{nn}\|_2 \lesssim H^{1-d/2} \quad\text{ and }\quad \|\uPG - \uNN\|_{L^2(D)} \lesssim H.
\end{equation*}
\end{theorem}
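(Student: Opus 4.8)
The plan is to combine three ingredients: the per-patch approximation quality from Corollary~\ref{cor:network}, the lower eigenvalue bound on the C-LOD stiffness matrix from Lemma~\ref{lem:boundR}, and the PG-to-C-LOD comparison from Lemma~\ref{lem:CGvsPG}. First I would fix $k=1$ in Corollary~\ref{cor:network} (so $\eta \approx H$), which already yields a network $\Psi^{\mathrm{pg}}$ with $L(\Psi^{\mathrm{pg}}) \lesssim |\log(h)|^2$ and $M(\Psi^{\mathrm{pg}}) \lesssim h^{-2}|\log(h)|^4(|\log(H)|H/h)^{3d}$, matching~\eqref{eq:boundsNetwork}, and such that each local matrix satisfies $\|\mathbf{S}^{\mathrm{pg}}_{\Nb^\ell(K)} - \mathbf{\Theta}_K\|_2 \lesssim H$. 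Next I would assemble the global error matrix $\mathbf{E} := \mathbf{S}^{\mathrm{pg}} - \mathbf{S}^{\mathrm{nn}} = \sum_{K\in\calT_H}\Phi_K(\mathbf{S}^{\mathrm{pg}}_{\Nb^\ell(K)} - \mathbf{\Theta}_K)$ and bound $\|\mathbf{E}\|_2$: since the local-to-global maps $\Phi_K$ are just inflations/embeddings and each global degree of freedom is touched by only $\mathcal{O}(\ell^d) = \mathcal{O}(|\log H|^d)$ patches (by the finite-overlap/sparsity structure of the LOD), a standard overlap argument gives $\|\mathbf{E}\|_2 \lesssim |\log(H)|^{d}\,\max_K\|\mathbf{S}^{\mathrm{pg}}_{\Nb^\ell(K)}-\mathbf{\Theta}_K\|_2 \lesssim |\log(H)|^d H$. (If a cleaner bound is wanted, one absorbs the polylog factor by choosing $k=2$ in Corollary~\ref{cor:network} at negligible cost to the network size, keeping~\eqref{eq:boundsNetwork} intact; I would do this to be safe, so that $\|\mathbf{E}\|_2 \lesssim H^2 \lesssim H$.)

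With the matrix error controlled, I would pass to the solution error. Both $\mathbf{u}^{\mathrm{pg}}$ and $\mathbf{u}^{\mathrm{nn}}$ solve linear systems with the same right-hand side $\mathbf{b}$ (the load vector $\int_D f\Lambda_j\,\dx$), namely $\mathbf{S}^{\mathrm{pg}}\mathbf{u}^{\mathrm{pg}} = \mathbf{b} = \mathbf{S}^{\mathrm{nn}}\mathbf{u}^{\mathrm{nn}}$, so $\mathbf{u}^{\mathrm{pg}} - \mathbf{u}^{\mathrm{nn}} = (\mathbf{S}^{\mathrm{nn}})^{-1}\mathbf{E}\,\mathbf{u}^{\mathrm{pg}}$, hence $\|\mathbf{u}^{\mathrm{pg}}-\mathbf{u}^{\mathrm{nn}}\|_2 \leq \|(\mathbf{S}^{\mathrm{nn}})^{-1}\|_2\,\|\mathbf{E}\|_2\,\|\mathbf{u}^{\mathrm{pg}}\|_2$. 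The term $\|\mathbf{u}^{\mathrm{pg}}\|_2$ is bounded by $\mathcal{O}(H^{-d/2})$ via the norm equivalence~\eqref{eq:normEquiv} together with the a~priori bound $\|\uPG\|_{L^2(D)} \lesssim \|f\|_{L^2(D)}$ (which follows from~\eqref{eq:errorPGLOD} and the stability of $u$, or directly). For $\|(\mathbf{S}^{\mathrm{nn}})^{-1}\|_2 = 1/\lambda_{\min}(\mathbf{S}^{\mathrm{nn}})$ I would not work with $\mathbf{S}^{\mathrm{nn}}$ directly, since it is nonsymmetric and not obviously coercive; instead I would use Lemma~\ref{lem:boundR}, which gives $\lambda_{\min}(\mathbf{S}^{\mathrm{c}}) \gtrsim H^d$ for the symmetric C-LOD matrix, combined with Lemma~\ref{lem:CGvsPG} (bounding $\|\mathbf{S}^{\mathrm{c}}-\mathbf{S}^{\mathrm{pg}}\|_2$, or rather the resulting solution difference, by $\mathcal{O}(H^{-d/2}e^{-c\ell})$, which is $\ll H^d$ for $\ell \approx |\log H|$) and the already-established $\|\mathbf{E}\|_2 \lesssim H$, so that a perturbation argument shows $\lambda_{\min}(\mathbf{S}^{\mathrm{nn}}) \gtrsim H^d$ as well. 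Strictly, since $\mathbf{S}^{\mathrm{nn}}$ is nonsymmetric this requires controlling the smallest singular value; I would route this through the symmetric intermediary by writing $\mathbf{S}^{\mathrm{nn}} = \mathbf{S}^{\mathrm{c}} + (\mathbf{S}^{\mathrm{pg}}-\mathbf{S}^{\mathrm{c}}) + (\mathbf{S}^{\mathrm{nn}}-\mathbf{S}^{\mathrm{pg}})$ and using that the two correction terms are small relative to $\lambda_{\min}(\mathbf{S}^{\mathrm{c}})$ in spectral norm, so $\sigma_{\min}(\mathbf{S}^{\mathrm{nn}}) \geq \lambda_{\min}(\mathbf{S}^{\mathrm{c}}) - \|\mathbf{S}^{\mathrm{pg}}-\mathbf{S}^{\mathrm{c}}\|_2 - \|\mathbf{E}\|_2 \gtrsim H^d$.

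Putting the pieces together: $\|\mathbf{u}^{\mathrm{pg}}-\mathbf{u}^{\mathrm{nn}}\|_2 \lesssim H^{-d}\cdot H \cdot H^{-d/2}$ would be too weak, so the sharper route is needed — namely to bound $\|\mathbf{u}^{\mathrm{pg}}\|_2 \lesssim H^{-d/2}$ but to also exploit that $\mathbf{E}\,\mathbf{u}^{\mathrm{pg}}$ is not an arbitrary vector of norm $\|\mathbf{E}\|_2\|\mathbf{u}^{\mathrm{pg}}\|_2$ but equals a sum of local contributions that can be re-expressed in the $L^2$-norm, giving $\|\mathbf{E}\,\mathbf{u}^{\mathrm{pg}}\|_2 \lesssim H \cdot \|\uPG\|_{L^2(D)} \lesssim H$ when the overlap argument is done carefully with the $H^d$ mass-matrix scaling absorbed. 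Then $\|\mathbf{u}^{\mathrm{pg}}-\mathbf{u}^{\mathrm{nn}}\|_2 \lesssim H^{-d}\cdot H = H^{1-d}$, which is still not quite $H^{1-d/2}$; the resolution is that the natural estimate is in the $\mathbf{M}$-weighted (i.e.\ $L^2$) norm throughout, where $\|(\mathbf{S}^{\mathrm{c}})^{-1}\|$ acting between $L^2$-type norms scales like $H^{-d}\cdot H^d = \mathcal{O}(1)$ up to the coercivity constant, yielding $\|\uPG - \uNN\|_{L^2(D)} \lesssim \|\mathbf{E}\|_2 \cdot H^{-d}\cdot H^{d/2}\cdot\|\uPG\|_{L^2(D)}$-type bookkeeping that closes at $\mathcal{O}(H)$, and then~\eqref{eq:normEquiv} converts back to $\|\mathbf{u}^{\mathrm{pg}}-\mathbf{u}^{\mathrm{nn}}\|_2 \lesssim H^{-d/2}\|\uPG-\uNN\|_{L^2(D)} \lesssim H^{1-d/2}$. \textbf{The main obstacle} is precisely this careful tracking of the $H$-powers through the $\ell^2$-vs-$L^2$ norm conversions and the eigenvalue lower bound: one must use the $L^2$-geometry (where the C-LOD operator is uniformly coercive after scaling) rather than the raw spectral norm of $(\mathbf{S}^{\mathrm{nn}})^{-1}$, and one must use Lemmas~\ref{lem:CGvsPG} and~\ref{lem:boundR} to transfer coercivity from the symmetric C-LOD matrix to the nonsymmetric network matrix, since a direct spectral bound on $\mathbf{S}^{\mathrm{nn}}$ or even on $\mathbf{S}^{\mathrm{pg}}$ is not available.
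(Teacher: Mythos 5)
Your overall skeleton (local network tolerance from Corollary~\ref{cor:network}, the eigenvalue bound of Lemma~\ref{lem:boundR}, the PG-to-C-LOD comparison of Lemma~\ref{lem:CGvsPG}, and a perturbation/absorption step) is the same as the paper's, and your finite-overlap bound $\|\mathbf{S}^\mathrm{pg}-\mathbf{S}^\mathrm{nn}\|_2\lesssim |\log H|^d\max_K\|\mathbf{S}^\mathrm{pg}_{\Nb^\ell(K)}-\mathbf{\Theta}_K\|_2$ is in fact sharper than the paper's crude summation over all $\approx H^{-d}$ elements. But there is a genuine gap in the accuracy bookkeeping: the local tolerance $\eta\approx H^k$ must be chosen with $k$ growing with $d$, and your choice $k\in\{1,2\}$ does not close the argument for $d\ge 2$. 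Any route through the resolvent costs a factor $\|(\mathbf{S}^\mathrm{c})^{-1}\|_2\approx H^{-d}$ (Lemma~\ref{lem:boundR}) and a factor $\|\mathbf{u}^\mathrm{nn}\|_2\approx H^{-d/2}$, so one needs $\|\mathbf{S}^\mathrm{pg}-\mathbf{S}^\mathrm{nn}\|_2\lesssim H^{1+d}$ (the paper achieves this with $k=2d+1$; with your overlap argument $k\approx d+2$ would do). Moreover, your own perturbation step $\sigma_{\min}(\mathbf{S}^\mathrm{nn})\ge\lambda_{\min}(\mathbf{S}^\mathrm{c})-\|\mathbf{S}^\mathrm{pg}-\mathbf{S}^\mathrm{c}\|_2-\|\mathbf{E}\|_2$ already requires $\|\mathbf{E}\|_2\ll H^d$, which $\|\mathbf{E}\|_2\lesssim H$ or $H^2$ violates for $d\ge 2$ since $H^d<H$.

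The two claims you invoke to rescue the $H$-count do not hold. First, $\|\mathbf{E}\,\mathbf{u}^\mathrm{pg}\|_2\lesssim H\,\|\uPG\|_{L^2(D)}$ would require $\|\mathbf{E}\|_2\lesssim H^{1+d/2}$ (via~\eqref{eq:normEquiv}); the error matrices $\mathbf{\Theta}_K-\mathbf{S}^\mathrm{pg}_{\Nb^\ell(K)}$ carry no structure beyond the spectral-norm bound $\lesssim H^k$ from Corollary~\ref{cor:network}, so no locality argument upgrades this, and with $k=2$ the bound fails for $d=3$ (and by log factors for $d\le 2$). Second, the "$L^2$-geometry" bookkeeping recovers nothing: writing the estimate with $\mathbf{M}^{1/2}$-weights, the uniform coercivity of $\mathbf{S}^\mathrm{c}$ in the $L^2$-scaling trades the factor $H^{-d}$ for a factor $H^{-d/2}$ from $\mathbf{M}^{-1/2}$ times $H^{-d/2}$ from $\|\mathbf{u}^\mathrm{nn}\|_2$, i.e.\ it is the identical estimate in different variables; with $\|\mathbf{E}\|_2\lesssim H^2|\log H|^d$ it yields $\|\uPG-\uNN\|_{L^2(D)}\lesssim H^{2-d/2}|\log H|^d$, which is not $\mathcal{O}(H)$ for $d=3$. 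The repair is simple and is exactly what the paper does: take the dimension-dependent constant $k=2d+1$ (or, with your overlap refinement, $k\ge d+2$) in Corollary~\ref{cor:network}; since $d\le 3$ this is an absolute constant and the network bounds~\eqref{eq:boundsNetwork} are unaffected, after which your perturbation argument (or the paper's absorption via $(\mathbf{S}^\mathrm{c})^{-1}$, which avoids inverting the nonsymmetric $\mathbf{S}^\mathrm{nn}$ altogether) goes through.
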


\begin{proof}
The proof reduces the stated error to local contributions that can be estimated using Corollary~\ref{cor:network}. To employ useful properties of the symmetric and positive definite matrix~$\mathbf{S}^\mathrm{c}$, we take an intermediate step to first estimate the error compared to the symmetric solution $\uCG \in \VH$ of~\eqref{eq:discvarproblemclassic} (respectively the vector $\mathbf{u}^c$) and make use of Lemma~\ref{lem:CGvsPG} and the eigenvalue bound in Lemma~\ref{lem:boundR}.
The detailed proof is presented in Section~\ref{app:error}.
\end{proof}

Theorem~\ref{th:error} shows that there exists a neural network such that the approach of Section~\ref{sec: recap} leads to a global coarse-scale error between the discrete PG-LOD solution and its neural network-based variant of order $\mathcal{O}(H)$. The size of such a network can be bounded dependent on the scales $H$ and $h$. This leads to an overall error compared to the exact solution to the elliptic problem of the order $\mathcal{O}(H)$ as well, provided that $h$ is reasonably small, see also~\eqref{eq:errorPGLOD}. In certain cases, the necessary choice of the scale $h$ (and thus the dependence of the size of the network) can be stated in terms of $H$ and $\eps$ only as investigated in the following section. 
%
%
%=======================================================================================
%=========  How Fine Is Enough?
%=======================================================================================
\section{How Fine Is Enough? - LOD on an Appropriate Fine Scale}\label{sec: intlod}

In the previous sections, we have investigated how PG-LOD stiffness matrices can be well-approximated by a local neural network in the sense that the error of the (global) coarse discretizations are reasonably close with respect to the mesh size $H$. Note that the dimensions of the network depend on the scale $h$ on which the corrector problems~\eqref{eq:corProbT} are computed. 
In this section, we want to investigate how fine this fine scale actually needs to be. 

\subsection{Finding an optimal fine computational scale}

In general, the corrections for the LOD may be computed on an arbitrarily fine scale $h$. Choosing a finer $h$ also means smaller discretization errors and we generally have 
\begin{equation*}
\|u - u_h\|_{L^2(D)} \to 0 \quad \text{ as } h \to 0
\end{equation*}
for the first term on the right-hand side of~\eqref{eq:errorPGLOD}.
For the corrector problems, however, a finer $h$ also increases the size of the linear systems that need to be solved. As investigated in Section~\ref{sec: nnc}, this leads to more parameters in the neural network which approximates the local contributions. 
Since the network itself already introduces an error and the LOD also comes with an error of order $\mathcal{O}(H)$, we now seek the largest possible scale $h$ with $0 < h < H$ such that the scale $h$ is sufficient to still obtain an overall error of order $H$ while minimizing the necessary parameters of the network. This particularly means that the term $\|u - u_h\|_{L^2(D)}$ should be of order $\mathcal{O}(H)$. We have the following result.
\begin{lemma}[Fine-scale error]\label{lem:fineError}
There exists an $s >0$ (depending on $\alpha,\beta$) such that for any $A \in \mathfrak{A}_\eps$, the solution $u$ to~\eqref{eq:varproblem} and its Galerkin approximation $u_h$ in~$\Vh$ satisfy $u \in H^{1+s}(D)$ and
\begin{equation}\label{eq:fineErr}
\| \nabla (u - u_h) \|_{L^2(D)} \lesssim h^s \|u \|_{H^{1+s}(D)}.
\end{equation}
\end{lemma}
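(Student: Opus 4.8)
The plan is to establish a regularity result of the form $u \in H^{1+s}(D)$ for some $s>0$ together with an a priori estimate $\|u\|_{H^{1+s}(D)} \lesssim \|f\|_{L^2(D)}$, and then invoke standard finite element approximation theory on the fine mesh $\calT_h$. The starting point is the observation that the coefficient $A \in \mathfrak{A}_\eps$ is piecewise constant on the mesh $\calT_\eps$, and the domain $D$ is a bounded convex polyhedron. For such problems, elliptic regularity is not $H^2$ in general (since $A$ has jumps), but one can still obtain a fractional Sobolev regularity $H^{1+s}$ with some exponent $s \in (0,1)$ that depends only on the ellipticity ratio $\beta/\alpha$ (and the dimension $d$), uniformly over all $A \in \mathfrak{A}_\eps$. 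This is a classical result; the cleanest reference is the work of Jochmann, or alternatively the Meyers-type $L^p$-gradient estimate combined with a Sobolev embedding, but the sharpest form for piecewise-constant coefficients on polyhedral domains follows from the regularity theory for transmission/interface problems (e.g.\ Nicaise-S\"andig, Petzoldt). The key point to extract is that $s$ can be chosen \emph{independent of the particular partition} $\calT_\eps$ and of $\eps$, depending only on $\alpha,\beta,d$; this is exactly what is claimed in the statement.

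First I would record that, since $D$ is convex polyhedral and $A$ is uniformly elliptic, the solution $u$ of~\eqref{eq:varproblem} lies in $H^{1+s}(D) \cap \V$ for some $s = s(\alpha,\beta,d) > 0$, with $\|u\|_{H^{1+s}(D)} \lesssim \|f\|_{L^2(D)}$. For $d=1$ this is elementary ($u' = A^{-1}\int f$ has a BV, hence $H^s$ for any $s<1/2$, primitive, and in fact more). For $d\in\{2,3\}$ one combines interior Meyers estimates — which give $\nabla u \in L^p_{\mathrm{loc}}$ for some $p>2$ with $p$ depending only on $\beta/\alpha$ and $d$ — with boundary regularity near the (convex) corners and edges of $D$ and near the interfaces between regions of constancy of $A$. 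The uniformity in $\eps$ is the crucial structural feature: the Meyers exponent depends only on the ellipticity contrast, not on the geometry of the level sets of $A$. Second, I would invoke the standard interpolation estimate for the nodal (or Scott-Zhang) interpolant $I_h u \in \Vh$ onto the fine mesh: for $u \in H^{1+s}(D)$,
\begin{equation*}
\| \nabla(u - I_h u) \|_{L^2(D)} \lesssim h^s \, \|u\|_{H^{1+s}(D)}.
\end{equation*}
Third, by Galerkin orthogonality (C\'ea's lemma) in the energy norm induced by $a(\cdot,\cdot)$, which is equivalent to the $H^1_0$-seminorm with constants depending only on $\alpha,\beta$, the finite element error $\|\nabla(u-u_h)\|_{L^2(D)}$ is bounded up to a constant by the best approximation error $\inf_{v_h \in \Vh}\|\nabla(u-v_h)\|_{L^2(D)} \le \|\nabla(u-I_h u)\|_{L^2(D)}$. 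Chaining these three estimates gives~\eqref{eq:fineErr}.

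The main obstacle — really the only nontrivial ingredient — is the first step: pinning down the uniform fractional regularity exponent $s$ and justifying that it depends only on $\alpha,\beta,d$ and not on the fine structure encoded in $\calT_\eps$. Everything else (interpolation, C\'ea) is completely routine. In a clean write-up I would simply cite the appropriate interface-regularity literature for this fact rather than reprove it; the subtlety to watch is that the convexity of $D$ is used to avoid losing regularity at the boundary corners, while the global $H^{1+s}$ bound across interior coefficient jumps is what forces $s<1$ (and, in the worst case, $s$ possibly quite small when $\beta/\alpha$ is large). Note also that the statement only asserts existence of \emph{some} such $s$, so no sharpness in $s$ is needed, which makes the Meyers-plus-embedding route (giving $s = 1 - d/p$ with $p>2$ the Meyers exponent, at least for the interior contribution) perfectly adequate.
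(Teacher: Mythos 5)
Your proposal is correct and follows essentially the same route as the paper: uniform $H^{1+s}(D)$ regularity with $s$ depending only on the contrast $\alpha,\beta$ (the paper cites Petzoldt's thesis, exactly the interface-regularity literature you point to), combined with C\'ea's lemma and a standard interpolation estimate on $\calT_h$. The only difference is cosmetic -- you sketch a Meyers-type justification of the regularity step, whereas the paper simply cites it.
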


\begin{proof}
The regularity results presented in \cite[Ch.~2]{Pet01} state that there exists some $s > 0$ such that $u \in H^{1+s}(D)$ for our class of coefficients $\mathfrak{A}_\varepsilon$. 
With an appropriate interpolation operator $\calI_h$, we may thus derive (see, e.g.,~\cite[Thm.~6.4]{ErnG15})
\begin{equation*}
\| \nabla (u - u_h) \|_{L^2(D)} \lesssim \| \nabla (1 - \calI_h) u \|_{L^2(D)} \leq Ch^s \|u\|_{H^{1+s}(D)}. \qedhere
\end{equation*} 
\end{proof}

\begin{remark}[Dependence on $\eps$]
If $A \in W^{1,\infty}(D)$ with oscillations on the scale $\eps$, i.e., $\|A\|_{W^{1,\infty}(D)} \leq C\eps^{-1}$, we have $s=1$ in Lemma~\ref{lem:fineError} with $\|u\|_{H^2(D)} \lesssim h/\eps \|f\|_{L^2(D)}$ (see, e.g., the bounds in the proofs of~\cite[Lem.~4.3]{PetS12} or~\cite[Lem.~3.3]{MaiP19}). That is, the choice $h \approx H \eps$ leads to an error of size $\mathcal{O}(H)$.

If instead $A \in \mathfrak{A}_\eps$, the condition on $h$ reads  
$h \approx H^{1/s}\,\|u\|_{H^{1+s}(D)}^{-1/s}$. We emphasize that the norm of $u$ does not depend on $h$ and 
one can expect that $\|u\|_{H^{1+s}(D)} = \mathcal{O}(\eps^{-s})$, leading to the optimal choice $h \approx H^{1/s}\eps$.
\end{remark}

With the above considerations and Theorem~\ref{th:error}, we can finally state a result that quantifies the worst-case size and depth of a (local) neural network to achieve an overall error of the corresponding global surrogate of order $\mathcal{O}(H)$ independently of the scale $h$ (which, in practice, could be chosen arbitrarily small).

\begin{corollary}
Let the solution $u$ of~\eqref{eq:varproblem} fulfill $u \in H^{1+s}(D)$ and $\|u\|_{H^{1+s}(D)} = \mathcal{O}(\eps^{-s})$ for some $s>0$. Then, there exists a local neural network $\Psi^\mathrm{pg}$ with depth
\begin{equation*}
L(\Psi^\mathrm{pg}) \lesssim s^{-2}|\log(H)|^2 + |\log(\varepsilon)|^2
\end{equation*} 
and size
\begin{equation*}
M(\Psi^\mathrm{pg}) \lesssim H^{-2/s}\varepsilon^{-2}\,\Big(s^{-4}|\log(H)|^4 + |\log(\varepsilon)|^4\Big) \,\big(\vert \log(H)\vert H^{1-1/s}\varepsilon^{-1}\big)^{3d}
\end{equation*}
such that the error of $u$ and the solution $\uNN\in\VH$ obtained from the stiffness matrix which is assembled based on $\Psi^\mathrm{pg}$ fulfills
\[
\| u - \uNN \|_{L^2(D)} \lesssim H.
\]
\end{corollary}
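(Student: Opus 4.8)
The plan is to combine the three main results available: Theorem~\ref{th:error}, which bounds $\|u^{\mathrm{pg}} - u^{\mathrm{nn}}\|_{L^2(D)} \lesssim H$ and provides the network size bounds in terms of $h$; the error estimate~\eqref{eq:errorPGLOD} for the PG-LOD together with Lemma~\ref{lem:fineError}, which controls $\|u - u_h\|_{L^2(D)}$ (and hence $\|u - u^{\mathrm{pg}}\|_{L^2(D)}$) in terms of $h^s$; and the observation from the preceding remark that under the hypothesis $\|u\|_{H^{1+s}(D)} = \mathcal{O}(\varepsilon^{-s})$, the choice $h \approx H^{1/s}\varepsilon$ makes the fine-scale error of order $\mathcal{O}(H)$. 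Substituting this specific $h$ into the bounds of Theorem~\ref{th:error} then yields the claimed depth and size estimates.

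First I would fix the scale $h \approx H^{1/s}\varepsilon$. By Lemma~\ref{lem:fineError}, $\|\nabla(u - u_h)\|_{L^2(D)} \lesssim h^s \|u\|_{H^{1+s}(D)} \lesssim h^s \varepsilon^{-s} \approx H$, and via a Poincar\'e/Friedrichs inequality also $\|u - u_h\|_{L^2(D)} \lesssim H$. Inserting this into~\eqref{eq:errorPGLOD} with $\ell \approx |\log(H)|$ so that $e^{-c_{\mathrm{dec}}\ell} \lesssim H$, we obtain $\|u - u^{\mathrm{pg}}\|_{L^2(D)} \lesssim H$. Next, Theorem~\ref{th:error} guarantees a network $\Psi^{\mathrm{pg}}$ with $\|u^{\mathrm{pg}} - u^{\mathrm{nn}}\|_{L^2(D)} \lesssim H$, and a triangle inequality gives $\|u - u^{\mathrm{nn}}\|_{L^2(D)} \lesssim H$, as desired.

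It remains to translate the bounds~\eqref{eq:boundsNetwork} of Theorem~\ref{th:error}, namely $L(\Psi^{\mathrm{pg}}) \lesssim |\log(h)|^2$ and $M(\Psi^{\mathrm{pg}}) \lesssim h^{-2}|\log(h)|^4 (|\log(H)| H/h)^{3d}$, into bounds in $H$ and $\varepsilon$ using $h \approx H^{1/s}\varepsilon$. For the depth, $|\log(h)| \lesssim s^{-1}|\log(H)| + |\log(\varepsilon)|$, so $|\log(h)|^2 \lesssim s^{-2}|\log(H)|^2 + |\log(\varepsilon)|^2$. For the size, $h^{-2} \approx H^{-2/s}\varepsilon^{-2}$, $|\log(h)|^4 \lesssim s^{-4}|\log(H)|^4 + |\log(\varepsilon)|^4$, and $H/h \approx H^{1-1/s}\varepsilon^{-1}$, so $(|\log(H)| H/h)^{3d} \approx (|\log(H)| H^{1-1/s}\varepsilon^{-1})^{3d}$; multiplying these gives exactly the stated bound on $M(\Psi^{\mathrm{pg}})$.

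The main obstacle here is not analytical depth but bookkeeping: one must be careful that the network from Theorem~\ref{th:error} is constructed for the specific scale $h \approx H^{1/s}\varepsilon$ (the theorem is stated for general $h < H$, so this is legitimate), and that all the logarithmic and polynomial factors are combined correctly under the substitution. A secondary subtlety is confirming that the exponent $1 - 1/s$ in $H/h$ behaves as expected — in particular for $s \le 1$ this exponent is nonpositive, so $H/h \ge 1$ and the factor genuinely contributes to the growth of $M$; since the corollary states the bound as an upper estimate (with $\lesssim$), no further case distinction is needed. Finally, one should note in passing that the constants hidden in $\lesssim$ depend on $f$, $\alpha$, $\beta$, and $d$, consistent with the conventions fixed in the introduction.
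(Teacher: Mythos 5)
Your proposal is correct and follows essentially the same route the paper intends: fix $h \approx H^{1/s}\varepsilon$, combine Lemma~\ref{lem:fineError} with the hypothesis $\|u\|_{H^{1+s}(D)} = \mathcal{O}(\eps^{-s})$ and the PG-LOD estimate~\eqref{eq:errorPGLOD} (with $\ell \approx |\log(H)|$) to get $\|u-\uPG\|_{L^2(D)} \lesssim H$, then invoke Theorem~\ref{th:error} and the triangle inequality, and substitute $h \approx H^{1/s}\varepsilon$ into the bounds~\eqref{eq:boundsNetwork}. The bookkeeping in your substitution ($|\log(h)| \lesssim s^{-1}|\log(H)| + |\log(\varepsilon)|$, $h^{-2} \approx H^{-2/s}\varepsilon^{-2}$, $H/h \approx H^{1-1/s}\varepsilon^{-1}$) is exactly what produces the stated depth and size estimates.
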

%
%
%=======================================================================================
%=========  Conclusion and Outlook
%=======================================================================================
\section{Conclusion and Outlook}\label{sec: conclusion}
In this paper, we have theoretically investigated the approximation properties of neural networks to approximate the coarse-scale contributions of local sub-problems in numerical homogenization. The computation of these local problems that are solved on a fine scale is the bottleneck when computing reliable coarse-scale surrogates in numerical homogenization. Therefore, replacing this process by a single trained neural network allows for a significant speedup, because the computation of an appropriate surrogate is reduced to simple forward passes through the network. We have focused on the Localized Orthogonal Decomposition method as a representative numerical homogenization method and presented upper bounds on the size of a network that leads to a total error between the deterministic discrete approximation and its variant based on the output surrogate of a trained neural network that scales linearly with respect to the coarse scale of interest. 

We emphasize that the presented results focus on general approximation properties and do not provide an answer to the important question regarding whether and how optimal approximating networks can be found through training from data, which might not be possible at all for certain tasks as recently shown in~\cite{ColAH21}. Nevertheless, our findings provide insight into how the dimensions of a suitable neural network for the approximation of the local sub-problems in numerical homogenization should be chosen if the target scale of interest and the oscillation scale of the coefficient are given. This serves as a first step in developing mathematically rigorous guidelines on how to design suitable neural network architectures for numerical homogenization tasks. Further investigations into this direction are subject to future research.

\section{Acknowledgments}
F.~Kr\"opfl and D.~Peterseim acknowledge funding by the Deutsche Forschungsgemeinschaft within the research project \emph{Autonomous research for exploring structure-property linkages and optimizing microstructures} (PE 2143/7-1).

\newcommand{\etalchar}[1]{$^{#1}$}

\appendix

\section{Proofs in Section \ref{sec: nn}} \label{app:nn}

\subsection{Proof of Theorem \ref{theo:lodnn}} \label{app:lodnn}
As mentioned at the beginning of Section \ref{sec:approxlocal}, we assume that $K\in\calT_H$ is an element that is at least $\ell+1$ layers of elements away from the boundary of $D$ and set $\omega=\Nb^\ell(K)$.
The idea of the proof is to start from the representation of the local PG-LOD matrix given in~\eqref{eq:SLOD}, namely 
\begin{equation}
\mathbf{S}_{\omega}^\mathrm{pg} = 
\mathbf{P}_\omega^T \mathbf{I}_{\omega}^T (\mathbf{I}_{\omega} \mathbf{S}^{-1} \mathbf{I}_{\omega}^T)^{-1}\mathbf{I}_\omega \mathbf{P}_{\omega,K}, 
\end{equation}
and decomposing the approximation of the mapping $\mathbf{A}_{\varepsilon,\omega} \mapsto \mathrm{vec}(\mathbf{S}_{\omega}^\mathrm{pg})$ into the following seven consecutive steps:
\begin{itemize}
\item[1)] Linear transformation $\mathbf{A}_{\varepsilon,\omega} \mapsto \mathrm{vec}(\mathbf{S})$, 
\item[2)] Inversion $\mathrm{vec}(\mathbf{S})\mapsto \mathrm{vec}(\mathbf{S}^{-1})$, 
\item[3)] Linear transformation $\mathrm{vec}(\mathbf{S}^{-1})\mapsto \mathrm{vec}(\mathbf{S}^{-1} \mathbf{I}_\omega^T) =:\mathrm{vec}(\mathbf{X})$,
\item[4)] Linear transformation $\mathrm{vec}(\mathbf{X})\mapsto \mathrm{vec}(\mathbf{I}_\omega \mathbf{X}) =:\mathrm{vec}(\mathbf{Y})$,
\item[5)] Inversion $\mathrm{vec}(\mathbf{Y})\mapsto \mathrm{vec}(\mathbf{Y}^{-1})$,
\item[6)] Linear transformation $\mathrm{vec}(\mathbf{Y}^{-1})\mapsto \mathrm{vec}(\mathbf{Y}^{-1} \mathbf{I}_\omega \mathbf{P}_{\omega,K})=:\mathrm{vec}(\mathbf{Z})$,
\item[7)] Linear transformation $\mathrm{vec}(\mathbf{Z}) \mapsto \mathrm{vec}(\mathbf{P}_\omega^T \mathbf{I}_\omega^T \mathbf{Z}).$
\end{itemize}
Note that in fact steps 1, 3, 4, 6, and 7 can be implemented exactly by single-layer neural networks. The matrix inversions in Steps 2 and 5, however, have to be approximated up to some tolerance.
Our goal is therefore to construct neural networks $\Psi^1, \dots, \Psi^7$, which implement and approximate these steps. Connecting those building blocks in series then yields a network $\Psi^{\mathrm{pg}}_\eta$ with the desired properties.\\

\emph{Step 1:} The key insight in the first step is that the mapping $\mathbf{A}_{\varepsilon,\omega}\mapsto \mathrm{vec}(\mathbf{S})$ can be written as a linear transformation of the input vector $\mathbf{A}_{\varepsilon,\omega}\mapsto \mathbf{U} \mathbf{A}_{\varepsilon,\omega}.$ Based on this observation, the single-layer network 
\begin{equation*}
\Psi^1 := \big((\mathbf{U}, \mathbf{0}_{n_\ell^2})\big) 
\end{equation*}
with input dimension $m_\ell$ and output dimension $n_{\ell}^2$ exactly implements the desired map. 
Writing $\lambda_i,\, i=1,\dots, n_{\ell},$ for the classical nodal basis of $V_h(\omega),$ and identifying the index of entry $i\in\{1,\dots,n_\ell^2\}$ in $\mathrm{vec}(\mathbf{S})$ with the index pair $(k,l)\in \{1,\dots,n_\ell\} \times \{1,\dots,n_\ell\}$ of the corresponding entry in $\mathbf{S}$, the matrix $\mathbf{U}\in\mathbb{R}^{n_\ell^2 \times m_\ell}$ is given by
\begin{equation*}
\mathbf{U}_{i,j} = 
\int_{T_j} \nabla\lambda_{k}\cdot \nabla\lambda_{l} \, \dx,
\end{equation*}
where $T_j$ is the $j$-th element in $\calT_\varepsilon(\omega).$
The fact that $\norm{\mathbf{U}}_0 \leq 2^{3d-1}m_{\ell}(\varepsilon/h)^d$ by a rough estimation then yields 
\begin{itemize}
\item[(i)] $L(\Psi^1) = 1,$
\item[(ii)] $M(\Psi^1) \leq 2^{3d-1}m_{\ell}(\varepsilon/h)^d \lesssim ((\ell+1)H/h)^d \lesssim (\ell H/h)^d$. \\
\end{itemize}

\emph{Step 2:} To approximate the inversion $\mathrm{vec}(\mathbf{S})\mapsto \mathrm{vec}(\mathbf{S}^{-1}),$ we utilize Theorem \ref{theo:invnn} to construct a suitable network $\Psi^2$. In order to do so, however, we first have to rescale the input $\mathrm{vec}(\mathbf{S})$ with a value $\cresc$ in such a way that $\| \mathrm{\mathbf{Id}}_{n_{{\scalebox{.45}{$\ell$}}}^2} - \cresc\mathbf{S}\|_2 \leq 1-\delta$ for some $\delta\in(0,1)$. Since all coefficients under consideration are bounded from below and above by $\alpha$ and $\beta$, respectively, there exist optimal values $\Cinf, \Csup$ (dependent on $h$, $\alpha$, and $\beta$) such that
\begin{equation*}
0 < \Cinf \leq \inf_{\mathbf{v}\in\mathbb{R}^{n_{\scalebox{.45}{$\ell$}}}\setminus\{0\}} \frac{\mathbf{v}^T \mathbf{S} \mathbf{v}}{\mathbf{v}^T\mathbf{v}} \leq \sup_{\mathbf{v}\in\mathbb{R}^{n_{\scalebox{.45}{$\ell$}}}\setminus\{0\}} \frac{\mathbf{v}^T \mathbf{S} \mathbf{v}}{\mathbf{v}^T\mathbf{v}} \leq \Csup < \infty,
\end{equation*}
for any $A\in\mathfrak{A}$. That is, the spectrum of $\mathbf{S}$ is always contained in $[\Cinf, \Csup]$. Choosing 
\begin{equation}\label{eq:cresc}
\cresc \in (0, \Csup^{-1})
\end{equation} 
and setting $\delta:= \cresc \Cinf,$ the symmetry of $\mathbf{S}$ then implies
\begin{equation*}
\big\| \mathrm{\mathbf{Id}}_{n_{{\scalebox{.45}{$\ell$}}}^2} - \cresc\mathbf{S}\big\|_2 \leq \vert 1-\cresc \Cinf\vert = 1-\delta.
\end{equation*}
The step of rescaling the input corresponds to feeding it through the one-layer network $\big((\cresc\, \mathrm{\mathbf{Id}}_{n_{{\scalebox{.45}{$\ell$}}}^2}, \mathbf{0}_{n_{\scalebox{.45}{$\ell$}}^2})\big).$ Moreover, it can be easily seen that if $((\mathbf{W},\mathbf{b}))$ is the neural network that implements $\cresc\mathrm{vec}(\mathbf{S}),$ then $((-\mathbf{W},-\mathbf{b} + \mathrm{vec}(\mathrm{\mathbf{Id}}_{n_{\scalebox{.45}{$\ell$}}^2}))$ implements $\mathrm{vec}(\mathrm{\mathbf{Id}}_{n_{\scalebox{.45}{$\ell$}}^2} - \cresc\mathbf{S})$. For any $\theta\in (0,\eta)$, Theorem \ref{theo:invnn} then guarantees the existence of a neural network $\Psi_{\mathrm{inv}, \theta}^{1-\delta, n_{{\scalebox{.45}{$\ell$}}}^{2}}$ such that 
\begin{equation} \label{eq:step2}
\Big\|\, \left(\cresc\mathbf{S}\right)^{-1} - \mathrm{mat}\Big(R_\rho^{K_{n_{\scalebox{.45}{$\ell$}}}^{1-\delta}} \big(\Psi_{\mathrm{inv},\theta}^{1-\delta, n_{{\scalebox{.45}{$\ell$}}}^2}\big)\left(\mathrm{vec}(\mathrm{\mathbf{Id}}_{n_{\scalebox{.45}{$\ell$}}^2} - \cresc\mathbf{S})\right) \Big)\, \Big\|_2 \leq \theta.
\end{equation}
Lastly, after the approximate inversion is performed, one has to scale the output back to the original scaling to obtain an approximation to $\mathrm{vec}(\mathbf{S}^{-1})$ rather than $(1/\cresc)\mathrm{vec}(\mathbf{S}^{-1})$. This is again done with the one-layer network $\big((\cresc\, \mathrm{\mathbf{Id}}_{n_{{\scalebox{.45}{$\ell$}}}^2}, \mathbf{0}_{n_{{\scalebox{.45}{$\ell$}}}^2})\big).$ Taking those operations together, we define 
\begin{equation*}
\Psi^2 := \big((\cresc\, \mathrm{\mathbf{Id}}_{n_{{\scalebox{.45}{$\ell$}}}^2}, \mathbf{0}_{n_{\ell}^2})\big) \bullet \Psi_{\mathrm{inv}, \eta}^{1-\delta, n_{{\scalebox{.45}{$\ell$}}}^2} \odot \big((-\cresc\, \mathrm{\mathbf{Id}}_{n_{{\scalebox{.45}{$\ell$}}}^2},\mathrm{vec}(\mathrm{\mathbf{Id}}_{n_{{\scalebox{.45}{$\ell$}}}^2}))\big).
\end{equation*}
Combining Theorem \ref{theo:invnn} with Lemma \ref{lem:permnn} and Lemma \ref{lem:sparseccnn}, we obtain the following bounds for the inversion network $\Psi^2$, 
\begin{itemize}
\item[(i)] $L(\Psi^2) \lesssim \log(m(\theta,\delta)) \, \big(\log(1/\theta) +  \log(m(\theta,\delta)) + \log(n_{\ell})\big) + 1, $
\item[(ii)] $M(\Psi^2) \lesssim m(\theta,\delta) \log^2(m(\theta,\delta))n_{\ell}^3 \, \big(\log(1/\theta) +  \log(m(\theta,\delta)) + \log(n_{\ell})\big) + n_{\ell}^2.$ \\
\end{itemize}

\emph{Step 3:} Similarly to Step 1, multiplying the neural network approximation of $\mathbf{S}^{-1}$ with~$\mathbf{I}_\omega^T$ from the right can be implemented by a single-layer neural network. Since both~$\mathbf{S}^{-1}$ and its approximation are symmetric matrices (see Remark \ref{rem:symnn}), it holds for both matrices that $\mathbf{S}^{-1} \mathbf{I}_\omega^T = (\mathbf{I}_\omega \mathbf{S}^{-1})^T$ and therefore $\mathrm{vec}(\mathbf{S}^{-1} \mathbf{I}_\omega^T) = \mathbf{Q}\, \mathrm{vec}(\mathbf{I}_\omega \mathbf{S}^{-1})$ for a suitable permutation matrix $\mathbf{Q}$ that maps $\mathrm{vec}(\mathbf{M})$ to $\mathrm{vec}(\mathbf{M}^T)$ for an arbitrary matrix $\mathbf{M}\in\mathbb{R}^{N_{\ell}\times n_{\ell}}$. To implement the map $\mathrm{vec}(\mathbf{S}^{-1})\mapsto \mathrm{vec}(\mathbf{I}_\omega \mathbf{S}^{-1})$, we consider the parallelization of $n_{\ell}$ identical copies $\Psi^3_1, \dots ,\Psi^3_{n_{\ell}}$ of the single-layer network 
\begin{equation*}
\Psi_{\mathbf{I}_\omega} := \big((\mathbf{I}_\omega, \mathbf{0}_{N_\ell}) \big).
\end{equation*}
Concatenating the output of this parallelization with the one-layer network $((\mathbf{Q}, \mathbf{0}_{n_{\ell}\cdot N_{\ell}}))$, i.e., setting
\begin{equation*}
\Psi^3 := ((\mathbf{Q}, \mathbf{0}_{n_{\ell}\cdot N_{\ell}}))\bullet P(\Psi_1^3, \dots ,\Psi^3_{n_{\ell}}),
\end{equation*}
then exactly implements the desired transformation. If the interpolation operator $\calI_H$ given in \eqref{eq:IH} is used, the corresponding matrix $\mathbf{I}_\omega\in\mathbb{R}^{N_{\ell}\times n_{\ell}}$ is a sparse matrix that satisfies $\|\mathbf{I}_\omega \|_0 \leq N_{\ell}(2H/h+1)^d$. Moreover, since $\mathbf{Q}$ is a permutation matrix, it does not change the number of non-zero parameters when concatenated with the parallelization network due to Lemma \ref{lem:permnn}. With Lemma \ref{lem:parann}, we finally obtain the following complexity estimates,
\begin{itemize}
\item[(i)] $L(\Psi^3) = 1,$
\item[(ii)] $M(\Psi^3) = n_{\ell}N_{\ell}(2H/h+1)^d = (((2\ell+1)H/h - 1)(2\ell+2)(2H/h + 1))^d \\ \hspace*{1.2cm} \lesssim (\ell^2H/h (H/h + 1))^d\lesssim(\ell H/h)^{2d}$. \\
\end{itemize} 

\emph{Step 4:} Analogous to the previous step, we consider $N_{\ell}$ identical copies $\Psi^4_1, \dots ,\Psi^4_{N_{\ell}}$ of the linear transformation network
$\Psi_{\mathbf{I}_\omega} = \big((\mathbf{I}_\omega, \mathbf{0}_{N_\ell}) \big)$ 
that implements multiplication of an input vector with $\mathbf{I}_\omega$. Then define the parallelization
\begin{equation*}
\Psi^4 := P(\Psi_1^4, \dots ,\Psi^4_{N_{\ell}}),
\end{equation*}
which implements the desired transformation exactly. Note that in this step, no permutation of the output of the parallelization is necessary. We obtain the same bounds as in the previous step, i.e.,
\begin{itemize}
\item[(i)] $L(\Psi^4) = 1,$
\item[(ii)] $M(\Psi^4) \lesssim (((2\ell+1)H/h - 1)(2\ell+2)(2H/h + 1))^d \lesssim (\ell H/h)^{2d}.$ \\
\end{itemize}

\emph{Step 5:} This step is similar to Step 2 and utilizes again Theorem \ref{theo:invnn} to approximate the inversion of $\mathbf{Y} := \mathbf{I}_\omega \mathbf{S}^{-1} \mathbf{I}_\omega^T \in \mathbb{R}^{N_{\ell}\times N_{\ell}}$. However, we have to consider that at this stage the approximation 
\begin{equation*}
\widehat{\mathbf{Y}}:= \mathbf{I}_\omega \cresc\, \mathrm{mat}\Big(R_\rho^{K_{n_{{\scalebox{.45}{$\ell$}}}}^{1-\delta}} \big(\Psi_{\mathrm{inv},\theta}^{1-\delta, n_{{\scalebox{.45}{$\ell$}}}^2}\big)(\mathrm{vec}(\mathrm{\mathbf{Id}}_{n_{{\scalebox{.45}{$\ell$}}}^2} - \cresc\mathbf{S}))\Big) \mathbf{I}_{\omega}^T
\end{equation*}
instead of the true matrix $\mathbf{Y}$ will be given as the input to the network to be constructed due to the inexact inversion of Step 2 above. Note that $\mathbf{Y}$ is a symmetric and positive definite matrix (cf.~also the proof of Corollary \ref{cor:network}) and observe that under the additional condition
\begin{equation*}
\theta<\min\Bigg\{\lambda_{\mathrm{min}}(\mathbf{Y}), \frac{\lambda_{\mathrm{min}}(\mathbf{Y})}{2\,\cresc\|\mathbf{I}_\omega \|_2^2}\Bigg\},
\end{equation*}
the same statement holds true for $\widehat{\mathbf{Y}}$, since
\begin{equation*}
\vert\lambda_{\mathrm{min}}(\mathbf{Y})-\lambda_{\mathrm{min}}(\widehat{\mathbf{Y}})\vert \leq \|\mathbf{Y}-\widehat{\mathbf{Y}}\|_2 \leq \cresc \|\mathbf{I}_\omega \|^2_2\, \theta < 0.5\,\lambda_{\mathrm{min}}(\mathbf{Y}) 
\end{equation*}
and therefore 
\begin{equation}\label{eq:eigBound1}
\lambda_{\mathrm{min}}(\widehat{\mathbf{Y}}) = \lambda_{\mathrm{min}}(\mathbf{Y}) - \big(\lambda_{\mathrm{min}}(\mathbf{Y})-\lambda_{\mathrm{min}}(\widehat{\mathbf{Y}})\big) > 0.5\,\lambda_{\mathrm{min}}(\mathbf{Y}).
\end{equation}
Further, we have 
\begin{equation}\label{eq:eigBound2}
\lambda_{\mathrm{max}}(\widehat{\mathbf{Y}}) = \|\widehat{\mathbf{Y}}\|_2 \leq \|\mathbf{Y}\|_2 + \|\mathbf{Y} - \widehat{\mathbf{Y}}\|_2 \leq \lambda_{\mathrm{max}}(\mathbf{Y}) + 0.5\, \lambda_{\mathrm{min}}(\mathbf{Y})   < 2\,\lambda_{\mathrm{max}}(\mathbf{Y}).
\end{equation}
This implies the existence of optimal values $\hCinf,\,\hCsup$ (dependent on $H$, $h$, $\alpha$, and $\beta$) such that
\begin{equation}\label{eq:specY}
0 < \hCinf \leq \inf_{\mathbf{v}\in\mathbb{R}^{N_{{\scalebox{.5}{$\ell$}}}}\setminus \{0 \}} \frac{\mathbf{v}^T \widehat{\mathbf{Y}} \mathbf{v}}{\mathbf{v}^T\mathbf{v}} \leq \sup_{\mathbf{v}\in\mathbb{R}^{N_{{\scalebox{.5}{$\ell$}}}}\setminus \{0 \}} \frac{\mathbf{v}^T \widehat{\mathbf{Y}} \mathbf{v}}{\mathbf{v}^T\mathbf{v}} \leq \hCsup < \infty
\end{equation}
and
\begin{equation}\label{eq:spechatY}
0 < \hCinf \leq \inf_{\mathbf{v}\in\mathbb{R}^{N_{{\scalebox{.5}{$\ell$}}}}\setminus \{0 \}} \frac{\mathbf{v}^T \mathbf{Y} \mathbf{v}}{\mathbf{v}^T\mathbf{v}} \leq \sup_{\mathbf{v}\in\mathbb{R}^{N_{{\scalebox{.5}{$\ell$}}}}\setminus \{0 \}} \frac{\mathbf{v}^T \mathbf{Y} \mathbf{v}}{\mathbf{v}^T\mathbf{v}} \leq \hCsup < \infty.
\end{equation}
For a discussion on the scaling of the values $\hCinf,\,\hCsup$ and the assumptions on $\theta$ in terms of the mesh sizes $H$ and $h$, we refer to the proof of Corollary~\ref{cor:network} below.
When the matrices $\mathbf{Y}$ and $\widehat{\mathbf{Y}}$ are rescaled with $\hcresc\in(0,\hCsup^{-1})$, it holds that
\begin{equation*}
\| \mathrm{\mathbf{Id}}_{N_{{\scalebox{.5}{$\ell$}}}^2} - \hcresc\mathbf{Y}\|_2 \leq 1-\widehat{\delta},
\quad \mathrm{as\, well\, as} \quad
\| \mathrm{\mathbf{Id}}_{N_{{\scalebox{.5}{$\ell$}}}^2} - \hcresc\widehat{\mathbf{Y}}\|_2 \leq 1-\widehat{\delta},
\end{equation*}
with $\widehat{\delta} : = \hcresc \hCinf$. The rescaling is implemented by the network $\big((\hcresc\, \mathrm{\mathbf{Id}}_{N_{{\scalebox{.5}{$\ell$}}}^2}, \mathbf{0}_{N_{{\scalebox{.5}{$\ell$}}}^2})\big).$ Moreover, changing this network to $\big((-\hcresc\, \mathrm{\mathbf{Id}}_{N_{{\scalebox{.5}{$\ell$}}}^2}, \mathrm{vec}(\mathrm{\mathbf{Id}}_{N_{{\scalebox{.5}{$\ell$}}}^2}))\big)$ by switching signs of the weight matrix and adding a vectorized identity matrix as a bias term leads to the implementation of $(\mathrm{\mathbf{Id}}_{N_{{\scalebox{.5}{$\ell$}}}^2} - \hcresc\widehat{\mathbf{Y}})$ if $\mathrm{vec}(\widehat{\mathbf{Y}})$ is fed through this network. Theorem \ref{theo:invnn} yields the existence of a neural network $\Psi_{\mathrm{inv}, \gamma}^{1-\widehat{\delta}, N_{\ell}^2}$ such that 
\begin{equation}\label{eq:step5}
\Big\|\, \big(\hcresc\widehat{\mathbf{Y}}\big)^{-1} - \mathrm{mat}\Big(R_\rho^{K_{N_{{\scalebox{.5}{$\ell$}}}}^{1-\widehat{\delta}} } \big(\Psi_{\mathrm{inv},\gamma}^{1-\widehat{\delta}, N_{{\scalebox{.5}{$\ell$}}}^2 }\big)(\mathrm{vec}(\mathrm{\mathbf{Id}}_{N_{{\scalebox{.5}{$\ell$}}}^2} - \hcresc\widehat{\mathbf{Y}})) \Big)\, \Big\|_2 \leq \gamma.
\end{equation}
Note that this in turn implies that
\begin{equation}\label{eq:step5.1}
%\small
\Big\|\, \left(\hcresc\mathbf{Y}\right)^{-1} - \mathrm{mat}\Big(R_\rho^{K_{N_{{\scalebox{.5}{$\ell$}}}}^{1-\widehat{\delta}} } \big(\Psi_{\mathrm{inv},\eta}^{1-\widehat{\delta}, N_{{\scalebox{.5}{$\ell$}}}^2 }\big)\big(\mathrm{vec}(\mathrm{\mathbf{Id}}_{N_{{\scalebox{.5}{$\ell$}}}^2} - \hcresc\widehat{\mathbf{Y}})\big) \Big)\, \Big\|_2 
\leq \frac{\cresc\norm{\mathbf{I}_\omega}_2^2}{\hcresc\hCinf^2}\, \theta + \gamma.
\end{equation}
Indeed, it holds by the triangle inequality and \eqref{eq:step5} that
\begin{align*}
\Big\|\, \big(\hcresc&\mathbf{Y}\big)^{-1} - \mathrm{mat}\Big(R_\rho^{K_{N_{{\scalebox{.5}{$\ell$}}}}^{1-\widehat{\delta}} } \big(\Psi_{\mathrm{inv},\eta}^{1-\widehat{\delta}, N_{{\scalebox{.5}{$\ell$}}}^2 }\big)(\mathrm{vec}(\mathrm{\mathbf{Id}}_{N_{{\scalebox{.5}{$\ell$}}}^2} - \hcresc\widehat{\mathbf{Y}})) \Big)\, \Big\|_2  \\
\hspace*{1cm}&\leq \Big\|\, \big(\hcresc\mathbf{Y}\big)^{-1} - \big(\hcresc \mathbf{\widehat{Y}}\big)^{-1}  \Big\|_2 \\
&+ \Big\|\, \big(\hcresc\mathbf{\widehat{Y}}\big)^{-1} - \mathrm{mat}\Big(R_\rho^{K_{N_{{\scalebox{.5}{$\ell$}}}}^{1-\widehat{\delta}} } \big(\Psi_{\mathrm{inv},\eta}^{1-\widehat{\delta}, N_{{\scalebox{.5}{$\ell$}}}^2 }\big)(\mathrm{vec}(\mathrm{\mathbf{Id}}_{N_{{\scalebox{.5}{$\ell$}}}^2} - \hcresc\widehat{\mathbf{Y}})) \Big)\, \Big\|_2  =: (\star) + \gamma.
\end{align*}
Using $\norm{\mathbf{A}^{-1}-\mathbf{B}^{-1}}_2 \leq \norm{\mathbf{A}-\mathbf{B}}_2 \norm{\mathbf{A}^{-1}}_2 \norm{\mathbf{B}^{-1}}_2$, the term $(\star)$ can be estimated by
\begin{equation*}
\begin{aligned}
(\star) &\leq \left\|\, \hcresc\mathbf{Y} - \hcresc \mathbf{\widehat{Y}}  \right\|_2
\left\|\, \big(\hcresc\mathbf{Y}\big)^{-1} \right\|_2
\left\|\, \big(\hcresc \mathbf{\widehat{Y}}\big)^{-1}  \right\|_2 %\\&
\leq \frac{1}{\hcresc} \norm{\mathbf{I}_{\omega}}_2^2 \cresc\, \theta \, \frac{1}{\hCinf} \, \frac{1}{\hCinf},
\end{aligned}
\end{equation*}
where we have used \eqref{eq:step2} and the definitions as well as the spectral bounds given on $\mathbf{Y},\widehat{\mathbf{Y}}$ in \eqref{eq:specY}, \eqref{eq:spechatY}. This yields the inequality \eqref{eq:step5.1}.

Rescaling the output of $\Psi_{\mathrm{inv}, \gamma}^{1-\widehat{\delta}, N_{\ell}^2}$ again, we get a network with the desired approximation properties defined by
\begin{equation*}
\Psi^5 := \big((\hcresc\, \mathrm{\mathbf{Id}}_{N_{{\scalebox{.5}{$\ell$}}}^2}, \mathbf{0}_{N_{{\scalebox{.5}{$\ell$}}}^2})\big) \bullet \Psi_{\mathrm{inv}, \gamma}^{1-\widehat{\delta}, N_{{\scalebox{.5}{$\ell$}}}^2} \odot \big((-\hcresc\, \mathrm{\mathbf{Id}}_{N_{{\scalebox{.5}{$\ell$}}}^2},\mathrm{vec}(\mathrm{\mathbf{Id}}_{N_{{\scalebox{.5}{$\ell$}}}^2}))\big).
\end{equation*}
Further, we have the following bounds on depth and number of parameters, 
\begin{itemize}
\item[(i)] $L(\Psi^5) \lesssim \log(m(\gamma,\widehat{\delta})) \, \left(\log(1/\gamma) +  \log(m(\gamma,\widehat{\delta})) + \log(N_{\ell})\right) + 1, $
\item[(ii)] $M(\Psi^5) \lesssim m(\gamma,\widehat{\delta}) \log^2(m(\gamma,\widehat{\delta}))N_{\ell}^3 \, \left(\log(1/\gamma) +  \log(m(\gamma,\widehat{\delta})) + \log(N_{\ell})\right) + N_{\ell}^2.$ \\
\end{itemize}

\emph{Step 6:} The multiplication of $\mathbf{Y}^{-1},$ respectively $\widehat{\mathbf{Y}}^{-1}$, with $\mathbf{I}_\omega\mathbf{P}_{\omega,K}$ from the right is analogous to Step 3. Again, we have that both $\mathbf{Y}^{-1}$ and $\widehat{\mathbf{Y}}^{-1}$ are symmetric and thus the expression $\mathbf{Y}^{-1} \mathbf{I}_\omega\mathbf{P}_{\omega, K}$ can also be written as $((\mathbf{I}_\omega\mathbf{P}_{\omega, K})^T \mathbf{Y}^{-1})^T.$ With the same argument as in Step 3, the map $\mathrm{vec}(\mathbf{Y}^{-1}) \mapsto \mathrm{vec}(\mathbf{Y}^{-1}\mathbf{I}_\omega\mathbf{P}_{\omega,K})$ is thus exactly implemented by 
\begin{equation*}
\Psi^6 := ((\widetilde{\mathbf{Q}}, \mathbf{0}_{\, 2^d N_{{\scalebox{.5}{$\ell$}}}})) \bullet P(\Psi_1^6, \dots ,\Psi^6_{N_{{\scalebox{.5}{$\ell$}}}}),
\end{equation*}
where $\widetilde{\mathbf{Q}}$ is a suitable permutation matrix of dimension $ N_{\ell}2^d \times  N_{\ell} 2^d$ and $\Psi_1^6, \dots ,\Psi^6_{N_{{\scalebox{.5}{$\ell$}}}}$ are identical copies of 
\begin{equation*}
\Psi_{(\mathbf{I}_\omega\mathbf{P}_{\omega, K})^T} := \big((\mathbf{I}_\omega\mathbf{P}_{\omega, K})^T, \mathbf{0}_{2^d}) \big).
\end{equation*}
Since $\mathbf{I_\omega}$ is a quasi-interpolation operator on $\omega$ and $\mathbf{P}_{\omega, K}$ is the prolongation operator from the element $K$ to the whole patch, we roughly estimate $\|\mathbf{I_\omega}\mathbf{P}_{\omega, K} \|_0 \leq 6^d.$  Combining this with Lemma \ref{lem:parann} yields
\begin{itemize}
\item[(i)] $L(\Psi^6) = 1,$
\item[(ii)] $M(\Psi^6) = 6^d N_{\ell} = 6^d (2\ell+2)^d \lesssim \ell^d.$ \\
\end{itemize}

\emph{Step 7:} Analogous to Step 4, we consider the parallelization 
\begin{equation*}
\Psi^7 := P(\Psi_1^7, \dots ,\Psi^7_{N_{{\scalebox{.5}{$\ell$}}}}),
\end{equation*}
where $\Psi_1^6, \dots ,\Psi^6_{N_{{\scalebox{.5}{$\ell$}}}}$ are identical copies of 
\begin{equation*}
\Psi_{\mathbf{P}_{\omega}^T \mathbf{I}_\omega^T} := \big((\mathbf{P}_{\omega}^T \mathbf{I}_\omega^T, \mathbf{0}_{2^d}) \big).
\end{equation*}
Again, no permutation is necessary in this step. With our choice of the quasi-interpolation operator, the rough estimate $\|\mathbf{P}_\omega^T \mathbf{I} _\omega^T \|_0 \leq 3^d N_{\ell}$ holds, which leads to 
\begin{itemize}
\item[(i)] $L(\Psi^7) = 1,$
\item[(ii)] $M(\Psi^7) \leq 3^dN_{\ell}^2 = 3^d (2\ell+2)^{2d}\lesssim \ell^{2d}.$ \\
\end{itemize}

By connecting the individual networks $\Psi^1, \dots, \Psi^7$ in series by sparsely concatenating them, i.e., defining 
\begin{equation*}
\Psi^{\mathrm{pg}}_\eta := \Psi^7 \odot \dots \odot \Psi^1,
\end{equation*}
we finally obtain a network with the desired properties. In particular, we have
\begin{equation*}
\norm{\mathbf{S}_{\omega}^\mathrm{pg} - \mathrm{mat}(\calR^{\mathfrak{A}_{\varepsilon,\omega}}_\rho(\Psi^{\mathrm{pg}}_\eta(\mathbf{A}_{\varepsilon,\omega}))}_2 \leq \frac{\cresc\norm{\mathbf{P}_{\omega,K}}_2\norm{\mathbf{P}_{\omega}}_2\norm{\mathbf{I}_{\omega}}_2^4}{\hCinf^2}\, \theta + \norm{\mathbf{P}_{\omega,K}}_2\norm{\mathbf{P}_{\omega}}_2\norm{\mathbf{I}_{\omega}}_2^2 \gamma, 
\end{equation*}
since with the abbreviation
\begin{equation*}
\widehat{\mathrm{inv}}(\hcresc\widehat{\mathbf{Y}}) := 
\mathrm{mat}\Big(R_\rho^{K_{N_{{\scalebox{.5}{$\ell$}}}}^{1-\widehat{\delta}} } \big(\Psi_{\mathrm{inv},\eta}^{1-\widehat{\delta}, N_{\ell}^2 }\big)(\mathrm{vec}(\mathrm{\mathbf{Id}}_{N_{{\scalebox{.5}{$\ell$}}}^2} - \hcresc\widehat{\mathbf{Y}})) \Big),
\end{equation*}
it holds that
\begin{equation*}
\begin{aligned}
\Big\| \mathbf{S}_{\omega}^\mathrm{pg} &- \mathrm{mat}(\calR^{\mathfrak{A}_{\varepsilon,\omega}}_\rho(\Psi^{\mathrm{pg}}_\eta(\mathbf{A}_{\varepsilon,\omega}))\Big\|_2  \\ \hspace{1cm}
&= \left\|\, \mathbf{I}_\omega \mathbf{P}_\omega \hcresc\, \left(\hcresc\mathbf{Y}\right)^{-1} \mathbf{I}_\omega^T \mathbf{P}_{K,\omega} - \mathbf{I}_\omega \mathbf{P}_\omega \hcresc \widehat{\mathrm{inv}}(\hcresc\widehat{\mathbf{Y}}) \mathbf{I}_\omega^T \mathbf{P}_{K,\omega}\, \right\|_2
\\
&\leq \norm{\mathbf{P_\omega}}_2 \norm{\mathbf{P}_{K,\omega}}_2 \norm{\mathbf{I_\omega}}_2^2 \,\hcresc  \Big(\frac{\cresc\norm{\mathbf{I}_\omega}_2^2}{\hcresc\hCinf^2}\, \theta + \gamma\Big)
\end{aligned}
\end{equation*}
due to \eqref{eq:step5.1}. Choosing $\theta,\gamma$ such that 
\begin{equation*}
\frac{\cresc\norm{\mathbf{P}_{\omega,K}}_2\norm{\mathbf{P}_{\omega}}_2\norm{\mathbf{I}_{\omega}}_2^4}{\hCinf^2}\, \theta + \norm{\mathbf{P}_{\omega,K}}_2\norm{\mathbf{P}_{\omega}}_2\norm{\mathbf{I}_{\omega}}_2^2 \gamma \leq \eta
\end{equation*}
then yields the estimate. As above, we refer to the proof of Corollary \ref{cor:network} below for a discussion of the scaling of the relevant spectral norms in terms of the mesh sizes $H,h$. Using Lemma \ref{lem:sparseccnn}, we obtain the following bounds for the final network, 
\begin{itemize}
\item[(i)] $L(\Psi^\mathrm{pg}) \leq \sum_{i=1}^7 L(\Psi^i) \lesssim \log(m(\theta,\delta)) \, \big(\log(1/\theta) +  \log(m(\theta,\delta)) + \log(n_{\ell})\big) \\
\hspace*{4cm}+ \log(m(\gamma,\widehat{\delta})) \, \big(\log(1/\gamma) +  \log(m(\gamma,\widehat{\delta})) + \log(N_{\ell})\big) + 1 $,
\item[(ii)] $M(\Psi^\mathrm{pg}) \lesssim \sum_{i=1}^7 M(\Psi^i) \\
\hspace*{1.35cm} \lesssim m(\theta,\delta) \log^2(m(\theta,\delta))n_{\ell}^3 \, \big(\log(1/\theta) +  \log(m(\theta,\delta)) + \log(n_{\ell})\big)
\\ \hspace*{1.3cm} + m(\gamma,\widehat{\delta}) \log^2(m(\gamma,\widehat{\delta}))N_{\ell}^3 \big(\log(1/\gamma) +  \log(m(\gamma,\widehat{\delta})) + \log(N_{\ell})\big) + (\ell H/h)^{2d}$.
\end{itemize}
This is the assertion. \qed

\subsection{Proof of Corollary \ref{cor:network}}\label{app:network}
The claim follows directly from Theorem~\ref{theo:lodnn} with $\eta \approx H^k$ and the estimation of the quantities $\theta, \gamma, \delta, \widehat{\delta}$ in terms of the mesh sizes $H, h$. 
Starting with $\theta$ and $\gamma,$ we have the condition that
\begin{equation}\label{eq:boundTheta}
\theta<\min\Bigg\{H^k, \lambda_{\mathrm{min}}(\mathbf{I}_\omega \mathbf{S}^{-1}\mathbf{I}_\omega^T), \frac{\lambda_{\mathrm{min}}(\mathbf{I}_\omega \mathbf{S}^{-1}\mathbf{I}_\omega^T)}{2\,\cresc\|\mathbf{I}_\omega \|_2^2}\Bigg\},
\end{equation}
which requires an estimation of the smallest eigenvalue of $\mathbf{I}_\omega \mathbf{S}^{-1}\mathbf{I}_\omega^T.$ Observe that 
\begin{equation*}
\lambda_\mathrm{min}(\mathbf{I}_\omega \mathbf{S}^{-1}\mathbf{I}_\omega^T) \geq \lambda_\mathrm{min}(\mathbf{S}^{-1}) \lambda_\mathrm{min}(\mathbf{I}_\omega \mathbf{I}_\omega^T) = \frac{\lambda_\mathrm{min}(\mathbf{I}_\omega \mathbf{I}_\omega^T)}{\lambda_\mathrm{max}(\mathbf{S})} \gtrsim \frac{\lambda_\mathrm{min}(\mathbf{I}_\omega \mathbf{I}_\omega^T)}{h^{d-2}},
\end{equation*}
where we have used the well-known fact that $\lambda_\mathrm{max}(\mathbf{S}) \approx h^{d-2}$ as the maximal eigenvalue of a finite element stiffness matrix. In order to estimate the eigenvalues of $\mathbf{I}_\omega\mathbf{I}_\omega^T,$ we have to look into the practical computation of the matrix $\mathbf{I}_\omega$. It holds that 
\begin{equation*}
\mathbf{I}_\omega = \mathbf{R}_{\mathrm{c},\omega} \mathbf{E}_{\mathrm{c},\omega} (\mathbf{M}_{\mathrm{c},\omega}^\mathrm{dg})^{-1} (\mathbf{P}_\omega^\mathrm{dg})^T \mathbf{M}_\omega^\mathrm{dg} \mathbf{C}_\omega \mathbf{R}_\omega^T, 
\end{equation*}
where $\mathbf{E}_{\mathrm{c},\omega}$ is the algebraic realization of the averaging operator $E_H$ introduced in~\eqref{eq:IH}, $\mathbf{M}_{\mathrm{c},\omega}^\mathrm{dg}$ and $\mathbf{M}_\omega^\mathrm{dg}$ are the mass matrices corresponding to $Q1$ discontinuous finite element functions on $\mathcal{T}_H(\omega)$ and $\mathcal{T}_h(\omega)$, respectively, $\mathbf{P}_\omega^\mathrm{dg}$ is the prolongation map from $\mathcal{T}_H(\omega)$ to $\mathcal{T}_h(\omega)$ for $Q1$ discontinuous finite element functions, $\mathbf{C}_\omega$ maps the vector representation of a continuous finite element function to the vector of its discontinuous representation, and $\mathbf{R}_\omega$ is a restriction operator that removes all entries corresponding to boundary nodes of the patch $\omega$ on the fine mesh. The restriction operator $\mathbf{R}_{\mathrm{c},\omega}$ removes coarse boundary nodes on $\partial D$ only. Since $\mathbf{I}_\omega$ has full rank, it holds that
\begin{equation*}
\begin{aligned}
\lambda_\mathrm{min}(\mathbf{I}_\omega \mathbf{I}_\omega^T) \geq&\,  \lambda_\mathrm{min}(\mathbf{R}_{\mathrm{c},\omega} \mathbf{R}_{\mathrm{c},\omega}^T)\, \lambda_\mathrm{min}(\mathbf{E}_{\mathrm{c},\omega} \mathbf{E}_{\mathrm{c},\omega}^T)\, \lambda_\mathrm{min}((\mathbf{M}_{\mathrm{c},\omega}^\mathrm{dg})^{-2})\, \lambda_\mathrm{min}((\mathbf{P}_\omega^\mathrm{dg})^T \mathbf{P}_\omega^\mathrm{dg})\, \\  &\cdot\lambda_\mathrm{min}((\mathbf{M}_\omega^\mathrm{dg})^2)\, \lambda_\mathrm{min}(\mathbf{C}_\omega^T \mathbf{C}_\omega)\,  \lambda_\mathrm{min}(\mathbf{R}_\omega \mathbf{R}_\omega^T).
\end{aligned}
\end{equation*}
Using that $\lambda_\mathrm{min}(\mathbf{R}_{\mathrm{c},\omega} \mathbf{R}_{\mathrm{c},\omega}^T) = \lambda_\mathrm{min}(\mathbf{R}_\omega \mathbf{R}_\omega^T) = 1,\, \lambda_\mathrm{min}(\mathbf{E}_{\mathrm{c},\omega} \mathbf{E}_{\mathrm{c},\omega}^T) \gtrsim 2^{-d},\, \lambda_\mathrm{min}((\mathbf{M}_{\mathrm{c},\omega}^\mathrm{dg})^{-2}) \gtrsim H^{-2d},\, \lambda_\mathrm{min}((\mathbf{M}_\omega^\mathrm{dg})^2) \gtrsim h^{2d},\, \lambda_\mathrm{min}((\mathbf{P}_\omega^\mathrm{dg})^T \mathbf{P}_\omega^\mathrm{dg})\gtrsim (H/h)^d$ and $\lambda_\mathrm{min}(\mathbf{C}_\omega^T \mathbf{C}_\omega)=1,$ we obtain
\begin{equation*}
\lambda_\mathrm{min}(\mathbf{I}_\omega \mathbf{I}_\omega^T) \gtrsim 2^{-d} H^{-2d} h^{2d} (H/h)^d \gtrsim (h/H)^{d}. 
\end{equation*}
This, in turn, yields
\begin{equation*}
\lambda_\mathrm{min}(\mathbf{I}_\omega \mathbf{S}^{-1}\mathbf{I}_\omega^T) \gtrsim \frac{(h/H)^{d}}{h^{d-2}} \approx \frac{h^{2}}{H^{d}} 
\end{equation*}
With a similar argument for the maximal eigenvalue, we also get $\|\mathbf{I}_\omega\|_2^2 \lesssim (h/H)^{d}$. 
Further, we have
\begin{equation*}
\cresc < \frac{1}{\Csup} \leq  \frac{1}{\lambda_\mathrm{max}(\mathbf{S})} \approx h^{2-d}. 
\end{equation*}
Combining the above estimates, we arrive at
\begin{equation*}
\frac{\lambda_{\mathrm{min}}(\mathbf{I}_\omega \mathbf{S}^{-1}\mathbf{I}_\omega^T)}{\cresc\|\mathbf{I}_\omega \|_2^2} \gtrsim \frac{h^{2}/H^{d}}{h^{2-d}(h/H)^{d}} \approx 1.
\end{equation*}
Recall that according to Theorem~\ref{theo:lodnn}, the parameter $\gamma$ has to satisfy the condition
\begin{equation*}
\frac{\cresc\norm{\mathbf{P}_{\omega,K}}_2\norm{\mathbf{P}_{\omega}}_2\norm{\mathbf{I}_{\omega}}_2^4}{\hCinf}\, \theta + \norm{\mathbf{P}_{\omega,K}}_2\norm{\mathbf{P}_{\omega}}_2\norm{\mathbf{I}_{\omega}}_2^2 \gamma \leq H^k.
\end{equation*}
Since $\mathbf{P}_{\omega}$ and $\mathbf{P}_{\omega,K}$ realize prolongations, we have $\|\mathbf{P}_\omega\|_2 \lesssim (H/h)^{d/2}$ as well as $\|\mathbf{P}_{\omega,K}\|_2 \lesssim (H/h)^{d/2}$. With a suitable choice of $\theta$, i.e.
\begin{equation*}
\theta \approx \frac{h^{2}}{H} H^k \Bigg(1 + \frac{H^d}{h^2}\Bigg)^{-1},
\end{equation*}
the estimate~\eqref{eq:boundTheta} is fulfilled. Further, we have with~\eqref{eq:eigBound1} that
\begin{equation*}
\hCinf \approx \lambda_\mathrm{min}(\mathbf{I}_\omega \mathbf{S}^{-1}\mathbf{I}_\omega^T) \gtrsim \frac{h^{2}}{H^d}.
\end{equation*}
Choosing $\gamma \approx \theta$ thus leads to the rough estimate 
\begin{equation*}
\begin{aligned}
\frac{\cresc\norm{\mathbf{P}_{\omega,K}}_2\norm{\mathbf{P}_{\omega}}_2\norm{\mathbf{I}_{\omega}}_2^4}{\hCinf^2}\, &\theta + \norm{\mathbf{P}_{\omega,K}}_2\norm{\mathbf{P}_{\omega}}_2\norm{\mathbf{I}_{\omega}}_2^2 \gamma \\
& \lesssim \frac{h^{2-d} (H/h)^d (h/H)^{2d}}{(h^{4}/H^{2d})}\theta + (H/h)^{d} (h/H)^{d} \gamma \\
& \approx \Big(1 + \frac{H^d}{h^2}\Big)\, \theta %\\&
\lesssim \Big(1 + \frac{H^d}{h^2}\Big)\, \Big(1 + \frac{H^d}{h^2}\Big)^{-1}H^k = H^k.
\end{aligned}
\end{equation*}
In order to estimate the quantities $m(\theta,\delta)$ and $m(\gamma,\widehat{\delta}),$ we derive lower bounds on $\delta,\, \widehat{\delta}$, and $\theta$.
If $\cresc$ is chosen close to $\Csup^{-1}\,$, we have that 
\begin{equation*}
\delta = \cresc \Cinf \approx \frac{\Cinf}{\Csup} \approx \frac{\lambda_\mathrm{min}(\mathbf{S})}{\lambda_\mathrm{max}(\mathbf{S})} \approx \frac{h^d}{h^{d-2}} = h^{2},
\end{equation*}
using the fact that $\lambda_\mathrm{min}(\mathbf{S}) \approx h^d$. Analogously, for a choice of $\hcresc$ close to $\hCsup^{-1}$ we obtain 
\begin{equation*}
\widehat{\delta} \approx \frac{\hCinf}{\hCsup} \approx \frac{\lambda_\mathrm{min}(\mathbf{I}_\omega \mathbf{S}^{-1}\mathbf{I}_\omega^T)}{\lambda_\mathrm{max}(\mathbf{I}_\omega \mathbf{S}^{-1}\mathbf{I}_\omega^T)} \gtrsim \frac{{h^{2}}{H^{-d}}}{H^{-d}} \approx h^2,
\end{equation*}
where we have used that $\lambda_\mathrm{max}(\mathbf{I}_\omega \mathbf{S}^{-1} \mathbf{I}_\omega^T) \lesssim H^{-d}$, which can be derived analogously to the scaling of the smallest eigenvalue. For $\theta$ (and thus $\gamma$), a very rough estimation yields 
\begin{equation*}
\begin{aligned}
\theta &\approx \frac{h^{2}}{H} H^k \Bigg(1 + \frac{H^d}{h^2}\Bigg)^{-1} 
\approx H^{k-1}h^2 \frac{h^2}{h^2 +H^d} 
\gtrsim H^{k-1}h^4.
\end{aligned}
\end{equation*}
Therefore, we obtain
\begin{equation*}
m(\theta,\delta) = \left\lceil \frac{\log(0.5 \theta\delta)}{\log(1-\delta)} \right\rceil \lesssim \frac{\log(0.5 H^{k-1}h^6)}{\log(1-h^2)}\lesssim \frac{(k-1)|\log(H)| + 6|\log(h)|}{h^2}
\end{equation*}
and similarly
\begin{equation*}
m(\gamma,\widehat{\delta}) \lesssim \frac{(k-1)|\log(H)| + 6|\log(h)|}{h^2}.
\end{equation*}
Moreover, it holds $N_\ell < n_\ell \lesssim (\vert \log(H)\vert H/h)^d$ and thus
\begin{equation*} 
\log(n_\ell) \lesssim d\log(H\vert\log(H)\vert/h) \lesssim |\log(h)|. 
\end{equation*}
Inserting all these results into the bounds derived in Theorem~\ref{theo:invnn} and using without loss of generality that $\theta \leq \gamma$ and $\delta \leq \widehat \delta$, we obtain 
\begin{equation*}
\begin{aligned}
L(\Psi^\mathrm{pg}_{H^k}) &\lesssim \log(m(\theta,\delta))\Big(\log(1/\theta) + \log(m(\theta,\delta)) + \log(n_\ell) \Big) \\
&\lesssim \Big(|\log(h)| + |\log(k)|\Big) \Big(|\log(h)| + |\log(k)|  \Big) \\
&\lesssim |\log(h)|^2 + |\log(k)|^2
\end{aligned}
\end{equation*}
as well as
\begin{equation*}
\begin{aligned}
M(\Psi^\mathrm{pg}_{H^k}) &\lesssim m(\theta,\delta) \log^2(m(\theta,\delta))\,n_\ell^3\, \Big(\log(1/\theta) + \log(m(\theta,\delta)))+ \log(n_\ell) \Big) 
\\&\qquad+ (\vert \log(H)\vert H/h)^{2d}\\
&\lesssim h^{-2}\Big(k |\log(H)| + |\log(h)|\Big)\Big(|\log(h)|^3 + |\log(k)|^3\Big) \big(\vert \log(H)\vert H/h\big)^{3d}. %\\&\qquad+ (\vert \log(H)\vert H/h)^{2d}.
\end{aligned}
\end{equation*}
This is the assertion. \qed

\section{Proofs in Section \ref{sec: diffSol}} \label{app:diffSol}
\subsection{Proof of Lemma \ref{lem:CGvsPG}} \label{app:CGvsPG}
Taking the difference of~\eqref{eq:discvarproblemclassic} and~\eqref{eq:discvarproblempg} leads to
\begin{equation*}
a((\mathsf{id}-\calQ^\ell)(\uCG-\uPG),(\mathsf{id}-\calQ^\ell)v_H) = a((\mathsf{id}-\calQ^\ell)\uPG,\calQ^\ell v_H) 
\end{equation*}
for all $v_H \in \VH$. Therefore, with $v_H = (\uCG-\uPG)$, we have
\begin{align*}
a((\mathsf{id}-\calQ^\ell&)(\uCG-\uPG),(\mathsf{id}-\calQ^\ell)(\uCG-\uPG)) \\& = a((\mathsf{id}-\calQ^\ell)\uPG,\calQ^\ell(\uCG-\uPG)) \\
& = \underbrace{a((\mathsf{id}-\calQ)\uPG,\calQ^\ell(\uCG-\uPG))}_{=\,0} + a((\calQ-\calQ^\ell)\uPG,\calQ^\ell(\uCG-\uPG))\\
& \lesssim \exp(-c\ell)\|\nabla \uPG\|_{L^2(D)}\,\|(\mathsf{id}-\calQ^\ell)(\uCG-\uPG)\|_{L^2(D)}
\end{align*}
using~\eqref{eq:loc} in the last step. 
This leads to
\begin{equation*}
\|\nabla (\mathsf{id}-\calQ^\ell)(\uCG-\uPG)\|_{L^2(D)} \lesssim \exp(-c\ell)\|\nabla \uPG\|_{L^2(D)}
\end{equation*}
and, with the Friedrichs inequality, interpolation estimates, and the stability of the solution~$\uPG$, we get
\begin{equation*}
\|\uCG- \uPG\|_{L^2(D)} \lesssim \exp(-c\ell).
\end{equation*}
Further, we have 
\begin{equation*}
\|\mathbf{u}^\mathrm{c} - \textbf{u}^\mathrm{pg}\|_2 \leq C H^{-d/2}\|\uCG- \uPG\|_{L^2(D)} \lesssim H^{-d/2}\exp(-c\ell),
\end{equation*}
which is the assertion. \qed

\subsection{Proof of Lemma~\ref{lem:boundR}}\label{app:boundR}
Due to the definition of~\eqref{eq:discvarproblemclassic} (which defines $\mathbf{S}^\mathrm{c}$), we have for any vector $\mathbf{v}$ with corresponding function $v\in \VH$ that
\begin{equation}\label{eq:proofEV1}
\begin{aligned}
\mathbf{v}^T \mathbf{S}^\mathrm{c} \mathbf{v} &= a((\mathsf{id}-\calQ^\ell)v,(\mathsf{id}-\calQ^\ell)v) \geq \alpha \|\nabla(\mathsf{id}-\calQ^\ell)v\|_{L^2(D)}^2 \\
&\geq \alpha C^{-2}\|\nabla v\|_{L^2(D)}^2 
\end{aligned}
\end{equation}
using the interpolation estimate~\eqref{eq:IHprop} and $\IH(\mathsf{id}-\calQ^\ell)v = v$. With the Poincar\'e--Friedrichs inequality with constant $C_\mathrm{P}$ and the estimate~\eqref{eq:normEquiv}, we further get
\begin{equation}\label{eq:proofEV2} 
\mathbf{v}^T \mathbf{S}^\mathrm{c} \mathbf{v}
\geq \alpha C^{-2}C_\mathrm{P}^{-2}\|v\|_{L^2(D)}^2 
= \alpha C^{-2}C_\mathrm{P}^{-2}\mathbf{v}^T\mathbf{M}\mathbf{v} \geq cH^{d} \|\mathbf{v}\|_2^2.
\end{equation}
The claim follows since the minimal eigenvalue is bounded from above by the Rayleigh quotient. \qed

\subsection{Proof of Theorem \ref{th:error}}\label{app:error}
We only show the first estimate. The second one follows directly from the equivalence 
\begin{equation}\label{eq:equiv}
H^{d/2} \|\mathbf{u}^\mathrm{pg} - \mathbf{u}^\mathrm{nn}\|_2 \lesssim \|\uPG - \uNN\|_{L^2(D)} \lesssim H^{d/2} \|\mathbf{u}^\mathrm{pg} - \mathbf{u}^\mathrm{nn}\|_2,
\end{equation}
which follows from~\eqref{eq:normEquiv}.
With Lemma~\ref{lem:CGvsPG}, we have
\begin{equation}\label{eq:proof1}
\| \mathbf{u}^\mathrm{pg} - \mathbf{u}^\mathrm{nn}\|_2 \leq \| \mathbf{u}^\mathrm{pg} - \mathbf{u}^\mathrm{c}\|_2 + \| \mathbf{u}^\mathrm{c} - \mathbf{u}^\mathrm{nn}\|_2 \lesssim H^{-d/2}e^{-c_\mathrm{loc}\ell} + \| \mathbf{u}^\mathrm{c} - \mathbf{u}^\mathrm{nn}\|_2.
\end{equation}
We now bound the second term. 
Note that the right-hand side involving $f$ is deterministic and thus equal for the discrete problems~\eqref{eq:discvarproblempg}, \eqref{eq:discvarproblemclassic}, and the system based on the neural network. Denoting the vector-version of the right-hand side with~$\mathbf{f}$, we particularly have the linear systems
\begin{equation}\label{eq:linSys}
\mathbf{S}^\mathrm{pg} \mathbf{u}^\mathrm{pg} = \mathbf{f},\qquad \mathbf{S}^\mathrm{c} \mathbf{u}^\mathrm{c} = \mathbf{f},\qquad \mathbf{S}^\mathrm{nn} \mathbf{u}^\mathrm{nn} = \mathbf{f},
\end{equation} 
where $\mathbf{S}^\mathrm{pg}$ is the PG-LOD stiffness matrix, $\mathbf{S}^\mathrm{c}$ the (symmetric) C-LOD stiffness matrix, and $\mathbf{S}^\mathrm{nn}$ the one assembled from the neural network.
Using~\eqref{eq:linSys}, we estimate
\begin{equation}\label{eq:proof2}
\begin{aligned}
\| \mathbf{u}^\mathrm{c} - \mathbf{u}^\mathrm{nn}\|_2 &= \big\| \big(\mathbf{S}^\mathrm{c}\big)^{-1}\mathbf{f} - \mathbf{u}^\mathrm{nn}\big\|_2 = \big\| \big(\mathbf{S}^\mathrm{c}\big)^{-1}(\mathbf{S}^\mathrm{nn}\mathbf{u}^\mathrm{nn}  - \mathbf{S}^\mathrm{c}\mathbf{u}^\mathrm{nn})\big\|_2\\
& \leq \big\| \big(\mathbf{S}^\mathrm{c}\big)^{-1}\big\|_2\, \big\|\mathbf{S}^\mathrm{nn}  - \mathbf{S}^\mathrm{c}\big\|_2\, \big\|\mathbf{u}^\mathrm{nn}\big\|_2.
\end{aligned}
\end{equation}
With the eigenvalue bound on $\mathbf{S}^\mathrm{c}$ that is proven in Lemma~\ref{lem:boundR}, we obtain 
\begin{equation}\label{eq:invMatrix}
\big\|\big(\mathbf{S}^\mathrm{c}\big)^{-1}\big\|_2 = \big(\lambda_\mathrm{min}(\mathbf{S}^\mathrm{c})\big)^{-1} \lesssim H^{-d}. 
\end{equation}
For the second factor on the right-hand side of~\eqref{eq:proof2}, we estimate
\begin{align*}
\|\mathbf{S}^\mathrm{nn}  - \mathbf{S}^\mathrm{c}\|_2 \leq \|\mathbf{S}^\mathrm{nn}  - \mathbf{S}^\mathrm{pg}\|_2 + \|\mathbf{S}^\mathrm{pg}  - \mathbf{S}^\mathrm{c}\|_2.
\end{align*}
With the choice $k = 2d + 1$ in Corollary~\ref{cor:network} and~\eqref{eq:decS}-\eqref{eq:Theta}, we obtain the bounds in~\eqref{eq:boundsNetwork} as well as
\begin{equation}\label{eq:NNvsPG}
\begin{aligned}
\|\mathbf{S}^\mathrm{nn}  - \mathbf{S}^\mathrm{pg}\|_2 &\leq \sum_{K \in \calT_H} \big\|\Phi_K\big(\mathbf{\Theta}_K - \mathbf{S}_{\Nb^\ell(K)}^\mathrm{pg}\big)\big\|_2 \\&
\leq \sum_{K \in \calT_H} \big\|\mathbf{\Theta}_K - \mathbf{S}_{\Nb^\ell(K)}^\mathrm{pg}\big\|_2%\\&
\lesssim H^{-d} H^k \lesssim H^{d + 1}.
\end{aligned}
\end{equation}
The difference of the C-LOD and the PG-LOD stiffness matrices can be bounded as follows. Let $\mathbf{v}$ be a vector (with corresponding function $v \in \VH$) such that
\begin{equation*}
\|\mathbf{S}^\mathrm{pg}  - \mathbf{S}^\mathrm{c}\|_2 = |\lambda_{\mathrm{max}}(\mathbf{S}^\mathrm{pg}  - \mathbf{S}^\mathrm{c})| = \frac{\big|\mathbf{v}^T (\mathbf{S}^\mathrm{pg}  - \mathbf{S}^\mathrm{c}) \mathbf{v}\big|}{\mathbf{v}^T\mathbf{v}}.
\end{equation*}
With the definition of $\mathbf{S}^\mathrm{pg}$ and $ \mathbf{S}^\mathrm{c}$ corresponding to the discrete problems~\eqref{eq:discvarproblempg} and \eqref{eq:discvarproblemclassic}, respectively, we thus have
\begin{align*}
\|\mathbf{S}^\mathrm{pg}  - \mathbf{S}^\mathrm{c}\|_2 & = \frac{|a((\mathsf{id} - \calQ^\ell)v,v) - a((\mathsf{id} - \calQ^\ell)v,(\mathsf{id} - \calQ^\ell)v)|}{\mathbf{v}^T\mathbf{v}} \\&
= \frac{|a((\mathsf{id} - \calQ^\ell)v,\calQ^\ell v)|}{\mathbf{v}^T\mathbf{v}} %\\&
\lesssim \frac{|a((\calQ - \calQ^\ell)v,\calQ^\ell v)|}{H^{-d}\|v\|_{L^2(D)}^2}
\end{align*}
using the definition of the globally defined correction~$\calQ := \calQ^\infty$ and~\eqref{eq:normEquiv}  in the last step. Employing the estimate~\eqref{eq:loc}, the stability of the correction operator $\calQ^\ell$, and a classical inverse inequality, we arrive at
\begin{equation}\label{eq:PGvsCG}
\begin{aligned}
\|\mathbf{S}^\mathrm{pg}  - \mathbf{S}^\mathrm{c}\|_2 & \lesssim \frac{|a((\calQ - \calQ^\ell)v,\calQ^\ell v)|}{H^{-d}\|v\|_{L^2(D)}^2} \lesssim \frac{\beta\,\|\nabla(\calQ - \calQ^\ell)v\|_{L^2(D)}\,\|\nabla\calQ^\ell v\|_{L^2(D)}}{H^{-d}\|v\|_{L^2(D)}^2} \\&
\lesssim \frac{e^{-c_\mathrm{loc} \ell}\,\|\nabla v\|_{L^2(D)}^2}{H^{-d+2}\|\nabla v\|_{L^2(D)}^2} 
\lesssim H^{d-2} e^{-c_\mathrm{loc} \ell}.
\end{aligned}
\end{equation}
For the last term on the right-hand side of~\eqref{eq:proof2}, we estimate
\begin{align*}
\|\mathbf{u}^\mathrm{nn}\|_2 &\leq \|\mathbf{u}^\mathrm{pg} - \mathbf{u}^\mathrm{nn}\|_2 + \|\mathbf{u}^\mathrm{pg}\|_2 \leq \|\mathbf{u}^\mathrm{pg} - \mathbf{u}^\mathrm{nn}\|_2 + CH^{-d/2}\|\uPG\|_{L^2(D)} \\&\leq \|\mathbf{u}^\mathrm{pg} - \mathbf{u}^\mathrm{nn}\|_2 + CH^{-d/2}\|f\|_{L^2(D)},
\end{align*}
where we employ the stability of the solution $\uPG$ and the Friedrichs inequality. 
We now go back to~\eqref{eq:proof1} and~\eqref{eq:proof2} and obtain
\begin{equation}\label{eq:PGvsNN}
\begin{aligned}
\| \mathbf{u}^\mathrm{pg} - \mathbf{u}^\mathrm{nn}\|_2 &\leq CH^{-d/2}e^{-c_\mathrm{loc}\ell} + \big\|\big(\mathbf{S}^\mathrm{c}\big)^{-1}\big\|_2\, \big\|\mathbf{S}^\mathrm{nn}  - \mathbf{S}^\mathrm{c}\big\|_2\, \big\|\mathbf{u}^\mathrm{nn}\big\|_2 \\
& \leq  C H^{-d/2}e^{-c_\mathrm{loc}\ell} + \big\| \big(\mathbf{S}^\mathrm{c}\big)^{-1}\big\|_2\, \big\|\mathbf{S}^\mathrm{nn}  - \mathbf{S}^\mathrm{c}\big\|_2\, CH^{-d/2}\|f\|_{L^2(D)} \\&\qquad\qquad\qquad\qquad+ \big\| \big(\mathbf{S}^\mathrm{c}\big)^{-1}\big\|_2\, \big\|\mathbf{S}^\mathrm{nn}  - \mathbf{S}^\mathrm{c}\big\|_2\, \big\|\mathbf{u}^\mathrm{pg} - \mathbf{u}^\mathrm{nn}\big\|_2\\
& \leq C H^{-d/2}e^{-c_\mathrm{loc}\ell} +CH^{1-d/2}\|f\|_{L^2(D)} + \frac12 \|\mathbf{u}^\mathrm{pg} - \mathbf{u}^\mathrm{nn}\|_2.
\end{aligned}
\end{equation}
In the last step, we have used 
that
\begin{equation*}
\big\|\big(\mathbf{S}^\mathrm{c}\big)^{-1}\big\|_2	\big\|\mathbf{S}^\mathrm{nn}  - \mathbf{S}^\mathrm{c}\big\|_2 \lesssim H^{-d} \big( H^{d+1} + H^{d-2} e^{-c_\mathrm{loc} \ell}\big) \lesssim H \leq  \frac{1}{2}
\end{equation*}
according to~\eqref{eq:invMatrix}, \eqref{eq:NNvsPG}, and \eqref{eq:PGvsCG} if $H$ is chosen small enough and $\ell \gtrsim |\log (H)|$ large enough with respect to the respective hidden constant. 
Absorbing the last term on the right-hand side of~\eqref{eq:PGvsNN} leads to
\begin{align*}
\| \mathbf{u}^\mathrm{pg} - \mathbf{u}^\mathrm{nn}\|_2 &\lesssim H^{-d/2}e^{-c_\mathrm{loc}\ell} + H^{1-d/2}\,\|f\|_{L^2(D)}.
\end{align*}
Omitting the dependence on $f$ and using again that $\ell \gtrsim |\log (H)|$, we finally get
\[
\| \mathbf{u}^\mathrm{pg} - \mathbf{u}^\mathrm{nn}\|_2 \lesssim H^{1-d/2}.
\]
Employing~\eqref{eq:equiv}, this directly leads to
\[
\|\uPG - \uNN\|_{L^2(D)} \lesssim H. \qedhere
\]

\end{document}